\documentclass[11pt,reqno,a4paper]{amsart}
\usepackage[margin=0.75in]{geometry}
\usepackage[usenames]{color}
\usepackage{amsmath,pdfsync,verbatim,graphicx,epstopdf,enumerate}
\usepackage[colorlinks=true]{hyperref}
\usepackage{cancel,mathtools}
\usepackage{mathrsfs}
\usepackage[framemethod=tikz]{mdframed}
\hypersetup{allcolors=blue}
\mathtoolsset{showonlyrefs}
\allowdisplaybreaks
\numberwithin{equation}{section}
\newcommand{\I}{\mathrm{i}}

\newcommand{\lb}{\left(}

\newcommand{\rb}{\right)}
\newcommand{\PD}{\partial}

\newcommand{\Beq}{\begin{equation}}
	\newcommand{\Eeq}{\end{equation}}
\newcommand{\beq}{\begin{equation*}}
	\newcommand{\eeq}{\end{equation*}}
\newcommand{\bal}{\begin{align}}
	\newcommand{\eal}{\end{align}}

\newcommand{\g}{\gamma}

\usepackage{mathtools}

\usepackage[notref,notcite]{}
\usepackage[normalem]{ulem}

\newcommand{\B}{\beta}
\newcommand{\bp}{\begin{prob}}
	\newcommand{\ep}{\end{prob}}
\newcommand{\bpr}{\begin{proof}}
	\newcommand{\epr}{\end{proof}}
\renewcommand{\o}{\omega}

\newcommand{\bel}[1]{\begin{equation}\label{#1}}
	\newcommand{\ee}{\end{equation}}

\newcommand{\NT}{\negthinspace}
\usepackage{bm}
\newtheorem{theorem}{Theorem}[section]
\newtheorem{corollary}[theorem]{Corollary}

\newtheorem{lemma}[theorem]{Lemma}
\newtheorem{proposition}[theorem]{Proposition}

\newtheorem{question}{Question}

\theoremstyle{definition}

\newtheorem{remark}[theorem]{Remark}

\newcommand{\Rn}{\mathbb{R}^n}

\newcommand{\D}{\mathrm{d}}
\newcommand{\Lc}{\mathcal{L}}
\newcommand{\Rb}{\mathbb{R}}
\newcommand{\Dc}{\mathcal{D}}
\newcommand{\Nc}{\mathcal{N}}
\newcommand{\A}{\alpha}

\newcommand{\Cb}{\mathbb{C}}

\newcommand{\ve}{\varepsilon}

\newcommand{\Bb}{\mathbb{B}}
\usepackage[usenames]{color}
\usepackage{amsmath,pdfsync,verbatim,graphicx,epstopdf,enumerate}

\newcommand{\wt}{\widetilde}

\newcommand{\Fc}{\mathcal{F}}

\newcommand{\Pc}{\mathcal{P}}

\newcommand{\Rc}{\mathcal{R}}

\newcommand{\Sb}{\mathbb{S}}
\newcommand{\Sn}{\mathbb{S}^{n-1}}

\renewcommand{\o}{\omega}

\title[Range characterization of spherical mean transform]{A simple range characterization for spherical mean transform in odd dimensions and its applications}
\author[Agrawal,~ Ambartsoumian, ~Krishnan,~ Singhal]{Divyansh Agrawal$^\ddagger$, Gaik Ambartsoumian$^\dagger$, Venkateswaran P. Krishnan$^\ast$ and Nisha Singhal$^\ast$}

\address{$^\ddagger$ Department of Mathematics and Statistics, University of Jyväskylä, Finland
\newline
E-mail:{\tt \ divyansh.d.agrawal@jyu.fi, agrdiv01@gmail.com}
\newline
Orcid:{\tt\ 0009-0003-5125-0640}}
\address {$^{\ast}$ Centre for Applicable Mathematics, Tata Institute of Fundamental Research, Bangalore, India
\newline
E-mail:{\tt\  vkrishnan@tifrbng.res.in, nisha2020@tifrbng.res.in}
\newline
Orcid:{\tt\ 0000-0002-3430-0920, 0009-0006-3005-1986}}
\address{$^\dagger$ Department of Mathematics, The University of Texas at Arlington, Texas, USA
\newline
E-mail:{\tt \ gambarts@uta.edu}
\newline
Orcid:{\tt\ 0000-0002-1462-9964}}

\begin{document}
\begin{abstract}
    This article provides a novel and simple range description for the spherical mean transform of functions supported in the unit ball of an odd dimensional Euclidean space. The new description comprises of a set of symmetry relations between the values of certain differential operators acting on the coefficients of the spherical harmonics expansion of the function in the range of the transform.  As a central part of the proof of our main result, we derive a remarkable cross product identity for the spherical Bessel functions of the first and second kind, which may be of independent interest in the theory of special functions. Finally, as one application of the range characterization, we construct an explicit counterexample proving that unique continuation type results cannot hold for the spherical mean transform in odd dimensional spaces.
\end{abstract}

\subjclass[2020]{44A12, 44A15, 44A20, 45Q05, 33C10}
\keywords{Spherical mean transform; Range characterization; Unique continuation; Bessel functions}

\maketitle

\section{Introduction}

The spherical mean transform (SMT), sometimes also called the spherical Radon transform, maps a function to its integrals over hyperspheres in $\Rn$. The study of this operator has a long history due to its relations to certain PDEs (wave equation, Euler-Poisson-Darboux equation) \cite{CH_Book, John-book, Rhee},  approximation theory and functional analysis \cite{agranovsky1996approximation, agranovsky1996injectivity}.
More recently, SMT and its inversion have been analyzed in connection with applications in tomography (see \cite{Kuchment-Kunyansky-TAT} and the references therein). 

The problem of determining a function from its averages over spheres is a formally over-determined problem and is usually studied in restricted settings, e.g., the centers are fixed on a hypersurface, or the radii are restricted \cite{Agranovsky-Kuchment-single_radius, Ambartsoumian2018, ref:AmbKuch, Cormack-Quinto}.  This article studies SMT of a function supported in the unit ball, and the centers of spheres of integration restricted to the boundary of the unit ball. In this setting, it is known that SMT is injective, and there are various formulas and algorithms for its inversion \cite{Ambartsoumian2018, Ambartsoumian-Zarrad-Lewis, ambartsoumian2015inversion,  ambartsoumian2014exterior, AER, aramyan2020recovering, Finch-Haltmeir-Rakesh_even-inversion, Finch-P-R, K, nguyen2009family, norton1980reconstruction, norton1981ultrasonic, R,  xu2002time}. An interesting feature of these inversion formulas is that they differ in odd and even dimensions and have local and non-local nature, respectively (see Section \ref{SRT-UCP}).  Recall that the solutions to the wave equation also show such features. For more detail, we refer the reader to the articles \cite{Finch-Rakesh-Survey, Kuchment-Kunyansky-TAT} and the references therein. 

In the context of investigating any generalized Radon transform and its inversion, it is desirable to have a description of the range of that operator. Such descriptions are valuable in analytical arguments dealing with various properties of these transforms. For example, the range characterization of the classical Radon transform in 2D was used to prove the non-uniqueness of the solution of the so-called interior problem in CT \cite{Natterer_book}. Furthermore, the range conditions (also often called data consistency conditions) can be useful in applications, since the measured (transform) data can be noisy or have missing parts, and the knowledge of the transform range may help with suppressing the noise or filling in the missing data (e.g. see \cite{anastasio2001comments, clarkson1999projections, mennessier1999attenuation, natterer1983exploiting, patch2004thermoacoustic}). Various range characterizations exist for SMT \cite{Agranovsky-Finch-Kuchment-range, Agranovsky-Kuchment-Quinto, AN, ref:AmbKuch-range, finch2006range, LVN}. However, the range conditions presented in the aforementioned articles are prohibitively complex to be used in constructive proofs, for example, when one needs to construct a function in the range of the transform with specified support constraints. In this work we derive a new characterization of the range of SMT in odd dimensions. Our range conditions are much simpler than those derived before, making them suitable for constructive proofs. Moreover, the proof of our main result produces a remarkable cross product identity for the spherical Bessel functions of the first and second kind. This identity may be of significant value as a standalone formula in the theory of special functions,  analogs of which we did not find in literature. Furthermore, it illuminates the structure of zeros of the Hankel transform of a function in the range of SMT, which was an essential and poorly understood component of the previously known range descriptions of SMT. Finally, as an application of our new range description, we use it to prove that the  unique continuation property (UCP) does not hold for the SMT in odd dimensions.  We also provide an alternative proof of the last statement without employing the range characterization.

The study of unique continuation property in the context of partial differential equations has a long and rich history \cite{Kotake-Narasimhan, Riesz, Tataru-survey}. More recently, UCP for integral transforms has attracted a lot of attention. Such results are possible in integral geometry due to their connections with non-local differential operators (for example, fractional powers of the Laplace operator). Unique continuation results for the X-ray transform of functions and vector-fields were studied in \cite{Keijo_partial_function, Keijo_partial_vector_field}, for tensor fields and for momentum transforms in \cite{Agrawal-Krishnan-Sahoo, ilmavirta2023unique} and for $d$-plane transforms in \cite{ucp-dplane,Covi-Monkkonen-Railo-UCP}. The unique continuation for $d$-plane transforms holds for odd $d$, i.e., when the surfaces of integration have odd dimensions. Our article proves that unique continuation does not hold when the surfaces of integration are spheres in an odd dimensional Euclidean space. Whether the UCP holds for SMT in even dimensions remains an open question. 

The rest of this article is organized as follows. In Section \ref{main-results}, we state our main results. We introduce relevant notation and give preliminaries in Section \ref{notations}. In particular, in Section \ref{BH} we recollect various formulas for Bessel functions and Hankel transforms used in the paper. In Section \ref{SRT}, we formally define the spherical mean transform and state a couple of known results about its inversion and range. In Section \ref{SRT-UCP}, we define the unique continuation property for SMT. Some basic mathematical results needed in the proofs are collected in Sections \ref{Egorychev} and \ref{aux-lemma}. Section \ref{proofs} is devoted to the proofs of the main theorems. In Section  \ref{range-Hankel}, we prove the range characterization for the SMT of radial functions. As part of the argument there, we derive an interesting and important cross product identity for Bessel functions of the first and second kind. Section \ref{sec:range-gen} deals with the range description of SMT in the general case. In Section  \ref{proof-CE} we construct a counterexample for UCP of SMT. Some combinatorial identities required for the proofs of the main results are given in Appendix A.

\subsection{Main results}\label{main-results} 
The main object of study in this paper is SMT. Informally,  as already mentioned above, SMT of a continuous function in $\Rn$ denotes the averages of the function over spheres with centers varying over $\Rn$ and positive radii. A formal dimension count gives that the SMT depends on $(n+1)$-variables, while the function itself depends on only $n$-variables. This, and certain applications in tomography, motivate restricting the centers of spheres to $(n-1)$-dimensional hypersurfaces, which makes the problem interesting as well as challenging.  

We will consider the case when the function is supported in $\Bb$ and the centers are fixed on $\Sn$. This can be easily generalized to balls and spheres of any radius by a simple scaling. For $f \in C_c^\infty (\Bb)$, the \textit{spherical mean transform} $\Rc$ is defined as 
\begin{equation*}
    \Rc f (p, t) = \frac{1}{\o_n} \int\limits_{\Sn} f(p+t\theta) \, \D S(\theta),
\end{equation*}
where $\o_n$ denotes the surface area of $\Sn$ and $\D S$ denotes the surface measure on it. Here we note that $p\in \Sb^{n-1}$ and $t\in (0,\infty)$. We caution the reader that some authors also define the above transform with weight $t^{n-1}$, in which case our results need to be modified accordingly. Due to the support restriction on $f$, $\Rc f (\cdot, t) = 0$ for $t \geq 2$. Thus, we have $\Rc: C_c^\infty(\Bb) \to C_c^\infty (\Sn \times (0,2))$.

Our first result gives a simple range characterization of $\Rc$ for radial functions.

\begin{theorem}[Range characterization for SMT of radial functions]\label{range}
    Let $\Bb$ denote the unit ball in $\mathbb{R}^n$ for an odd $n \geq 3$, and $k: = (n-3)/2$. A function $g \in C_c^\infty ((0,2))$ is representable as $g = \Rc f$ for a radial function $f \in C_c^\infty(\Bb)$ if and only if $h(t) \coloneqq t^{n-2} g(t)$ satisfies 
    \begin{align}\label{RC}
        [\Lc_k h](1-t) &= [\Lc_k h] (1+t), \quad \mbox{ for all } t \in [0,1],
    \end{align}
    where $\Lc_k$ is the linear differential operator of order $k$: 
   \Beq\label{RC2}
        \Lc_k = \sum\limits_{l=0}^k \frac{ (k+l)! }{(k-l)!\, l!\, 2^l} (1-t)^{k-l} D^{k-l}, \qquad D = \frac{1}{t} \frac{\D}{\D t},
    \Eeq
    and $[\Lc_k h](\cdot)$ denotes evaluation of the function $\Lc_k h$ at the given point.
\end{theorem}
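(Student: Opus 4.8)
The plan is to replace $\Rc$ on radial functions by an explicit one-dimensional integral and then expose the combinatorics built into $\Lc_k$. Parametrising the spheres of integration by their centre $p\in\partial\Bb=\Sn$ and radius $r$ and passing to polar coordinates, one gets for a radial $f(x)=f_0(|x|)$, after the substitution $\sigma=s^2$, the model
\[
 h(r)=r^{n-2}g(r)=c\int_{(1-r)^2}^{(1+r)^2}\Phi(\sigma)\,\bigl[(\sigma-(1-r)^2)\,((1+r)^2-\sigma)\bigr]^{k}\,\D\sigma ,
\]
where $\Phi$ is a fixed multiple of $f_0(\sqrt\sigma)$ and $f_0\leftrightarrow\Phi$ is a bijection between radial functions in $C_c^\infty(\Bb)$ and compactly supported $\Phi\in C^\infty([0,1))$. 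Since $\Phi$ is supported in $[0,1)$ and $(1+r)^2\ge1$, the factor $((1+r)^2-\sigma)^k$ never vanishes on the support, and expanding it as $\bigl(4r-(\sigma-(1-r)^2)\bigr)^k$ displays $h(r)$ as a polynomial of degree $k$ in $r$ whose coefficients are the derivatives $V^{(i)}(a)$, $0\le i\le k$, of the single function $V(a)=\int_a^1\Phi(\sigma)(\sigma-a)^{2k}\,\D\sigma$ evaluated at $a:=(1-r)^2$. The algebraic core of the necessity direction is then a binomial identity showing that $\Lc_k$ collapses every combination $\sum_i d_i\, r^i V^{(i)}((1-r)^2)$ to a function of $a$ alone; since $a=(1-r)^2$ is invariant under the reflection $r\mapsto2-r$, this instantly yields $[\Lc_k h](1-t)=[\Lc_k h](1+t)$.

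The coefficients $\frac{(k+l)!}{(k-l)!\,l!\,2^l}$ in $\Lc_k$ are exactly those appearing in the closed forms of the spherical Hankel functions $h^{(1)}_k,h^{(2)}_k=j_k\pm\I y_k$, and this is what makes $\Lc_k$ the right operator: rewriting the collapsing identity through $j_k=\tfrac12(h^{(1)}_k+h^{(2)}_k)$ together with Rayleigh's formula $j_k(z)=(-z)^k\bigl(\tfrac1z\tfrac{\D}{\D z}\bigr)^k\tfrac{\sin z}{z}$ is precisely what produces the cross product identity for $j_k$ and $y_k$ announced in the introduction, and this identity I would present as the conceptual hub of the proof. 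The same identity governs the ``structure of zeros'': using that the spherical-mean multiplier is proportional to $z^{-k}j_k(z)$, the modified Hankel transform $\widehat g(\lambda):=\int_0^2 g(r)\,j_k(\lambda r)\,r^{k+2}\,\D r$ of $g=\Rc f$ equals $c'\,j_k(\lambda)F(\lambda)$ with $F$ the Fourier transform of $f$, so by the Paley--Wiener theorem for the Hankel transform, together with the facts that $j_k$ has only real simple zeros and exponential type $1$, a given $g\in C_c^\infty((0,2))$ lies in the range of $\Rc$ on radials iff $\widehat g$ vanishes at every zero of $j_k$. I would then evaluate $\widehat g$ at a zero $\lambda_m$ of $j_k$: writing $\widehat g(\lambda_m)=\int_0^2 h(r)j_k(\lambda_m r)r^{1-k}\,\D r$, integrating by parts $k$ times against $j_k(\lambda r)=(-1)^k r^k\lambda^{-k}\,D^k\!\bigl(\tfrac{\sin(\lambda r)}{\lambda r}\bigr)$ (with $D$ as in \eqref{RC2}, all boundary terms killed by the flatness of $h$ at $0$ and $2$), and invoking the cross product identity in the degenerate form it takes when $j_k(\lambda_m)=0$ — there the $y_k$--contribution drops out, its coefficient being a multiple of $j_k(\lambda_m)$, as already visible for $k=0$ via $\sin2\lambda_m=-2\lambda_m^2 j_0(\lambda_m)y_0(\lambda_m)$ — one re-expresses $\widehat g(\lambda_m)$ as an integral of the antisymmetric part of $\Lc_k h$ against a family that is complete in the appropriate weighted $L^2((0,1))$; vanishing for every $m$ is then equivalent to \eqref{RC}, giving the converse.

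There is also a self-contained reconstruction proving the converse without special-function completeness. Using the elementary equivalence ``$\Psi$ is symmetric about $1$'' $\iff$ ``$\Psi$ is a $C^\infty$ function of $(1-r)^2$'', assume \eqref{RC} and write $(\Lc_k h)(r)=\Psi((1-r)^2)$; running the first paragraph's computation backwards, the collapsing identity becomes an ordinary differential equation expressing $\Psi$ through $V,V',\dots,V^{(2k)}$, which one solves for the $C^\infty$ branch $V(a)$, after which $\Phi(a)=-\tfrac1{(2k)!}V^{(2k+1)}(a)$ recovers $\Phi$ and hence a radial $f_0$ with $\Rc f=g$ by construction. It then remains to check that this $f$ is $C^\infty$ and supported strictly inside $\Bb$: smoothness is exactly the statement that the symmetry of $\Lc_k h$ excludes the singular ($a^{3/2}$--type) solution of that ODE at $a=0$, and the support constraint follows by tracking the lower endpoint of $\mathrm{supp}\,\Phi$ in terms of that of $\mathrm{supp}\,g$ and the reflection.

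I expect the main obstacle to be the cross product identity for $j_k$ and $y_k$ itself — equivalently, the binomial identity that makes $\Lc_k$ perform the collapse — and the extraction of its degenerate form at a zero of $j_k$, where the second-kind component must be shown to vanish. The remaining points (commuting $\Lc_k$ with the reflection $r\mapsto2-r$, verifying that all boundary terms in the integrations by parts vanish, the completeness and Paley--Wiener bookkeeping, and the regularity of the reconstructing ODE near $a=0$) should be routine but must be handled carefully near $r=0,2$ and near the endpoints of $\mathrm{supp}\,\Phi$.
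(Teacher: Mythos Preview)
Your main line is the paper's own proof. Both reduce the radial transform to the one-dimensional integral $h(r)=c\int_{(1-r)^2}^{1}\Phi(\sigma)\bigl[(\sigma-(1-r)^2)((1+r)^2-\sigma)\bigr]^k\,\D\sigma$ (your $\sigma$ is the paper's $u^2$), prove necessity via an algebraic identity for the kernel, and prove sufficiency by combining the cross product identity for $j_{k+1/2},y_{k+1/2}$ with the Agranovsky--Kuchment--Quinto characterisation (Theorem~\ref{range-AKQ}) that $g$ lies in the range iff its Hankel transform vanishes at the Bessel zeros. Your packaging of necessity---that $\Lc_k h$ is a function of $a=(1-r)^2$ alone---is a tidier restatement of the paper's cancellation identity (Proposition~\ref{NecessityProp}), which the paper establishes by Fa\`a di Bruno and a combinatorial induction on the coefficients $C(k,p)$; your substitution $\sigma=u^2$ and the auxiliary function $V(a)=\int_a^1\Phi(\sigma)(\sigma-a)^{2k}\,\D\sigma$ make the mechanism more visible but lead to the same binomial identity. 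You correctly flag the cross product identity as the hard step: the paper proves it (Theorem~\ref{Sec6:thm}) by a lengthy Egorychev-style contour-integral computation together with the Abel--Aigner identity, not by a short Rayleigh-formula manipulation. One remark: for sufficiency you do not need the completeness argument you sketch---the paper uses only the forward implication (symmetry $\Rightarrow$ cross product identity $\Rightarrow$ vanishing at Bessel zeros $\Rightarrow$ range, since $j_{k+1/2}$ and $y_{k+1/2}$ share no zeros), with necessity handled separately.

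Your alternative sufficiency route via direct ODE reconstruction is not in the paper and, as written, has a gap. Solving the $2k$-th order equation for $V$ from $\Psi=\Lc_k h$ only yields $\Lc_k(t^{n-2}\Rc f)=\Lc_k h$, not $\Rc f=g$; since $\Lc_k$ has leading coefficient $(1-r)^k$ vanishing at $r=1$, triviality of its kernel on $C_c^\infty((0,2))$ is not automatic and must be argued. The cleaner fix is to solve instead the $k$-th order relation $h(r)=\sum_j d_j\,r^j V^{(j)}((1-r)^2)$ directly on $(0,1)$---this genuinely gives $\Rc f=g$ there by construction---and then use the symmetry of $\Lc_k h$ to show that the same $V$ reproduces $h$ on $(1,2)$; the latter step is where the range condition actually does work, and it is not captured by ``by construction''.
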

\noindent Some special cases may be of particular interest which we highlight in the following remark. 
\begin{remark}
    In $\mathbb{R}^3$, $\Lc_0$ is the identity operator, therefore a function $g \in C_c^\infty ((0,2))$ is representable as $g = \Rc f$ for a radial function $f \in C_c^\infty(\Bb)$ if and only if $h(t) = t g(t)$ satisfies  
    \[
       h(1-t)=h(1+t),\mbox{ for all } t \in [0,1]. 
    \]     
    In $\mathbb{R}^5$, a function $g \in C_c^\infty ((0,2))$ is representable as $g = \Rc f$ for a radial function $f \in C_c^\infty(\Bb)$ if and only if $h(t)=t^3g(t)$ satisfies $[\Lc_1 h](1-t) = [\Lc_1 h] (1+t), \; \mbox{ for all } t \in [0,1]$, where
    \[
        \Lc_1 h\,(\tau)=\frac{1-\tau}{\tau}\,h'(\tau)+h(\tau).
    \]
It is easy to notice that as the dimension $n$ of the space grows, so does the order of the ordinary, linear,  differential operator $\Lc$ appearing in the symmetry relation \eqref{RC}.
\end{remark}

\begin{remark}
    The range condition \eqref{RC} can also be equivalently written as
    \begin{align*}
        [\Lc_k h](t) &= [\Lc_k h] (2-t), \quad \mbox{for all } t \in [0,1].
    \end{align*}
   In this form, the above condition is true for all $ t \in [0,2]$. 
\end{remark}
The above result for radial functions leads to a range characterization for SMT of arbitrary compactly supported smooth functions in the unit ball as follows.  To do so, we consider the spherical harmonics expansions of the functions $f$ and $g=\Rc f$.

We recall that the space of spherical harmonics functions of degree $m$ are the restrictions to the unit sphere $\Sb^{n-1}$ of homogeneous harmonic polynomials in $\Rb^n$ of degree $m$.  These are eigenfunctions of the spherical Laplacian $\Delta_{\Sb^{n-1}}$ with eigenvalue $-m(m+n-2)$. The dimension of the space of spherical harmonics of degree $m$ is $d_m$, where 
\[
d_{m}=\frac{(2m+n-2)(n+m-3)!}{m!(n-2)!}, \quad d_0 =1.
\]
For $m_1\neq m_2$, $Y_{m_1 l}$ and $Y_{m_2 l}$ are $L^2$ orthogonal functions on $\Sb^{n-1}$. We can choose the linearly independent set of spherical harmonics of a fixed degree $m$ to be orthonormal. We enumerate these spherical harmonics by $Y_{m,l}$ for $0\leq m<\infty$ and $1\leq l\leq d_m$. 

With these, we have 
\[
f(x)=\sum\limits_{m=0}^{\infty}\sum\limits_{l=1}^{d_{m}} f_{m,l}(|x|) Y_{m,l}(\frac{x}{|x|}),\quad 
f_{m,l}(r)=\int\limits_{\Sb^{n-1}} f(r\theta) \overline{Y}_{m,l}(\theta) \D \theta.
\]
Since $f\in C_c^{\infty}(\Bb)$, we have that $f_{m,l}\in C^{\infty}([0,1))$ with support strictly away from $1$.

Likewise, we expand $g=\Rc f$ into spherical harmonics: 
\[
g(\theta,t)=\sum\limits_{m=0}^{\infty}\sum\limits_{l=1}^{d_{m}} g_{m,l}(t) Y_{m,l}(\theta),
\]
with $g_{m,l}\in C_c^{\infty}((0,2))$.

\begin{theorem}[Range characterization - general case]\label{Thm1.4}
    Let $\Bb$ denote the unit ball in $\mathbb{R}^n$ for an odd $n \geq 3$, and $k: = (n-3)/2$. A function $g \in C_c^\infty (\Sb^{n-1}\times (0,2))$ is representable as $g = \Rc f$ for $f\in C_c^\infty(\Bb)$ if and only if  for each $(m,l), m\geq 0, 0\leq l\leq d_m$, $h_{m,l}(t)=t^{n-2}g_{m,l}(t)$ satisfies the following two conditions:
    \begin{itemize}
        \item there is a function $\phi_{m,l}\in C_c^{\infty}((0,2))$ such that 
        \Beq\label{GRC0}
            h_{m,l}(t)= D^{m} \phi_{m,l}(t), 
        \Eeq
        \item the function $\phi_{m,l}(t)$ satisfies 
        \begin{equation}\label{GRC}
           [\Lc_{m+k}\phi_{m,l}](1-t)= [\Lc_{m+k}\phi_{m,l}](1+t). 
        \end{equation}
    \end{itemize}
\end{theorem}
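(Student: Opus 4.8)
The plan is to bootstrap Theorem~\ref{Thm1.4} from Theorem~\ref{range} by combining the rotational equivariance of $\Rc$ with the classical dimensional shift that turns a degree-$m$ spherical harmonic component in $\Rn$ into a radial object in $\R^{n+2m}$. Since $\Rc$ intertwines the diagonal action of $O(n)$, it decomposes along spherical harmonics: there are one-dimensional operators $\Rc_m$ (depending on $m$ but not on $l$) with $\Rc\big(f_{m,l}(|x|)Y_{m,l}(x/|x|)\big)(\theta,t)=(\Rc_m f_{m,l})(t)\,Y_{m,l}(\theta)$, so that $g=\Rc f$ holds if and only if $g_{m,l}=\Rc_m f_{m,l}$ for every $(m,l)$. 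Moreover, $f\in C_c^\infty(\Bb)$ is equivalent to the statement that each coefficient has the form $f_{m,l}(r)=r^m\,\wt f_m(r)$ with $\wt f_m$ smooth, even, and supported in $[0,\rho_1]$ for a fixed $\rho_1<1$, together with rapid-in-$m$ decay of the $\wt f_m$ in every $C^j$-norm (the standard smoothness characterization of spherical harmonic coefficients); the key observation is that, because $|x|^m Y_{m,l}(x/|x|)$ is a harmonic polynomial, each $\wt f_m$ is a legitimate radial profile of a function in $C_c^\infty$ of the open unit ball of $\R^{n+2m}$, which has odd dimension, so Theorem~\ref{range} applies there with parameter $(n+2m-3)/2=m+k$.

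The analytic core is an intertwining identity of the form
\[
 t^{\,n-2}\,\big(\Rc_m(r^{m}\wt f\,)\big)(t)\;=\;c_{n,m}\,D^{m}\Big[\,t^{\,n+2m-2}\,\big(\Rc^{[n+2m]}\wt f\,\big)(t)\,\Big],
\]
valid for every radial $\wt f\in C_c^\infty$ of the open unit ball of $\R^{n+2m}$, where $\Rc^{[N]}$ denotes the spherical mean transform in dimension $N$ and $c_{n,m}\neq 0$. I would prove this on the Fourier--Hankel side: the Fourier transform carries $\Rc_m$ to a multiple of the Hankel transform of order $\tfrac n2-1+m$, the factor $r^{m}$ on the physical side corresponds on the Hankel side to the passage to the order proper to dimension $n+2m$, and the Bessel recursion $\tfrac1z\tfrac{d}{dz}\!\big(z^{-\mu}J_\mu(z)\big)=-z^{-\mu-1}J_{\mu+1}(z)$ (applied in the radius variable) reproduces precisely the operator $D^{m}$ and the weight $t^{\,n+2m-2}$; alternatively one can argue directly from the Gegenbauer/Funk--Hecke form of the kernel of $\Rc_m$. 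Granting this identity, both directions of Theorem~\ref{Thm1.4} follow componentwise from Theorem~\ref{range} in dimension $n+2m$. For the ``only if'' direction, given $g=\Rc f$ set $\phi_{m,l}(t):=c_{n,m}\,t^{\,n+2m-2}\,(\Rc^{[n+2m]}\wt f_m)(t)$; then $h_{m,l}=D^m\phi_{m,l}$, which is \eqref{GRC0}, the function $\phi_{m,l}$ lies in $C_c^\infty((0,2))$ because up to the constant $c_{n,m}$ it is a genuine spherical mean of $\wt f_m$ and hence has support bounded away from $0$ and $2$, and, up to $c_{n,m}$, it is exactly the function ``$h$'' of Theorem~\ref{range} for the datum $\Rc^{[n+2m]}\wt f_m$, so \eqref{GRC} holds with $\Lc_{m+k}$. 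For the ``if'' direction, \eqref{GRC0}--\eqref{GRC} say that a suitable multiple of $t^{-(n+2m-2)}\phi_{m,l}$ lies in $C_c^\infty((0,2))$ and satisfies the dimension-$(n+2m)$ range condition, so Theorem~\ref{range} produces a radial $\wt f_m\in C_c^\infty$ of the open unit ball of $\R^{n+2m}$ realizing it; putting $f_{m,l}(r):=r^m\wt f_m(r)$ and using the intertwining identity in the reverse direction yields $\Rc_m f_{m,l}=g_{m,l}$.

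It then remains to reassemble $f:=\sum_{m,l}f_{m,l}(|x|)Y_{m,l}(x/|x|)$ and verify $f\in C_c^\infty(\Bb)$, which forces $\Rc f=g$. The supports are uniform: $g\in C_c^\infty(\Sb^{n-1}\times(0,2))$ forces $\operatorname{supp}g\subset\Sb^{n-1}\times[a,b]$ with $0<a\le b<2$, and the standard support relation for spherical means ($\Rc^{[N]}u$ has radial support contained in $[1-\rho,1+\rho]$ if and only if $u$ is supported in $\{|x|\le\rho\}$) gives $\operatorname{supp}\wt f_m\subset\{|x|\le\rho_0\}$ for some $\rho_0<1$ independent of $m$. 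For smoothness, $g\in C^\infty$ makes $\|g_{m,l}\|_{C^j}$ decay faster than any power of $m$ (by trading powers of $m$ for powers of the spherical Laplacian, whose eigenvalue on degree-$m$ harmonics is $m(m+n-2)$), and one then controls $g_{m,l}\mapsto f_{m,l}$ using that in odd dimensions the inversion of $\Rc^{[n+2m]}$ is local and explicit: it recovers $f_{m,l}$ from $g_{m,l}$ by a differential operator whose order and coefficients grow only polynomially in $m$, so the rapid decay survives and the series converges in $C^\infty$.

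I expect the real work to be concentrated in two places. The first and genuine obstacle is pinning down the intertwining identity of the second step with the correct constant $c_{n,m}$ and checking that the weight $t^{\,n+2m-2}$ and the operator $D^m$ come out exactly as in \eqref{GRC0}; this is where the Bessel recursions and the precise normalization of $\Rc$ enter. The second is the uniform-in-$m$ bookkeeping in the reassembly step, which is ``soft'' but delicate because the order of the reconstruction operator grows with $m$. By contrast, the symmetry relation \eqref{GRC} itself is essentially free once the intertwining is available, being nothing more than Theorem~\ref{range} transplanted to dimension $n+2m$.
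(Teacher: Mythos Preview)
Your necessity argument is essentially the paper's. The paper computes $g_{m,l}$ via Funk--Hecke and then applies the Rodrigues formula for the Gegenbauer polynomial $C_m^{(n-2)/2}$ to extract $D^m$, obtaining $h_{m,l}=\text{const}\cdot D^m\phi_{m,l}$ with
\[
\phi_{m,l}(t)=\int_{|1-t|}^1 u^{1-m}f_{m,l}(u)\,\big(Q(t,u)\big)^{m+k}\,\D u.
\]
Writing $f_{m,l}(u)=u^m\wt f_m(u)$ turns the integrand into $u\,\wt f_m(u)\,Q(t,u)^{m+k}$, so $\phi_{m,l}$ is, up to a constant, exactly $t^{\,n+2m-2}\,\Rc^{[n+2m]}\wt f_m(t)$; this is your intertwining identity, and Proposition~\ref{NecessityProp} with $k\to m+k$ then yields \eqref{GRC}. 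Your proposed Hankel-side derivation via the Bessel recursion would be an alternative proof of the same identity.

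For sufficiency the paper does \emph{not} construct $f$ componentwise and reassemble. Instead it applies the cross-product identity of Theorem~\ref{Theorem:3.2} with parameter $m+k$ to $\phi_{m,l}$ and integrates by parts $m$ times to conclude that the order-$\tfrac{n-2}{2}$ Hankel transform of $g_{m,l}$ vanishes at every nonzero zero of $J_{m+n/2-1}$. That is exactly the hypothesis of the Agranovsky--Kuchment--Quinto characterization (Theorem~\ref{range-AKQ}), which then supplies $f\in C_c^\infty(\Bb)$ wholesale, with no reassembly required.

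Your direct-construction route has a genuine gap at the reassembly step. You need $\sum_{m,l} f_{m,l}(|x|)Y_{m,l}(x/|x|)$ to converge in $C^\infty$, hence rapid decay of $\|f_{m,l}\|_{C^j}$ in $m$ for every fixed $j$. You propose to deduce this from rapid decay of $g_{m,l}$ via the FPR inversion in dimension $n+2m$, noting that this operator has order $O(m)$. But the decay of spherical-harmonic coefficients reads $\|g_{m,l}\|_{C^j}\le C_{j,N}(1+m)^{-N}$ only for each \emph{fixed} $j$, with $C_{j,N}$ controlled by $\|g\|_{C^{j+2N}}$; for a generic $C^\infty$ function $g$ these constants can grow faster than any prescribed rate as $j\to\infty$. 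Even after absorbing the $m$ integrations implicit in $h_{m,l}=D^m\phi_{m,l}$ (which reduces the first $\Dc$ in Theorem~\ref{inversion-FPR} to $D^k$ acting on $h_{m,l}$), the second factor $\Dc^*$ in dimension $n+2m$ still has order $m+k$, so you are forced to take $j\sim m$, and ``polynomial growth of the operator order'' does not by itself dominate the possible growth of $C_{j,N}$. The paper's detour through Theorem~\ref{range-AKQ} is precisely what avoids this difficulty; if you wish to keep your architecture, you would need a much more careful accounting of all the $m$-dependent constants in the inversion (including $K(n+2m)$ and $c_{n,m}$) to show that the factorial-type decay they carry compensates the derivative loss.
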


\textbf{Range characterization of the SMT using half of the radial data.} 
    It is well known that in the spherical geometry of data acquisition (i.e., when the centers $p$ of the integration spheres are restricted to the boundary of the unit ball containing the support of the function $f$), one can uniquely recover $f$ from $\Rc f(p,t)$ using only half of the radial data, i.e., when $p\in\mathbb{S}^{n-1}$ and $t\in(0,1)$ or $t\in(1,2)$ (e.g., see \cite{Ambartsoumian2018, Ambartsoumian-Zarrad-Lewis, ambartsoumian2015inversion}). In other words, the knowledge of $\Rc f(p,t)$ for $t\in(0,1)$ completely determines $\Rc f(p,t)$ for $t\in(1,2)$, and vice versa. Therefore, the existence of relations between the two halves of the data set is not surprising. The remarkable feature of relations \eqref{RC} and \eqref{GRC} is their simplicity, which has enabled the discovery and the proof of several important properties of SMT (see Sections \ref{proof-CE} and \ref{further} here, as well as \cite{AAKN2}). 

\begin{remark}    
    The symmetry relations \eqref{RC} and \eqref{GRC} can be easily interpreted as a range characterization of the SMT using half of the radial data. For example, let us discuss the case of radial functions.     The general (non-radial) case can be handled in a similar fashion. 
    
    Consider a function $g \in C^\infty ([0,1])$ such that $\operatorname{supp} g\subseteq [\varepsilon,1]$ for some $\varepsilon>0$, and $h(t) \coloneqq t^{n-2} g(t)$ for $t\in[0,1]$. Plugging $h$ into the left-hand side of equation \eqref{RC}, one gets an ODE for $h(t)$ when $t\in(1,2]$. Notice, that the support requirement and the symmetry relation imply that $h(t)$ and all its derivatives are zero when $t\in[2-\varepsilon, 2]$. Thus, the ODE described above has a unique solution $h(t)$, $t\in(1,2]$. 
    
    By construction, Theorem \ref{range} implies that $g(t)$, $t\in[0,1]$, is representable as $g = \Rc f$ for a radial function $f \in C_c^\infty(\Bb)$ if and only if the two branches of $h(t)$ patch smoothly at $t=1$ so that $h\in C_c^\infty((0,2)$). 

\end{remark}    
    
    In this context, we would like to mention a recent related work of Peter Kuchment and Leonid Kunyansky \cite{KK}. While the current work was in preparation, one of the authors presented a preliminary version of this work at Isaac Newton Institute for Mathematical Sciences (Cambridge, UK) in May 2023. Peter Kuchment and Leonid Kunyansky, who were at the talk, later informed us that they have derived a range characterization for the same SMT that we study, using information from radii  $0\leq r\leq 1$  in all dimensions; see \cite{KK}. Their description is implicit and requires expanding into a series the exterior Radon transform of the solution of an exterior initial/boundary value problem for a related wave equation.

\vspace{2mm}

Our next two results provide  counterexamples to UCP for SMT in odd dimensions (see Section \ref{SRT-UCP} for the precise definition).

\begin{theorem}[Counterexample to UCP for SMT in odd dimensions - symmetric case]\label{NUCP-1}
    Let $n \geq 3$ be odd, $\epsilon \in (0,1)$ and let $U = B_\epsilon(0) \coloneqq \{ x \in \Rn: |x| < \epsilon\}$. There exists a non-trivial function $f \in C_c^\infty(\Bb)$ such that $f$ vanishes in $U$ and $\Rc f(p, t)= 0$ for all $p \in \Sn$ and $ t \in ( 1-\epsilon, 1+\epsilon)$.
\end{theorem}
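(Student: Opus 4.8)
The plan is to produce $f$ as a \emph{radial} function by running the range characterization of Theorem~\ref{range} in reverse, and to extract the support information afterwards. Put $k=(n-3)/2$. It suffices to construct a function $h\in C_c^\infty((0,2))$ that is not identically zero, vanishes on $(1-\epsilon,1+\epsilon)$, and satisfies the symmetry relation \eqref{RC}; for then $g:=t^{-(n-2)}h$ also lies in $C_c^\infty((0,2))$ (since $h$ vanishes near $t=0$), Theorem~\ref{range} provides a radial $f\in C_c^\infty(\Bb)$ with $g=\Rc f$, and $f\not\equiv0$ because $g\not\equiv0$. Since $f$ is radial, $\Rc f(p,t)=g(t)$ does not depend on $p\in\Sn$, so the vanishing of $g$ on $(1-\epsilon,1+\epsilon)$ gives $\Rc f(p,t)=0$ for all $p\in\Sn$ and $t\in(1-\epsilon,1+\epsilon)$. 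That this $f$ vanishes on $U=B_\epsilon(0)$ then follows from the locality of the inversion of $\Rc$ in odd dimensions (see Section~\ref{SRT} and the references therein): the known odd-dimensional reconstruction formulas express $f(x)$, for $x\in\Bb$, through differential operators in $x$ applied to an integral over $\Sn$ of differential operators in $t$ acting on $\Rc f(p,t)$ and evaluated at $t=|x-p|$. For $|x|<\epsilon$ one has $\{\,|x-p|:p\in\Sn\,\}=[\,1-|x|,\,1+|x|\,]$, which is a compact subset of $(1-\epsilon,1+\epsilon)$; since $g$ vanishes on a neighborhood of this set, $f(x)=0$.

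It remains to construct $h$. Fix small $\delta,\eta>0$ and look for $h=h_-+h_+$ with $\operatorname{supp} h_-\subset(\delta,1-\epsilon-\eta)$ and $\operatorname{supp} h_+\subset(1+\epsilon+\eta,2-\delta)$; then $h$ automatically vanishes on $(1-\epsilon,1+\epsilon)$, and, since $\Lc_k$ from \eqref{RC2} is a linear differential operator and therefore does not enlarge supports, $\Lc_k h_-$ is supported in $(0,1)$ and $\Lc_k h_+$ in $(1,2)$. Consequently, for $t\in[0,1]$, relation \eqref{RC} collapses to the single identity $[\Lc_k h_+](1+t)=[\Lc_k h_-](1-t)$. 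I would therefore pick $h_-\in C_c^\infty((\delta,1-\epsilon-\eta))$ first, set $K(u):=[\Lc_k h_-](2-u)$, which belongs to $C_c^\infty((1+\epsilon+\eta,2-\delta))$, and define $h_+$ to be the solution on $(1,2)$ of the linear ODE $\Lc_k h_+=K$ with $h_+\equiv0$ on a right neighborhood of $t=1$. This is legitimate because $\Lc_k$, as an ordinary differential operator of order $k$, is regular away from $t=0$ and $t=1$, while $K$ vanishes near both $t=1$ and $t=2$; by construction $[\Lc_k h_+](1+t)=K(1+t)=[\Lc_k h_-](1-t)$ on $[0,1]$, so \eqref{RC} holds.

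The remaining freedom must be spent to make $h_+$ compactly supported inside $(1,2)$, i.e.\ to force $h_+\equiv0$ near $t=2$. On the interval adjacent to $t=2$ on which $K$ vanishes, $h_+$ solves the regular homogeneous equation $\Lc_k h_+=0$, so $h_+$ vanishes there precisely when its Cauchy data $\big(h_+(2-\delta),h_+'(2-\delta),\dots,h_+^{(k-1)}(2-\delta)\big)$ all vanish. The map sending $h_-$ to this $k$-tuple is linear, so its kernel has finite codimension (at most $k$) in the infinite-dimensional space $C_c^\infty((\delta,1-\epsilon-\eta))$; choosing any nonzero $h_-$ in this kernel yields an admissible $h=h_-+h_+\not\equiv0$, which completes the construction. (When $n=3$, i.e.\ $k=0$, $\Lc_0$ is the identity, there are no side conditions, $h_+(u)=h_-(2-u)$, and $h(t)=h_-(t)+h_-(2-t)$ works directly.)

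Once Theorem~\ref{range} is in hand the argument is not deep, but two points need care. First, one must reconcile the rigidity of \eqref{RC} --- which couples the two halves $(0,1)$ and $(1,2)$ of the data --- with the requirement that $h$ vanish near the degenerate point $t=1$, and one must track supports and smoothness carefully so that the constructed $h$ really lies in $C_c^\infty((0,2))$, is nonzero, and vanishes on $(1-\epsilon,1+\epsilon)$; the reduction to solving an ODE for $h_+$ in terms of $h_-$ is precisely what turns the symmetry relation into only finitely many linear constraints, which are harmless on an infinite-dimensional space of test functions. Second, the passage from the vanishing of $g$ on $(1-\epsilon,1+\epsilon)$ to the vanishing of $f$ on $B_\epsilon(0)$ relies essentially on the local nature of the odd-dimensional inversion; this locality is unavailable in even dimensions, where the inversion is nonlocal and the status of UCP for SMT remains open.
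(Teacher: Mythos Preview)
Your proof is correct and follows essentially the same approach as the paper's first proof: use the range characterization of Theorem~\ref{range} to manufacture a radial $f$ whose data $g$ vanishes on $(1-\epsilon,1+\epsilon)$, then invoke the locality of the odd-dimensional inversion to conclude $f|_U=0$. The paper isolates the existence of such an $h$ as a separate lemma (Lemma~\ref{existence}), where it fixes the half of $h$ on $(1,2)$ first and solves the ODE for the half on $(0,1)$, whereas you do the reverse; and the paper arranges the required vanishing near the far endpoint by an explicit two-bump construction, whereas your finite-codimension argument (the map $h_-\mapsto$ Cauchy data of $h_+$ at $2-\delta$ is linear into $\mathbb{R}^k$, hence has infinite-dimensional kernel) handles all $k$ conditions at once and is arguably cleaner. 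The paper also gives a second, more direct proof that bypasses the range characterization entirely: take $f$ to be a high-order derivative of a bump supported in $(\epsilon,1)$ and integrate by parts in the explicit formula for $\Rc f$.
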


Note that the set $U$ here is taken to be a ball around the origin. One might wonder whether this is a special case due to  radial symmetry of the functions. However, this is not the case, and to disprove the unique continuation in full generality, we also have the following result.
\begin{corollary}[Counterexample to UCP for SMT in odd dimensions - general case]\label{NUCP-2}
    Let $n \geq 3$ be an odd integer and $U$ be an open set with $\overline{U} \subset \Bb$. There exists a non-trivial function $f \in C_c^\infty(\Bb) $ such that $f|_U =0$ and $\Rc f$ vanishes on all spheres passing through $U$.
\end{corollary}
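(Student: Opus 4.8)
The plan is to derive Corollary \ref{NUCP-2} from Theorem \ref{NUCP-1} by a simple engulfing argument: the counterexample of Theorem \ref{NUCP-1} already annihilates \emph{every} open subset of the ball $B_\epsilon(0)$ at once, so it suffices to trap the prescribed $U$ inside a concentric ball. Concretely, since $U$ is an open subset of $\Bb$ bounded away from $\Sn$, choose $\epsilon\in(0,1)$ with $\sup_{x\in\overline U}|x|<\epsilon<1$, so that $U\subseteq B_\epsilon(0)$, and apply Theorem \ref{NUCP-1} with this $\epsilon$ to obtain a nontrivial $f\in C_c^\infty(\Bb)$ vanishing on $B_\epsilon(0)$ with $\Rc f(p,t)=0$ for all $p\in\Sn$, $t\in(1-\epsilon,1+\epsilon)$.

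It then remains only to check the two conclusions. The vanishing $f|_U=0$ is immediate from $U\subseteq B_\epsilon(0)$. For the data condition, the single geometric observation needed is that a sphere $S(p,t)$ with center $p\in\Sn$ that meets $U$ in particular meets $B_\epsilon(0)$, and a sphere of radius $t$ whose center lies at distance $|p|=1$ from the origin can reach the open ball $B_\epsilon(0)$ only when $|t-1|<\epsilon$; hence every sphere passing through $U$ has radius in $(1-\epsilon,1+\epsilon)$, and $\Rc f$ vanishes on it by Theorem \ref{NUCP-1}. This settles the corollary whenever $\overline U$ is compact in $\Bb$, the meaningful regime for an interior continuation statement; note that the $f$ produced this way may be taken radial, so the very same radial function that refutes UCP for a ball about the origin refutes it for all such $U$.

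To emphasize that the failure of UCP is not an artifact of radial symmetry — say, to make the zero set of $f$ near $U$ a prescribed small ball $B_\delta(x_0)\subseteq U$ rather than one about $0$ — I would instead transport the radial example through a Möbius automorphism $\Phi$ of $\Bb$ with $\Phi(0)=x_0$. The structural fact that makes this work is that $\{\,S(p,t): p\in\Sn,\ 0<t<2\,\}$ is exactly the family of spheres meeting $\Sn$ transversally at angle $\arccos(t/2)$; since a conformal automorphism of $\Bb$ fixes $\Sn$ and preserves angles, it carries this family into the union of itself with the affine hyperplanes meeting $\Bb$, preserving the radius $t$ on the spherical part. Setting $f:=(f_0\circ\Phi^{-1})\cdot w$ with $f_0$ from Theorem \ref{NUCP-1} and $w$ the appropriate power of the conformal factor of $\Phi^{-1}$, so that $\int_{\Phi(S)}f\,\D\sigma$ is a nonzero multiple of $\int_S f_0\,\D\sigma$ for every sphere $S$, one gets that $f$ vanishes on $\Phi(B_\epsilon(0))\supseteq B_\delta(x_0)$ and $\Rc f$ vanishes on every sphere through $\Phi(B_\epsilon(0))$ whose $\Phi$-preimage is again a sphere $S(p,t)$.

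The main obstacle is precisely the last clause: a Möbius automorphism may send some $S(p,t)$ to a hyperplane, and then the vanishing of $\Rc f$ at the corresponding data point is not automatic, since $f_0$ need not have vanishing planar averages. Handling this requires either arranging $\Phi$ and $\epsilon$ so that no such exceptional sphere meets $\Phi(B_\epsilon(0))$, or strengthening Theorem \ref{NUCP-1} to also kill the hyperplane integrals of $f_0$ over planes through $B_\epsilon(0)$, which I expect to be harmless given the freedom in that construction. In any case, for the bare statement of Corollary \ref{NUCP-2} the engulfing argument of the first two paragraphs already suffices.
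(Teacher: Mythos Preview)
Your proof is correct and follows essentially the same engulfing argument as the paper: choose $\epsilon$ so that $U\subseteq B_\epsilon(0)$, invoke Theorem~\ref{NUCP-1}, and use the triangle inequality $|t-1|<\epsilon$ for any sphere $S(p,t)$ meeting $B_\epsilon(0)$; the paper takes $\epsilon=\sup_{x\in U}|x|$ rather than a strict upper bound, but the logic is identical. Your third and fourth paragraphs on M\"obius transport are interesting but, as you yourself note, unnecessary for the corollary as stated.
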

This will be proved by using the symmetric case, Theorem \ref{NUCP-1}. \\

We finish this section with a short discussion of a result required to prove the sufficiency parts of Theorems \ref{range} and \ref{Thm1.4}.  This following result will be proved in Section \ref{range-Hankel}.

\begin{theorem}\label{Theorem:3.2-Prelim}
    Let $h\in C_c^{\infty}((0,2))$ satisfy the evenness condition \eqref{RC}. Then for any $\lambda >0$ and $k\in\mathbb{N}\cup \{0\}$, the following identity holds: 
    \begin{align}\label{special-formula-1}
\lb \int\limits_0^{\infty} j_{k+\frac{1}{2}}(\lambda t) th(t)\D t\rb y_{k+\frac{1}{2}}(\lambda)   =\lb \int\limits_0^{\infty} y_{k+\frac{1}{2}}(\lambda t) th(t) \D t\rb j_{k+\frac{1}{2}}(\lambda),
\end{align}
where $j_\alpha$ and $y_\alpha$ are the normalized (or spherical) Bessel functions of the first and second kind, respectively (see Section \ref{BH}).
\end{theorem}

Formula \eqref{special-formula-1} is remarkable for two reasons. First, it provides an infinite family (corresponding to different choices of $h$) of ``cross product'' identities for the spherical Bessel functions of the first and second kind, analogs of which we did not find in literature. Therefore, it may be valuable as a standalone result in the context of theory of special functions. Second, it illuminates the structure of the zeros of the Hankel transform of a function in the range of the SMT, which play an important role in the description of the range of that transform (see \cite{Agranovsky-Finch-Kuchment-range, Agranovsky-Kuchment-Quinto, ref:AmbKuch-range, finch2006range}).

\section{Notation and Preliminaries}\label{notations}

Let $n \geq 3$ be an odd integer of the form $n=2k+3, k \geq 0$  and $\Rn$ denote the $n$-dimensional Euclidean space. Let $\Bb$ denote the unit ball in $\Rn$ with its boundary denoted as $\Sn$. 

\subsection{Bessel functions and Hankel transform}\label{BH}
For $\alpha \in \Cb$ such that $\mathrm{Re} (\alpha) \geq 0$, the Bessel function of the first kind of order $\alpha$ is defined as (see for instance \cite{Trimeche}) 
\begin{align*}
    J_\alpha(x) &= \lb \frac{x}{2} \rb^\alpha \sum\limits_{i=0}^\infty \frac{(-1)^i (\frac{x}{2})^{2i}}{i! \Gamma(i+\alpha+1)}, \quad \mbox{for} \quad x \in (0,\infty).
\end{align*}
Bessel functions of order $\alpha$ are solutions of the second order differential equation
\begin{align*}
    \frac{\D^2 y}{\D x^2} + \frac{1}{x} \frac{\D y}{\D x} + \lb 1- \frac{\alpha^2}{x^2} \rb y &= 0,
\end{align*}
called the Bessel differential equation. 

Let us also define the normalized (or spherical) Bessel functions of the first kind. For $\alpha \in \Rb$ such that $\alpha > -1/2$, these are given as
\begin{align*}
    j_\alpha(x) &= \Gamma(\alpha+1)\lb \frac{2}{x} \rb^{\alpha} J_\alpha(x) \\
    &= \Gamma(\alpha+1) \sum\limits_{i=0}^\infty \frac{(-1)^i (\frac{x}{2})^{2i}}{i! \Gamma(i+\alpha+1)}.
\end{align*}
We are mostly interested in the case when $\alpha$ is half of an odd positive integer. In this case, $j_\alpha$ is also given by Rayleigh's formula \cite{Abramowitz-Stegun}
\begin{align}\label{Bessel1}
    j_{\alpha}(x) &= -\frac{(-2)^{\alpha+1/2}\Gamma(\alpha+1)}{\sqrt{\pi}} \lb \frac{1}{x} \frac{\D}{\D x}\rb^{\alpha-1/2} \lb \frac{\sin x}{x}\rb, \quad \mbox{when} \quad 2\alpha \in \{1,3, \dots\}.
\end{align}
We will also need the normalized Bessel function of the second kind of half integer order, which is defined  as
\begin{align}\label{Bessel2}
    y_\alpha(x) &= -\frac{(-2)^{\alpha+1/2}\Gamma(\alpha+1)}{\sqrt{\pi}} \lb \frac{1}{x} \frac{\D}{\D x}\rb^{\alpha-1/2} \lb \frac{\cos x}{x}\rb, \quad \mbox{when} \quad 2\alpha \in \{1,3, \dots\}.
\end{align}

\begin{remark}
    We caution the reader that the normalization of the Bessel functions is not standard. Our normalization differs from the one in \cite{Abramowitz-Stegun}. The Rayleigh's formula stated above has been modified accordingly. For a comprehensive study of Bessel functions, we refer the reader to the classical treatise of Watson \cite{Watson}.
\end{remark}

The Hankel (also called Fourier-Bessel or Fourier-Hankel) transform of order $\alpha$ is defined as
\begin{align*}
    \Fc_\alpha (g) (\lambda) &= \int\limits_0^\infty g(t) j_\alpha(\lambda t) t^{2\alpha+1} \, \D t.
    \intertext{Its inverse is given by}
    g(t) &= \frac{1}{2^{2\alpha} \Gamma^2(\alpha+1)} \int\limits_{0}^\infty \Fc_\alpha(g)(\lambda) j_\alpha(t\lambda) \lambda^{2\alpha+1} \, \D \lambda.
\end{align*}

\subsection{SMT - properties}\label{SRT}

In the setting of SMT discussed at the beginning of Section \ref{main-results}, the problem of inverting SMT has been considered by many authors, and explicit inversion formulas exist. Before stating the relevant inversion formulas, let us point out that when $f$ is a radial function, $\Rc f$ is independent of the center of integration. This can be seen by a simple application of the Funk-Hecke theorem. Since we will make extensive use of this result, we will record it here: 
\begin{theorem}[Funk-Hecke]\cite[Theorem 3]{Seeley}\label{Funk-Hecke}
If $\int\limits_{-1}^{1} |F(t)|(1-t^{2})^{\frac{n-3}{2}}\, \D t <\infty$, then for any $\eta\in \Sb^{n-1}$, 
\[
\int\limits_{\Sb^{n-1}}F\lb \langle \sigma,\eta\rangle \rb Y_{m,l}(\sigma)\, \D  S(\sigma) = \frac{\o_{n-1}}{C_{m}^{\frac{n-2}{2}}(1)}\lb\int\limits_{-1}^{1} F(t) C_{m}^{\frac{n-2}{2}}(t) (1-t^{2})^{\frac{n-3}{2}} \D t\rb Y_{m,l}(\eta),
\]
where $\lvert \Sb^{n-2}\rvert$ denotes the surface measure of the unit sphere in $\Rb^{n-1}$, $C_{m}^{\frac{n-2}{2}}(t)$ are the Gegenbauer polynomials and $Y_{m,l}$ are the spherical harmonics. 
\end{theorem}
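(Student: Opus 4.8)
The plan is to recognize the left-hand side as the action of a rotation-invariant integral operator on spherical harmonics, and then to identify the scalar it produces. Fix $F$ satisfying the integrability hypothesis and define, for $u\in L^2(\Sn)$,
\[
(T_F u)(\eta)=\int_{\Sn} F(\langle\sigma,\eta\rangle)\,u(\sigma)\,\D S(\sigma),\qquad \eta\in\Sn .
\]
The integral converges absolutely for continuous $u$ because $\int_{\Sn}|F(\langle\sigma,\eta\rangle)|\,\D S(\sigma)$ reduces (by the slicing computed below) to $|\Sb^{n-2}|\int_{-1}^{1}|F(t)|(1-t^2)^{\frac{n-3}{2}}\,\D t<\infty$. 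First I would show that $T_F$ commutes with the natural $SO(n)$-action $u\mapsto u\circ R$: substituting $\sigma\mapsto R\sigma$, using rotation-invariance of $\D S$ and $\langle R\sigma,R\eta\rangle=\langle\sigma,\eta\rangle$, gives $T_F(u\circ R)=(T_F u)\circ R$. Since the spaces $\mathcal{H}_m$ of degree-$m$ spherical harmonics are precisely the $SO(n)$-isotypic components of $L^2(\Sn)$ (for $n\geq 3$ each is irreducible and occurs with multiplicity one), the intertwining operator $T_F$ must carry each $\mathcal{H}_m$ into itself, and Schur's lemma then forces $T_F|_{\mathcal{H}_m}=\lambda_m\,\mathrm{Id}$ for a scalar $\lambda_m=\lambda_m(F)$ depending on $m,n$ but not on $l$. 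This already gives $\int_{\Sn}F(\langle\sigma,\eta\rangle)Y_{m,l}(\sigma)\,\D S(\sigma)=\lambda_m\,Y_{m,l}(\eta)$; it remains to evaluate $\lambda_m$.

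To pin down $\lambda_m$ I would contract against the zonal harmonic. Multiplying the displayed identity by $\overline{Y_{m,l}(\eta)}$ and summing over $l$, the addition theorem $\sum_{l=1}^{d_m}Y_{m,l}(\sigma)\overline{Y_{m,l}(\eta)}=c_{m,n}\,C_m^{\frac{n-2}{2}}(\langle\sigma,\eta\rangle)$ turns both sides into multiples of the same Gegenbauer polynomial; evaluating the right side at $\sigma=\eta$ and using $C_m^{\frac{n-2}{2}}(1)\neq 0$, the (unknown) normalization constant $c_{m,n}$ cancels and leaves
\[
\lambda_m=\frac{1}{C_m^{\frac{n-2}{2}}(1)}\int_{\Sn}F(\langle\sigma,\eta\rangle)\,C_m^{\frac{n-2}{2}}(\langle\sigma,\eta\rangle)\,\D S(\sigma).
\]
The pleasant feature is that I never need the precise value of $c_{m,n}$ nor of $\sum_l|Y_{m,l}(\eta)|^2$; only that the same constant appears on both sides.

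Finally I would reduce this remaining surface integral, whose integrand depends only on $t=\langle\sigma,\eta\rangle$, to a one-dimensional integral. Writing $\sigma=t\,\eta+\sqrt{1-t^2}\,\omega$ with $\omega$ ranging over the unit sphere $\Sb^{n-2}$ of $\eta^{\perp}$, the surface measure slices as $\D S(\sigma)=(1-t^2)^{\frac{n-3}{2}}\,\D t\,\D S(\omega)$, so integrating out $\omega$ contributes the factor $|\Sb^{n-2}|$ and yields
\[
\lambda_m=\frac{|\Sb^{n-2}|}{C_m^{\frac{n-2}{2}}(1)}\int_{-1}^{1}F(t)\,C_m^{\frac{n-2}{2}}(t)\,(1-t^2)^{\frac{n-3}{2}}\,\D t,
\]
which is exactly the claimed constant; the hypothesis $\int_{-1}^{1}|F(t)|(1-t^2)^{\frac{n-3}{2}}\,\D t<\infty$, together with boundedness of $C_m^{\frac{n-2}{2}}$ on $[-1,1]$, is precisely what guarantees absolute convergence throughout. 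The main obstacle I anticipate is the representation-theoretic input, namely justifying that $T_F$ preserves each $\mathcal{H}_m$ so that Schur's lemma applies; this rests on the irreducibility and multiplicity-one decomposition of the degree-$m$ harmonics under $SO(n)$, and once that structural fact is granted the determination of $\lambda_m$ is routine. An alternative that avoids Schur would expand $F$ in the Gegenbauer polynomials $\{C_j^{\frac{n-2}{2}}\}_j$, which are orthogonal with respect to the weight $(1-t^2)^{\frac{n-3}{2}}$ on $[-1,1]$, and combine this with the addition theorem term by term, but the operator-theoretic route above is shorter and more transparent.
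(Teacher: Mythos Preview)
The paper does not prove this theorem; it is stated without proof as a classical result (``we will record it here'') and simply used as a tool. Your proposal is the standard representation-theoretic proof of Funk--Hecke and is correct: the rotation-invariance of $T_F$, Schur's lemma on the irreducible $SO(n)$-modules $\mathcal{H}_m$, the contraction against the zonal harmonic via the addition theorem, and the slicing of $\D S(\sigma)$ are all handled properly, and the integrability hypothesis is invoked exactly where needed.
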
 
Using the above formula, we have, 
\Beq\label{radon-radial}
\begin{aligned}
    \Rc f(p,t) &= \frac{1}{\o_n} \int\limits_{\Sn} f(|p+t \theta|) \, \D S(\theta) \\
    &= \frac{1}{\o_n} \int\limits_{\Sn} f \lb \sqrt{1+t^2+ 2 t (p \cdot \theta)} \rb \, \D S(\theta)\\
&= \frac{\o_{n-1}}{\o_n} \int\limits_{-1}^1 f \lb \sqrt{1+t^2 +  2st} \rb (1-s^2)^{\frac{n-3}{2}} \, \D s.
\end{aligned}
\Eeq

The last equality is obtained by Fubini's theorem. Alternately one can use the Funk-Hecke theorem (Theorem \ref{Funk-Hecke}). Note that the right-hand side is independent of $p$.  This observation is not new and has been used to obtain inversion procedures for SMT. The above equation can be seen as a Volterra integral equation of the first kind with a weakly singular kernel, which can be modified into a Volterra integral equation of the second kind and then solved using Picard's method of successive iterations. This procedure is not specific to radial functions. The case of general functions can also be solved similarly by expansion into spherical harmonics, see \cite{Ambartsoumian-Zarrad-Lewis, ambartsoumian2015inversion, R, Salman_Article}. Due to the rotation invariance of SMT, the $n$-th term in the spherical harmonics expansion of $\Rc f$ depends only on the $n$-th term in the expansion of $f$ via a Volterra integral equation, which has a unique solution. It follows that if $\Rc f$ is independent of the centers of integration, then $f$ is necessarily a radial function. 

Let us now state an explicit inversion formula in odd dimensions which we use in our proofs.

\begin{theorem}\cite[Theorem~3]{Finch-P-R}\label{inversion-FPR}
    A smooth function $f \in C_c^\infty(\Bb)$ can be obtained from the knowledge of its spherical mean transform as follows:
    \begin{align}
        f(x) &= K(n) \lb \Nc^* \Dc^* \PD_t^2 t \Dc \Nc f \rb (x) \label{FPR-formula1} \\
        &= K(n) \lb \Nc^* \Dc^* \PD_t t \PD_t \Dc \Nc f \rb (x) \label{FPR-formula2}\\
        \notag &= K(n) \Delta_x \lb \Nc^* \Dc^* t \Dc \Nc f\rb (x),
    \end{align}
    where $K(n) = \frac{-\pi}{2 \Gamma (n/2)^2}$, and the various operators involved are given by
    \begin{align*}
    (\Nc f) (p,t) &= t^{n-2} (\Rc f)(p,t),
    \intertext{and for a function $G \in C_c^\infty(\Sn \times (0,2))$,}
    (\Dc G)(p,t) &= \lb \frac{1}{2t} \frac{\PD}{\PD t} \rb^{k} (G(p,t)), \\
    (\Nc^* G)(x) &= \frac{1}{\o_n} \int\limits_{\Sn} \frac{G(p, |p-x|)}{|p-x|} \D S(p), \\
    (\Dc^* G)(p,t) &= (-1)^k t \Dc \lb \frac{G(p,t)}{t} \rb.
\end{align*}
\end{theorem}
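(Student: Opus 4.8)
The plan is to prove the three inversion formulas by exploiting the classical link between spherical means and the wave equation in odd dimensions, in the spirit of Finch--Patch--Rakesh. Throughout write $W := \PD_t^2 - \Delta_x$ for the wave operator and $\gamma_n := 1\cdot 3\cdots (n-2)$.

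First I would express the filtered, weighted data as a free wave field. By the Poisson--Kirchhoff formula in odd dimension $n=2k+3$, the solution of $Wu=0$ with Cauchy data $u(\cdot,0)=0$, $\PD_t u(\cdot,0)=f$ is
\[
u(x,t) = \frac{1}{\gamma_n}\lb\frac{1}{t}\PD_t\rb^{k}\lb t^{n-2}\,\Rc f(x,t)\rb = \frac{2^{k}}{\gamma_n}\,(\Dc\Nc f)(x,t),
\]
valid for every $x\in\Rn$, since $\lb\frac1t\PD_t\rb^{k}=2^{k}\Dc$ and $\Nc f = t^{n-2}\Rc f$. Differentiating in $t$ shows that $v:=\PD_t u = \frac{2^{k}}{\gamma_n}\PD_t\Dc\Nc f$ solves $Wv=0$ with data $v(\cdot,0)=f$, $\PD_t v(\cdot,0)=0$. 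Thus the operators $\Nc$, $\Dc$ and $\PD_t$ in the formula are precisely the ones that reconstruct, on the observation cylinder $\Sn\times(0,\infty)$, the Dirichlet trace of the free wave generated by $f$; in particular every quantity on the right-hand sides is computable from $\Rc f(p,t)$, $p\in\Sn$.

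Next I would recast the back-projection as a Green's-identity pairing. Apply Green's second identity for $W$ on the space-time cylinder $\Bb\times(0,T)$ with $T>2$, pairing $v$ against the time-reversed fundamental solution $E(x-y,\,-s)$ for fixed $x\in\Bb$, which satisfies $W_{y,s}E(x-y,-s)=\delta(x-y)\delta(s)$. Since $Wv=0$, the volume integral collapses to $v(x,0)=f(x)$. The temporal boundary term at $s=T$ vanishes because, by the strong Huygens principle in odd dimensions, the wave emitted by the compactly supported $f$ has left $\Bb$ before time $T$; the term at $s=0$ is killed by the Cauchy datum $\PD_t v(\cdot,0)=0$ together with the structure of $E$. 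The only surviving contribution is the lateral integral over $\Sn\times(0,T)$. Inserting the explicit odd-dimensional fundamental solution --- which is a $\lb\frac1t\PD_t\rb^{k}$-derivative of $\delta(s-|y|)/|y|^{n-2}$, supported exactly on the light cone --- evaluates the $s$-integral at $s=|p-x|$, reproduces the weight $1/|p-x|$ and the prefactor $1/\o_n$ of $\Nc^*$, transfers the cone-derivatives onto the data as the transpose operator $\Dc^*=(-1)^{k}t\,\Dc(\,\cdot/t)$, and assembles the remaining $t$- and $\PD_t$-factors into the bracket $\PD_t t\,\PD_t$. Collecting the constants $\gamma_n$, $2^{k}$, $\o_n$ and those of $E$ into $K(n)=-\pi/(2\Gamma(n/2)^2)$ yields $f=K(n)\,\Nc^*\Dc^*\PD_t t\PD_t\Dc\Nc f$, which is \eqref{FPR-formula2}.

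Finally I would deduce the other two forms. The equivalence of \eqref{FPR-formula1} and \eqref{FPR-formula2} reduces to the identity $\PD_t^2(t\,\cdot)-\PD_t(t\PD_t\,\cdot)=\PD_t(\cdot)$, so it suffices to show the back-projection of the extra term vanishes, $\Nc^*\Dc^*\PD_t\Dc\Nc f=0$; this is exactly the null contribution of the initial and final time slices isolated in the previous step. For the Laplacian form I would use that $v$ satisfies $\PD_t^2 v=\Delta_x v$ to move the second time derivative in $\PD_t t\PD_t$ out of the lateral integral as $\Delta_x$ acting on the $x$-potential $\Nc^*\Dc^* t\Dc\Nc f$. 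The main obstacle is the distributional bookkeeping of the boundary terms in the second step: one must verify rigorously that the final-time term vanishes --- the place where oddness of $n$ is indispensable and where the argument breaks down in even dimensions --- and that integrating by parts the cone-derivatives of the singular kernel $E$ reproduces $\Dc^*$, the weight $1/|p-x|$ and the sign $(-1)^{k}$ exactly; tracking the composite constant down to $-\pi/(2\Gamma(n/2)^2)$ is routine but error-prone and would be done last.
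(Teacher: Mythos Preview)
The paper does not supply its own proof of this statement: Theorem~\ref{inversion-FPR} is quoted verbatim from Finch--Patch--Rakesh (the citation \cite[Theorem~3]{Finch-P-R} in the heading) and is used purely as a tool, chiefly to argue locality of the inversion in Section~\ref{proof-CE}. So there is no in-paper argument to compare against; your sketch is aimed at reproducing the original FPR proof rather than anything the present authors wrote.

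That said, your outline is broadly faithful to the FPR strategy: recognise $\Dc\Nc f$ as (a constant multiple of) the odd-dimensional Kirchhoff wave, pair against the time-reversed fundamental solution via Green's identity on $\Bb\times(0,T)$, use strong Huygens to kill the $s=T$ boundary contribution, and read off the back-projection $\Nc^*$ and the transpose $\Dc^*$ from the explicit form of the fundamental solution on the light cone. Two places would need real work to make this a proof rather than a plan. First, the claim $\Nc^*\Dc^*\PD_t\Dc\Nc f=0$ --- which you invoke to pass from \eqref{FPR-formula2} to \eqref{FPR-formula1} --- is not simply ``the null contribution of the initial and final time slices''; in FPR this is a separate trace identity that requires its own argument and does not drop out automatically from the Green's-identity computation. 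Second, the distributional integration-by-parts that converts the cone-derivatives of $E$ into $\Dc^*$ with the precise factor $(-1)^k t\,\Dc(\cdot/t)$ and the weight $1/|p-x|$ is exactly where the derivation is delicate; you flag this yourself, but a rigorous version needs the explicit form of the odd-$n$ forward fundamental solution and a careful treatment of the spherical-mean kernel, not just a reference to ``transferring derivatives.'' The constant bookkeeping leading to $K(n)=-\pi/(2\Gamma(n/2)^2)$ is, as you say, routine once those two steps are pinned down.
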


\begin{remark}
    The fact that $f$ is necessarily a radial function if $\Rc f$ is independent of the centers of integration can also be seen from the inversion formula above. If $\Rc f$ is independent of $p$, then so is $\Dc^* \PD_t^2 t \Dc \Nc$, and hence $\Nc^* (\Dc^* \PD_t^2 t \Dc \Nc)(x)$ depends only on $|x|$, again by an application of Funk-Hecke theorem. 
\end{remark}

Our proof of sufficiency is based on the following range characterization given in \cite{Agranovsky-Kuchment-Quinto}, where several equivalent conditions are given.  Interestingly, a follow-up paper of ours to the current work \cite{AAKN2} also shows the equivalence of the range characterization condition proven in this work and that of \cite{finch2006range}; see the comment immediately following Theorem 3 there, \emph{directly}.
\begin{theorem}\cite[Theorem~11]{Agranovsky-Kuchment-Quinto}\label{range-AKQ}
    Let $n > 1$ be an odd integer. A function $g \in C_c^\infty(\Sn \times (0,2))$ is representable as $\Rc f$ for some $f \in C_c^\infty(\Bb)$ if and only if for any $m$, the $m$th order spherical harmonic term $\widehat{g}_m (p,\lambda)$ of $\widehat{g}(p,\lambda)$ vanishes at non-zero zeros of the Bessel function $J_{m+n/2-1}(\lambda)$, where
    \begin{align*}
        \widehat{g}(p,\lambda) &= \Fc_{\frac{n-2}{2}} (g)(p,\lambda)
    \end{align*}
    is the Hankel transform of $g$ of order $\alpha = (n-2)/2$, for each fixed $p$.
\end{theorem}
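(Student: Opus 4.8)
The plan is to prove both directions by passing to the Fourier/Hankel side, where the spherical mean transform diagonalizes into multiplication by normalized Bessel functions. First I would record the ``projection-slice'' identity for spherical means: for $f \in C_c^\infty(\Bb)$ with Fourier transform $\widehat f$, Fubini together with the fact that the spherical average $\frac{1}{\o_n}\int_{\Sn} e^{\I t\xi\cdot\theta}\,\D S(\theta) = j_{\frac{n}{2}-1}(t|\xi|)$ gives
\[
\Rc f(p,t) = c_n \int_{\Rn} \widehat f(\xi)\, e^{\I p\cdot\xi}\, j_{\frac{n}{2}-1}(t|\xi|)\,\D\xi .
\]
Applying the Hankel transform $\Fc_{\frac{n-2}{2}}$ in the variable $t$ and using the orthogonality relation $\int_0^\infty j_{\frac{n}{2}-1}(t\rho)\, j_{\frac{n}{2}-1}(\lambda t)\, t^{n-1}\,\D t = c\,\rho^{-(n-1)}\delta(\rho-\lambda)$ (a restatement of the Hankel inversion formula of Section \ref{BH}) collapses the radial integral to the sphere $|\xi|=\lambda$, yielding
\[
\widehat g(p,\lambda) = c_n' \int_{\Sn} \widehat f(\lambda\o)\, e^{\I\lambda p\cdot\o}\,\D S(\o), \qquad \widehat g = \Fc_{\frac{n-2}{2}}(\Rc f).
\]

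Next I would expand $\widehat f(\lambda\o) = \sum_{m,l} F_{m,l}(\lambda)\,Y_{m,l}(\o)$ and evaluate the angular integral with the Funk--Hecke theorem (Theorem \ref{Funk-Hecke}), i.e. the Bochner formula for the Fourier transform of a spherical harmonic, which produces exactly a factor $j_{m+\frac{n}{2}-1}(\lambda)$. This identifies the $m$th spherical-harmonic term as
\[
\widehat g_m(p,\lambda) = c_m\, j_{m+\frac{n}{2}-1}(\lambda)\sum_{l} F_{m,l}(\lambda)\,Y_{m,l}(p).
\]
Since $j_{m+\frac{n}{2}-1}(\lambda)$ is a nonzero constant multiple of $\lambda^{-(m+\frac{n}{2}-1)}J_{m+\frac{n}{2}-1}(\lambda)$, its positive zeros are precisely the nonzero zeros of $J_{m+\frac{n}{2}-1}$, while each $F_{m,l}$ is entire in $\lambda$ (Paley--Wiener, because $f_{m,l}\in C_c^\infty([0,1))$) and hence finite there. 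Necessity follows immediately: $\widehat g_m(p,\cdot)$ must vanish at every nonzero zero of $J_{m+\frac{n}{2}-1}$.

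For sufficiency I would run this computation in reverse. Given $g \in C_c^\infty(\Sn\times(0,2))$ whose Hankel transform satisfies the stated vanishing conditions, define candidate radial coefficients by dividing, $F_{m,l}(\lambda) := \widehat g_{m,l}(\lambda)/\bigl(c_m\, j_{m+\frac{n}{2}-1}(\lambda)\bigr)$. The hypothesis that $\widehat g_{m,l}$ vanishes at the zeros of $J_{m+\frac{n}{2}-1}$ makes every singularity of this quotient removable, so each $F_{m,l}$ is an entire even function. I would then verify the Paley--Wiener conditions---exponential type at most $1$ together with rapid decay along the real axis---so that $F_{m,l}$ is the order-$(m+\frac{n}{2}-1)$ Hankel transform of some $f_{m,l} \in C_c^\infty([0,1))$, set $f(x)=\sum_{m,l} f_{m,l}(|x|)Y_{m,l}(x/|x|)$, and confirm $\Rc f = g$ by the forward computation above.

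The main obstacle is precisely this sufficiency step: one must show not only that each quotient $F_{m,l}$ is individually of exponential type $\le 1$ and Schwartz-class on the real line, but that these estimates are sufficiently uniform in $(m,l)$ for the spherical-harmonic series defining $f$ to converge in $C_c^\infty(\Bb)$ with support confined to the closed unit ball. Controlling the type after dividing by $j_{m+\frac{n}{2}-1}$---equivalently, understanding how the zeros of $J_{m+\frac{n}{2}-1}$ interact with the entire function $\widehat g_{m,l}$ as $m\to\infty$---is the delicate analytic core, and is exactly where the structure of the Bessel zeros enters essentially.
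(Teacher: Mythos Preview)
The paper does not prove this theorem at all: it is quoted verbatim from \cite{Agranovsky-Kuchment-Quinto} as a known result and is used as a black box in the sufficiency arguments for Theorems~\ref{range} and~\ref{Thm1.4}. So there is no ``paper's own proof'' to compare against.

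That said, your sketch is essentially the strategy of the original reference. The necessity direction is complete as you wrote it: the projection-slice identity followed by Funk--Hecke produces the factorization $\widehat g_m(p,\lambda)=c_m\,j_{m+\frac{n}{2}-1}(\lambda)\sum_l F_{m,l}(\lambda)Y_{m,l}(p)$ with $F_{m,l}$ entire, and vanishing at the Bessel zeros is immediate. For sufficiency you have correctly located the genuine gap and honestly flagged it yourself: after dividing by $j_{m+\frac{n}{2}-1}$ one must show the quotients remain entire of exponential type~$\le 1$, are rapidly decreasing on the real axis, and that all of this holds with enough uniformity in $(m,l)$ for the series $\sum f_{m,l}(|x|)Y_{m,l}(x/|x|)$ to converge in $C_c^\infty(\Bb)$. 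This is real work---in the original paper it is handled not by direct Paley--Wiener estimates on the quotients but by showing the vanishing condition is equivalent to several other range descriptions (orthogonality, moment conditions, wave-equation formulations) for which sufficiency is more accessible. Your outline as written would not close on its own without either importing those equivalences or supplying the uniform estimates, which you do not attempt.
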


\subsection{Unique continuation property for spherical mean transform}\label{SRT-UCP}
Let $\Pc$ denote any operator. For any open set $U$, if $\Pc u|_U = 0$ and $u|_U = 0$ implies that $u$ vanishes identically, then $\Pc$ is said to possess a \emph{unique continuation property}. Some examples of operators possessing UCP are fractional powers of the Laplacian, the normal operators of the X-ray and momentum ray transforms, normal operators of $d$-plane transforms (for $d$ odd), etc. In all these examples, the inversion formulas are non-local in nature.

Motivated by the results for X-ray and momentum ray transforms, we propose the following analog of UCP in the context of SMT:
\begin{question}[Unique continuation for spherical mean transform]
    Let $U \subset \Bb$ be an arbitrary open set. Let $f \in C_c^\infty(\Bb)$ be such that $f$ vanishes on $U$, and the spherical mean transform of $f$ vanishes on all spheres intersecting $U$. Does $f$ vanish identically?
\end{question}

A closer look at the inversion formula above reveals that in odd dimensions, the inversion formula for SMT is local in nature, that is, the value of the function $f$ at a point $x$ depends only on the spherical means of $f$ on spheres passing through a small neighbourhood of $x$. This observation suggests that a unique continuation result should not hold for $\Rc$ in odd dimensions. This is indeed true and is the content of Theorem \ref{NUCP-1} and Corollary~\ref{NUCP-2}.
\subsection{Technique of Egorychev to analyze combinatorial terms} \label{Egorychev}
In this work, we will deal with several combinatorial terms. Our analysis of these combinatorial terms is based on a technique pioneered by Egorychev \cite{Egorychev}.  For $n\geq k\geq 0$, we can write ${n\choose k}$ in terms of one of the following contour integrals: 
\begin{align} 
\label{Eg1}{n\choose k}& =\frac{1}{2\pi \I}\int\limits_{|z|=\ve} \frac{(1+z)^{n}}{z^{k+1}} \D z \\
\label{Eg2}&= \frac{1}{2\pi \I}\int\limits_{|z|=\ve} \frac{1}{(1-z)^{k+1}z^{n-k+1}} \D z. 
\end{align}
Note that with either of the contour integrals, if $n,k\geq 0$ with $n<k$, then ${n\choose k}=0$. Furthermore, ${0\choose 0}=1$ as well ${n\choose k}=0$ when $n\geq 0$ and $k<0$.  Some care is required when analyzing combinatorial terms when $n<0$. We make this precise in our analysis in Section \ref{proofs}.

\subsection{Some auxiliary lemmas}\label{aux-lemma}
In this subsection, we collect some basic mathematical results which will be used in the calculations. All these results are well known and are stated for the sake of completeness and easy reference. 

Let us begin by recalling the Fa\`a di Bruno formula, which is an identity relating the higher order derivatives of composition of two functions to the derivatives of the functions. This is a generalization of the usual chain rule  to higher order derivatives (see, for instance, \cite{Krantz-Parks_primer}).

\begin{lemma}[Fa\`a di Bruno formula]
    Let $F$ and $G$ be two smooth functions of a real variable. The derivatives of the composite function $F \circ G$ in terms of the derivatives of $F$ and $G$ are given as
    \begin{equation*}
        \frac{\D^p}{\D t^p} F(G(t)) = \sum\limits_{q=1}^p F^{(q)}(G(t)) B_{p,q} (G^{(1)}(t), \dots, G^{(p-q+1)}(t)),
    \end{equation*}
    where $B_{p,q}$ are the Bell polynomials given by
    \[
    B_{p,q}(x_1, \dots, x_{p-q+1}) = \sum \frac{p!}{j_1! \dots j_{p-q+1}!} \lb \frac{x_1}{1!}\rb^{j_1}\cdots  \lb \frac{x_{p-q+1}}{(p-q+1)!}\rb^{j_{p-q+1}},
    \]
with the sum taken over all non-negative sequences, $j_1,\cdots, j_{p-q+1}$ such that the following two conditions are satisfied: 
\begin{align*}
   & j_1+ j_2+\cdots + j_{p-q+1}= q,\\
   &j_1+2j_2+ \cdots + (p-q+1) j_{p-q+1}=p.
\end{align*}
\end{lemma}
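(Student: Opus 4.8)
The plan is to prove the formula by induction on the order $p$ of differentiation, which is the most transparent route and avoids the combinatorial subtleties of a direct multinomial argument. The base case $p=1$ is the ordinary chain rule: $\frac{\D}{\D t} F(G(t)) = F^{(1)}(G(t)) G^{(1)}(t)$, which matches the stated formula since $B_{1,1}(x_1) = x_1$. For the inductive step, I would assume the formula holds for order $p$ and differentiate both sides once more with respect to $t$, then reorganize the resulting sum so that it matches the claimed expression at order $p+1$.

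First I would apply $\frac{\D}{\D t}$ to the right-hand side $\sum_{q=1}^{p} F^{(q)}(G(t)) \, B_{p,q}\bigl(G^{(1)}(t), \dots, G^{(p-q+1)}(t)\bigr)$ using the product rule on each summand. Differentiating $F^{(q)}(G(t))$ by the chain rule produces $F^{(q+1)}(G(t)) \, G^{(1)}(t)$, while differentiating the Bell polynomial $B_{p,q}$ (a polynomial in the $G^{(j)}$) produces a sum of terms in which one factor $G^{(j)}$ is replaced by $G^{(j+1)}$. This yields two families of contributions to the coefficient of each $F^{(r)}(G(t))$ at order $p+1$: one coming from the $q=r-1$ term (via the chain-rule factor $G^{(1)}$) and one coming from the $q=r$ term (via differentiating the Bell polynomial). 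The heart of the proof is then to verify that these two families combine into exactly $B_{p+1,r}\bigl(G^{(1)}, \dots, G^{(p-r+2)}\bigr)$.

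The main obstacle, and the only genuinely nontrivial step, is establishing the underlying recurrence for the Bell polynomials, namely
\[
    B_{p+1,r}(x_1, \dots, x_{p-r+2}) = x_1 \, B_{p,r-1}(x_1, \dots, x_{p-r+2}) + \sum_{j=1}^{p-r+1} \binom{?}{?} x_{j+1} \frac{\PD}{\PD x_j} B_{p,r}(\dots),
\]
which I would prove directly from the explicit multinomial definition of $B_{p,q}$. Concretely, starting from the defining sum over sequences $(j_1, \dots)$ constrained by $\sum j_i = q$ and $\sum i\, j_i = p$, I would track how incrementing $p$ to $p+1$ (keeping the block-count constraint) partitions the admissible sequences according to which single $G^{(i)}$ had its order raised, and match the combinatorial weights $\frac{p!}{j_1! \cdots}$ against the factor introduced by the product rule. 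This is a careful but routine bookkeeping check on the factorial coefficients; once it is in place, collecting the coefficient of each $F^{(r)}(G(t))$ completes the induction and hence the proof.
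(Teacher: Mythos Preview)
The paper does not actually prove this lemma: it is stated in Section~\ref{aux-lemma} as a classical result with a reference to \cite{Krantz-Parks_primer}, prefaced by the remark that ``all these results are well known and are stated for the sake of completeness and easy reference.'' So there is no proof in the paper to compare against.

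Your proposed induction argument is the standard and correct route. One small correction: the recurrence you write with placeholder binomial coefficients $\binom{?}{?}$ in fact carries no binomial factor at all. The identity you need is simply
\[
B_{p+1,r}(x_1,\dots,x_{p-r+2}) \;=\; x_1\, B_{p,r-1}(x_1,\dots,x_{p-r+2}) \;+\; \sum_{j=1}^{p-r+1} x_{j+1}\,\frac{\partial B_{p,r}}{\partial x_j}(x_1,\dots,x_{p-r+1}),
\]
which follows directly from the multinomial definition by the bookkeeping you describe: a monomial in $B_{p+1,r}$ indexed by $(j_1',\dots)$ with $\sum j_i'=r$ and $\sum i j_i' = p+1$ arises either from a monomial in $B_{p,r-1}$ by incrementing $j_1$ (the $x_1 B_{p,r-1}$ term) or from a monomial in $B_{p,r}$ by shifting one factor $x_m \to x_{m+1}$ (the derivative term), and the factorial weights $\frac{p!}{\prod j_i!}\prod (1/i!)^{j_i}$ match after multiplying through by $p+1$. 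With that recurrence established, the induction closes exactly as you outline.
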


We will be working with the operator $D$ defined as
\[
D  = \frac{1}{t} \frac{\D}{\D t}.
\]
Multiplying the standard chain rule by $\frac{1}{t}$, we see that the $D-$derivative of composition of two functions can then be re-written as
\[
D (F(G(t))) = F'(G(t)) \cdot DG(t),
\]
where $F'$ denotes the usual derivative of $F$. The following lemma is then an easy verification.
\begin{lemma}[Fa\`a di Bruno formula for the operator $D$]\label{FdB0}
    Let $F$ and $G$ be two smooth functions of 1-real variable. The $D$-derivatives of the composite function $F \circ G$ are given as
    \begin{equation*}
        D^p F(G(t)) = \sum\limits_{q=1}^p F^{(q)}(G(t)) B_{p,q} ((DG)(t), \dots, D^{(p-q+1)}G(t)).
    \end{equation*}
\end{lemma}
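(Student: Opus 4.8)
The plan is to reduce the statement to the classical Fa\`a di Bruno formula (the preceding Lemma) by a change of variables that turns the weighted operator $D = \frac{1}{t}\frac{\D}{\D t}$ into an ordinary derivative. The crucial observation is that on the half-line $t > 0$ the substitution $s = t^2/2$ is a diffeomorphism onto $(0,\infty)$, and under it $\frac{\D}{\D s} = \frac{\D t}{\D s}\frac{\D}{\D t} = \frac{1}{t}\frac{\D}{\D t} = D$. Thus $D$ is literally the ordinary derivative with respect to the new variable $s$, and the classical chain-rule/product-rule structure on which Fa\`a di Bruno rests is available verbatim.

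First I would fix $t > 0$ and set $s = t^2/2$, writing $t = t(s) = \sqrt{2s}$. Given the smooth functions $F$ and $G$, I define $\tilde{G}(s) := G(t(s))$, so that the composite $F(G(t))$, viewed as a function of $s$, equals $F(\tilde{G}(s))$. By the operator identity $D = \frac{\D}{\D s}$ noted above, applying $D^p$ to $F(G(t))$ is the same as applying $\frac{\D^p}{\D s^p}$ to $F(\tilde{G}(s))$. Next I would invoke the classical Fa\`a di Bruno formula with the derivative $\frac{\D}{\D s}$, giving
\[
\frac{\D^p}{\D s^p} F(\tilde{G}(s)) = \sum_{q=1}^p F^{(q)}(\tilde{G}(s))\, B_{p,q}\big(\tilde{G}^{(1)}(s), \dots, \tilde{G}^{(p-q+1)}(s)\big).
\]
It then remains to translate each ingredient back to the $t$-variable: since $\frac{\D}{\D s} = D$, one has $\tilde{G}^{(j)}(s) = (D^j G)(t)$ for every $j$; moreover $F^{(q)}$ denotes the $q$-th derivative of $F$ with respect to its own argument, which is unaffected by the change of variables, and $\tilde{G}(s) = G(t)$. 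Substituting these identifications into the displayed equation yields exactly
\[
D^p F(G(t)) = \sum_{q=1}^p F^{(q)}(G(t))\, B_{p,q}\big((DG)(t), \dots, (D^{p-q+1}G)(t)\big),
\]
which is the asserted formula.

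This argument is essentially bookkeeping, so I do not expect a genuine obstacle; the only point requiring care is the restriction to $t>0$, where $s = t^2/2$ is a valid diffeomorphism, together with the routine verification that $\frac{\D}{\D s}$ and $D$ agree as operators once functions of $t$ are re-expressed in $s$. Since all functions in the paper to which $D$ is applied are considered on $t>0$ (indeed with support away from the origin), this causes no loss. An alternative, self-contained route would be a direct induction on $p$, using that $D$ is a derivation, i.e. $D(\phi\psi) = (D\phi)\psi + \phi(D\psi)$ inherited from $\frac{\D}{\D t}$, together with the standard recurrence for the Bell polynomials $B_{p,q}$; this avoids the change of variables but requires reproving the Bell polynomial recursion, so the substitution approach is the more economical of the two.
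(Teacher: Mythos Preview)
Your argument is correct. The substitution $s=t^2/2$ does exactly what you claim: it converts $D$ into $\frac{\D}{\D s}$ on $t>0$, after which the classical Fa\`a di Bruno formula transports back verbatim, since $\tilde G^{(j)}(s)=(D^jG)(t)$ and the outer derivatives $F^{(q)}$ are untouched.

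The paper, by contrast, does not really prove the lemma; it merely records the single-step chain rule $D(F(G(t)))=F'(G(t))\cdot DG(t)$ and declares the general case an ``easy verification,'' implicitly pointing toward the inductive route you mention at the end. Your change-of-variables argument is a genuinely different and more economical justification: it imports the entire combinatorial content of the Bell polynomials from the classical statement in one stroke, whereas the paper's suggested verification would in effect repeat the standard induction with $D$ in place of $\frac{\D}{\D t}$. The only cost of your approach is the restriction to $t>0$, which, as you note, is harmless here; the identity then extends to $t<0$ by the analogous substitution (or by noting both sides are smooth in $t$ away from $0$).
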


It can be quite difficult to work with the above formula in its full generality. However, for the case that we have at hand, applying Fa\`a di Bruno formula becomes much simpler. In our case, we have $D^j G = 0$ for $j \geq 3$, and the formula simplifies to

\begin{lemma}[Fa\`a di Bruno formula - special case]\label{FdB}
    Let $F$ and $G$ be two smooth functions of 1-real variable such that $D^jG = 0$ for $j \geq 3$. The following identity holds
    \begin{align*}
        D^p F(G(t)) &= \sum\limits_{q \geq p/2}^p  \frac{p!}{(2q-p)! (p-q)! 2^{p-q}} F^{(q)}(G(t)) \lb DG(t)\rb^{2q-p} \lb D^2G(t) \rb^{p-q}.
    \end{align*}
\end{lemma}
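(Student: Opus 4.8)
The plan is to prove Lemma~\ref{FdB} by specializing the general Fa\`a di Bruno formula for the operator $D$ (Lemma~\ref{FdB0}) to the situation where $G$ is a ``quadratic'' function with respect to $D$, i.e.\ $D^jG = 0$ for all $j \geq 3$. Under this hypothesis the Bell polynomial $B_{p,q}((DG)(t), D^2G(t), D^3G(t), \dots)$ has all arguments past the second slot equal to zero, so only the monomials in the first two variables survive. First I would write out $B_{p,q}$ from its defining sum and discard every term containing a factor $x_r$ with $r \geq 3$; this forces $j_3 = j_4 = \cdots = 0$ in the summation index, leaving only $j_1, j_2$.

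With only $j_1,j_2$ nonzero, the two constraints in the Bell polynomial definition become the linear system $j_1 + j_2 = q$ and $j_1 + 2j_2 = p$, which has the unique solution $j_2 = p - q$ and $j_1 = 2q - p$. Thus the inner sum collapses to a single term: substituting into
\[
B_{p,q}(x_1, x_2, 0, \dots) = \sum \frac{p!}{j_1!\,j_2!\,0!\cdots} \lb \frac{x_1}{1!}\rb^{j_1}\lb \frac{x_2}{2!}\rb^{j_2}
\]
gives $\dfrac{p!}{(2q-p)!\,(p-q)!\,2^{p-q}}\,x_1^{2q-p}\,x_2^{p-q}$ with $x_1 = DG(t)$, $x_2 = D^2 G(t)$. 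Plugging this back into the statement of Lemma~\ref{FdB0} yields exactly formula claimed in Lemma~\ref{FdB}. I would also note the implicit range restriction: nonnegativity of $j_1 = 2q - p$ forces $q \geq p/2$, and $j_2 = p-q \geq 0$ forces $q \leq p$, which is why the sum runs over $p/2 \leq q \leq p$ (interpreting $q \geq p/2$ as $q \geq \lceil p/2 \rceil$ for integer $q$); for $q$ outside this range the corresponding term vanishes because a factorial of a negative integer appears in the denominator.

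There is essentially no serious obstacle here — the result is a bookkeeping specialization — so the only thing requiring a little care is the edge-case verification: checking that $p=0$ (empty, giving $F(G(t))$) and $p=1$ (giving $F'(G(t))\,DG(t)$) come out correctly, and confirming that the Bell-polynomial convention handles $j_1 = 0$ or $j_2 = 0$ gracefully (e.g.\ when $p$ is even and $q = p/2$, so $j_1 = 0$, or when $q = p$, so $j_2 = 0$). I would dispatch these in one or two lines. Optionally, for a self-contained argument one could instead prove the identity directly by induction on $p$: assume the formula for $p$, apply $D$ once more using $D\big(F^{(q)}(G)\big) = F^{(q+1)}(G)\,DG$ and the product rule with $D\big((DG)^a(D^2G)^b\big) = a(DG)^{a-1}(D^2G)^{b+1}$ (again using $D^3G = 0$), and match the resulting coefficients against Pascal-type recurrences for $\frac{p!}{(2q-p)!(p-q)!2^{p-q}}$; but the specialization of Lemma~\ref{FdB0} is cleaner and is the route I would present.
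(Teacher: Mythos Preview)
Your proposal is correct and follows essentially the same route as the paper: specialize the Fa\`a di Bruno formula for $D$ (Lemma~\ref{FdB0}), observe that $D^jG=0$ for $j\geq 3$ forces $j_r=0$ for $r\geq 3$, solve the resulting two constraints $j_1+j_2=q$, $j_1+2j_2=p$ to get $j_1=2q-p$, $j_2=p-q$, and read off the coefficient and the range $q\geq p/2$. The paper's proof is exactly this, stated more tersely and without your optional induction alternative or edge-case remarks.
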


\begin{proof}
    Due to the existence of only two non-trivial $D$ derivatives of $G$, the Bell polynomials are subject to the following two conditions: 
    \[
        j_1+j_2=q, \quad j_1+2j_2=p.
    \]
    Solving this gives the following unique solution: $j_2=p-q$ and $j_1=2q-p$. Since $j_1\geq 0$, we have the additional requirement that $q\geq p/2$. With all these considerations, we arrive at the required formula for $D^p(F(G(t)))$.
\end{proof}

Finally, let us record the expression for repeated integration by parts with the operator $D$.

\begin{lemma}\label{lemma-IBP}
    For two smooth functions $F$ and $G$, the following identity holds:
    \begin{align}
        \int\limits_{a}^b \PD_t D^k F \cdot G \, \D t &= \left [ \sum\limits_{l=0}^{k-1} (-1)^l D^{k-l} F \cdot D^{l} G \right ]_{t=a}^{b} + (-1)^k \int\limits_a^b \PD_t F \cdot D^{k} G \, \D t, 
    \end{align}
    where the sum is interpreted as empty for $k=0$.
\end{lemma}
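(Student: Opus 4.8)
The plan is to prove the identity by induction on $k$, the engine being a single integration-by-parts step that reduces the claim for $k$ to the claim for $k-1$ with $G$ replaced by $DG$.

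First I would record two elementary facts about $D=\frac{1}{t}\frac{\D}{\D t}$. Since $\PD_t = t\,D$, we have $\PD_t\big(D^{k-1}F\big) = t\,D^{k}F$ for every $k\ge 1$, and likewise $G'=\PD_t G = t\,DG$. Now start from $\int_a^b \PD_t\big(D^{k}F\big)\cdot G\,\D t$ and apply an ordinary integration by parts to get $\big[D^{k}F\cdot G\big]_{a}^{b} - \int_a^b D^{k}F\cdot G'\,\D t$. Rewriting $G'=t\,DG$ and $t\,D^{k}F=\PD_t\big(D^{k-1}F\big)$ converts the remaining integral, yielding the recursion
\[
\int_a^b \PD_t\big(D^{k}F\big)\cdot G\,\D t \;=\; \big[D^{k}F\cdot G\big]_{a}^{b} \;-\; \int_a^b \PD_t\big(D^{k-1}F\big)\cdot (DG)\,\D t .
\]

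Next I would run the induction. The base case $k=0$ is a tautology: the boundary sum is empty and both sides reduce to $\int_a^b \PD_t F\cdot G\,\D t$. For the inductive step, apply the recursion with $k$ replaced by $k+1$, and then invoke the induction hypothesis on $\int_a^b \PD_t\big(D^{k}F\big)\cdot(DG)\,\D t$, i.e. with $G$ replaced by $DG$, using $D^{l}(DG)=D^{l+1}G$ and $D^{k}(DG)=D^{k+1}G$. Substituting back, the isolated boundary term $\big[D^{k+1}F\cdot G\big]_a^b$ supplies the $l=0$ summand, while the negative of the shifted sum $-\sum_{l=0}^{k-1}(-1)^{l}D^{k-l}F\cdot D^{l+1}G$ supplies the $l=1,\dots,k$ summands after reindexing $j=l+1$; together they reassemble $\big[\sum_{l=0}^{k}(-1)^{l}D^{k+1-l}F\cdot D^{l}G\big]_a^b$. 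The sign on the remaining integral becomes $-(-1)^{k}=(-1)^{k+1}$, which is exactly the claimed formula for $k+1$.

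The argument is purely formal; the only hypothesis actually used is smoothness of $F$ and $G$ on a neighborhood of $[a,b]$, so there is no analytic obstacle. The one spot that requires a little care is the index bookkeeping in the inductive step, namely merging the singled-out boundary term with the reindexed shifted sum and tracking the sign $(-1)^{k}\mapsto(-1)^{k+1}$ — but this is routine.
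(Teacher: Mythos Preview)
Your induction argument is correct: the single-step recursion $\int_a^b \PD_t(D^{k}F)\cdot G\,\D t = [D^{k}F\cdot G]_a^b - \int_a^b \PD_t(D^{k-1}F)\cdot(DG)\,\D t$ follows from ordinary integration by parts together with $\PD_t = tD$, and the inductive reassembly of boundary terms and signs is handled cleanly. The paper omits the proof as straightforward, and your approach is exactly the natural one it has in mind.
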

\noindent The proof is straightforward and hence omitted.

\section{Proof of main results}\label{proofs}

\subsection{Range characterization for radial functions}\label{range-Hankel}
In this section, we give the proof of Theorem~\ref{range}. Let us begin by giving our motivation briefly using the figure below, and by considering the case of 3-dimensions, where the necessary condition is fairly straightforward to observe. When the function $f$ possesses radial symmetry, some relation between $\Rc f$ at points $1 \pm t$ is expected, as the figure below suggests.
\begin{figure}[ht]
    \centering  \includegraphics[scale=0.6]{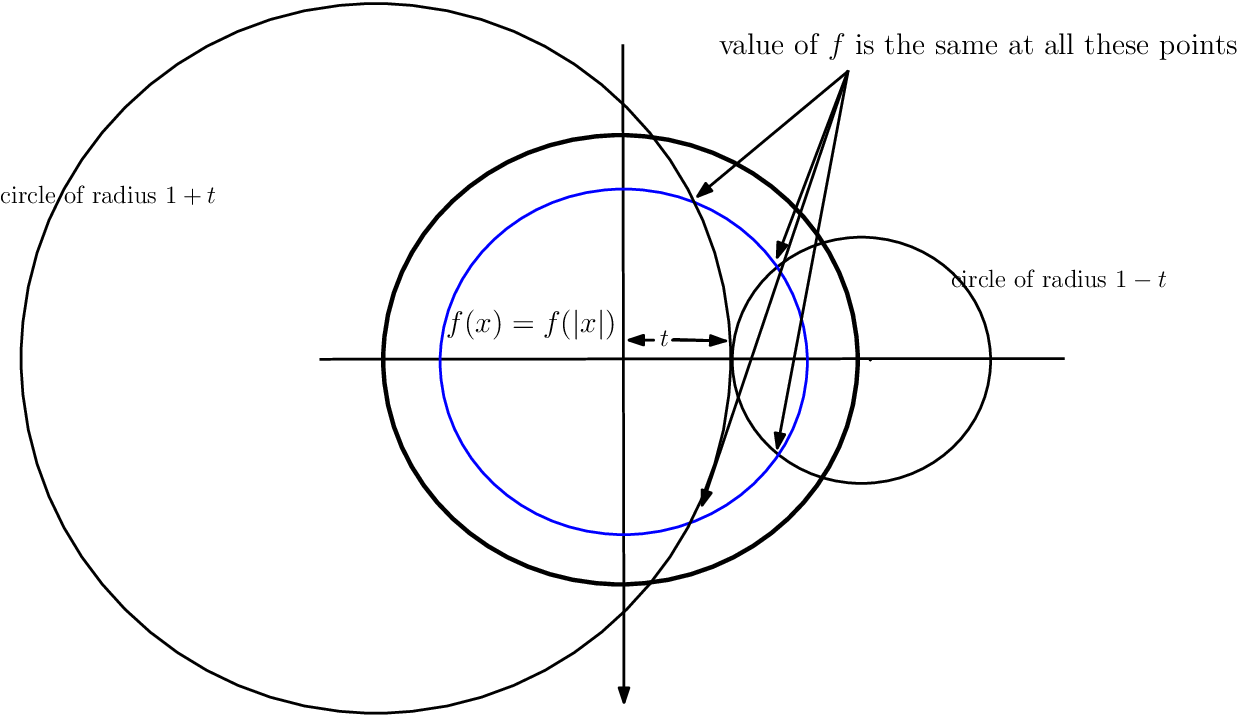}
    \caption{Relation between SMT of a radial function at radii $(1+t)$ and $(1-t)$ for $0<t<1$.}
    \label{figure}
\end{figure}
Notice that both the spheres (of radii $1 \pm t$) pass through points having the same values of $f$. Let us now consider the case of $3$-dimensions. For $t \in (0,2)$, we have  
\begin{align*}
    \Rc f(t) &= \frac{2\pi}{4\pi} \int\limits_{-1}^1 f \lb \sqrt{1+t^2 +  2st} \rb  \, \D s.
\end{align*}
Consider the change of variables $u = \sqrt{1+t^2 +  2st}$ to obtain 
\begin{align*}
    \Rc f (t) &= \frac{1}{2t}  \int\limits_{|1-t|}^{1+t} u f(u) \, \D u \\
    &= \frac{1}{2t}  \int\limits_{|1-t|}^{1} u f(u) \, \D u,
\end{align*}
since $f$ vanishes outside the unit ball and it follows that the function $t \Rc f (t)$ satisfies
\begin{align*}
    [t \Rc f ](1-t) & = [t \Rc f ](1+t) \mbox{ for } t\in[0,1],
    \intertext{or equivalently}
    [t \Rc f ](t) & = [t \Rc f ](2-t) \mbox{ for } t\in[0,1].
\end{align*}
This relation also suggests working with $t^{n-2} \Rc f$ instead of $\Rc f$. 

\subsubsection{Preliminary calculation in 5-D}
In order to see what condition to expect, let us consider the case of spherical mean transform in 5-dimensions, which is computationally the first non-trivial case. Let $f$ be a smooth radial function supported in the unit ball in $\Rb^5$, that is, $f(x)=\wt{f}(|x|)$ for a smooth compactly supported function $\wt{f}$ on $[0,\infty)$.  In order to avoid proliferation of new notation, we use the same $f$ to denote the function of one variable associated to $f$. Then 
\Beq
\begin{aligned} \label{9.1}
\Rc f(p,t)&=\frac{1}{\o_{5}}\int\limits_{\Sb^4} f(p+ t\theta) \D \theta\\
&=\frac{1}{\o_{5}}\int\limits_{\Sb^4} f(\sqrt{1+t^2 + 2 tp\cdot\theta}) \D \theta.
\end{aligned}
\Eeq
Applying Funk-Hecke theorem, we get,
\Beq\label{19.4}
g(t)=\frac{\o_4}{\o_5}\int\limits_{-1}^{1} f(\sqrt{1+t^2 -2st}) (1-s^2) \D s.
\Eeq
In the equation above, $\o_4$ and $\o_5$ denote the area of the unit sphere in $\Rb^4$ and $\Rb^5$, respectively. 
For $t > 0$, making the change of variable, $u=\sqrt{1+t^2 -2 st}$, we get, 
\begin{align*}
g(t)&= \frac{\o_4}{t\o_5}\int\limits_{|1-t|}^{1} f(u) u \lb 1-\lb \frac{1+t^2-u^2}{2t}\rb^2\rb \D u\\
&= \frac{\o_4}{4t^{3}\o_5}\int\limits_{|1-t|}^{1} f(u) u \lb 4t^2-\lb 1+t^2-u^2\rb^2\rb \D u.
\end{align*}

Let us denote 
\[
h(t)=t^{3}g(t) \mbox{ and } C=\frac{\o_4}{4 \o_5}.
\]
Then 
\begin{align}
\notag h(t)& =C\int\limits_{|1-t|}^{1} f(u) u \lb 4t^2-(1+t^2-u^2)^2\rb \D u\\
\notag &= C\int\limits_{|1-t|}^{1} f(u) u\lb (1+t)^2-u^2\rb (u^2-(t-1)^2) \D u.
\end{align}
We let $0<t<1$. Then 
\begin{align}
  \label{9.7}  h(t)=C\int\limits_{1-t}^{1} f(u) u\lb (1+t)^2-u^2\rb (u^2-(t-1)^2) \D u.
\end{align}We replace $t$ by $2-t$ in the above expression. We get, 
\begin{align}\label{9.8}
    h(2-t)=C\int\limits_{1-t}^{1} f(u) u ((3-t)^2-u^2)(u^2-(t-1)^2) \D u.
\end{align}
For simplicity of notation, let us denote 
\[
\A=C\int\limits_{1-t}^{1} f(u) u \D u,\quad \B=C\int\limits_{1-t}^{1} f(u) u^3 \D u, \quad \g=C\int\limits_{1-t}^{1} f(u) u^5 \D u.
\]
In this notation
\begin{align}\label{9.11}
    h(t)=-(t^2-1)^2 \A +2(1+t^2) \B -\g.
\end{align}
\begin{align}\label{9.12}
    h(2-t)=-((3-t)(1-t))^2 \A +2(1+(2-t)^2) \B - \g.
\end{align}
Eliminating $\g$, we have 
\Beq\label{9.12A}
h(t)+(t^2-1)^2\A -2(1+t^2)\B= h(2-t)+((3-t)(1-t))^2 \A -2(1+(2-t)^2) \B.
\Eeq
Therefore, from \eqref{9.7} and \eqref{9.8}, it is enough to find expressions for $\A$ and $\B$. Differentiating these expressions, we get, 
\begin{align}\label{9.13}
    h'(t)=-4t(t^2-1)\A +4t \B.
\end{align}
\begin{align}\label{9.14}
    h'(2-t)=4(t-1)(t-2)(t-3) \A -4(t-2)\B.
\end{align}
Note that those terms which involve the derivative of the integral add to $0$.
Solving \eqref{9.13} and \eqref{9.14}, we get, 
\begin{align}\label{EqSect1:15}
    \A = -\frac{(t-2)h'(t) + th'(2-t)}{16 t(t-1)(t-2)}.
\end{align}
\begin{align}\label{EqSect1:16}
    \B=-\frac{(t-2)(t-3)h'(t) + t(t+1)h'(2-t)}{16t(t-2)}.
\end{align}
Substituting this back into \eqref{9.12A}, we then get, 
\begin{align}
    h(t) + \frac{(1-t)}{t} h'(t)=h(2-t) + \frac{(1-t)}{(t-2)} h'(2-t) \mbox{ for all } t\in(0,1).
\end{align}
In the notation of $D$ operator, we then get, 
\begin{align}
    h(t)+(1-t)[Dh](t)=h(2-t)-(1-t)[Dh](2-t) \mbox{ for all } t\in (0,1).
\end{align}
By continuity, we also have 
\begin{align}
    h(t)+(1-t)[Dh](t)=h(2-t)-(1-t)[Dh](2-t) \mbox{ for all } t\in [0,1].
\end{align}
This can be rewritten in the final form as: 
\begin{align}\label{EqSect1:20}
    h(t)-h(2-t) + (1-t)\lb [Dh](t) +[Dh](2-t)\rb =0 \mbox{ for all } t\in [0,1].
\end{align}
Note that due to the smoothness condition on $h$, the expression above is well-defined for $t=1$ as well.

\subsubsection{Proof of necessity in Theorem \ref{range}}
Our goal next is to generalize the above approach for odd dimensional spherical Radon transform set-up. The strategy, as in this specific example, is to eliminate integral expressions involving $f$. We also make the following observations: 
\begin{itemize}
    \item In the general odd dimensional set-up, we can take up to $k^{\mathrm{th}}$ order $D$ derivatives, where $k=(n-3)/2$, and all such derivatives pass through the integral. In other words, the derivatives of the limits in the integral have no contribution up to the $k^{\mathrm{th}}$ order.
    \item Based on the calculations done for the 5D-case, we consider coefficients of $D$ derivatives as powers of $(1-t)$ multiplied by suitable constants. As in \eqref{EqSect1:20}, these are subtracted when evaluated at $t$ and $(2-t)$ for even order $D$ derivatives and added for odd order $D$ derivatives and set to $0$ to determine the coefficients.
    \end{itemize}
We carry out this program for the general odd dimensional case now. We should mention here that while the computations done for the  5D case serve as a motivation for our approach below, it is very difficult to generalize it to higher dimensional cases, since the solution to the problem relies on the explicit inversion of a matrix. Nevertheless, finding the correct combination of derivatives leads to a positive answer as we show below.  The 3D case is trivial, and the 5D computations done above can be recast as follows: Let us start with the expression for $h(t)$: 
\begin{align*}
    h(t)&= \frac{\o_4}{4\o_5}\int\limits_{|1-t|}^{1} f(u)u \lb 4t^2-(1+t^2-u^2)^{2}\rb \D u\\
    &= \frac{\o_4}{4\o_5}\int\limits_{|1-t|}^{1} f(u)u\lb 2(u^2+1)t^2 -t^4-(1-u^2)^2\rb \D u.
\end{align*}
Let
\[
P(t,u)= 2(u^2+1)t^2 -t^4-(1-u^2)^2.
\]
It is straightforward to check that 
\[
\lb P(t,u)-P(2-t,u)\rb +(1-t)\lb [DP](t,u)+[DP](2-t,u)\rb \equiv 0.
\]
This then gives that 
\[
\lb h(t)-h(2-t)\rb +(1-t)\lb [Dh](t)+[Dh](2-t)\rb \equiv 0.
\]
This is exactly what we derived earlier using a slightly different approach. Nevertheless, this serves as a motivation for what follows.

For the analysis in the general case, we require a few computations and several combinatorial results, which we state and prove below.  

We make the following convention while analyzing combinatorial terms: For $r\geq 0$,  ${r\choose s}=0$ whenever $r<s$ or $s<0$.

\begin{proposition}\label{NecessityProp}
    Let $Q(t,u)=2(u^2+1)t^2-t^4-(1-u^2)^2$ and $P(t,u)=\lb Q(t,u)\rb^{k}$, where $k=\frac{n-3}{2}$. Then for $0\leq p\leq k$,  there exist coefficients $C(k,p)$ with $C(k,k)=1$ such that 
    \[
    \sum\limits_{p=0}^{k} C(k,p) (1-t)^{p}\lb [D^p P](t,u)+(-1)^{p+1}[D^p P](2-t,u)\rb =0 \mbox{ for all }0\leq t\leq 1 \mbox{ and for all }  u \in (0,2).
    \]
    In fact, the coefficients are $C(k,p)=\frac{(2k-p)!}{p!2^{k-p}(k-p)!}$.
\end{proposition}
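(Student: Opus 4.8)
The plan is to reduce the asserted identity, via one well-chosen substitution, to the statement that a finite sum is symmetric in two variables, and then to prove that symmetry by the contour-integral technique of Section~\ref{Egorychev}. First I would factor $Q$: completing the square in $t^2$ gives $Q(t,u)=\bigl((u+1)^2-t^2\bigr)\bigl(t^2-(u-1)^2\bigr)$, so in the variable $z=z(t):=t^2-(u^2+1)$ one has the clean form $Q(t,u)=4u^2-z^2$, and hence $P(t,u)=\psi\bigl(z(t)\bigr)$ with $\psi(x):=(4u^2-x^2)^k$, a polynomial in $x$ of degree exactly $2k$. Since $z(t)$ differs from $t^2$ only by a constant, the operator $D=\frac1t\frac{\D}{\D t}$ acts on functions of $z$ as $2\,\frac{\D}{\D z}$; hence $[D^pP](t,u)=2^p\psi^{(p)}(a)$ and $[D^pP](2-t,u)=2^p\psi^{(p)}(b)$, where $a:=z(t)$ and $b:=z(2-t)$ satisfy $b-a=(2-t)^2-t^2=4(1-t)$.

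Next I would rewrite the claim. Reindexing the definition \eqref{RC2} of $\Lc_k$ as $\Lc_k=\sum_{p=0}^kC(k,p)(1-t)^pD^p$ with $C(k,p)=\frac{(2k-p)!}{p!\,2^{k-p}\,(k-p)!}$ (for which $C(k,k)=1$ is automatic), the Proposition is equivalent to $[\Lc_kP](t,u)=[\Lc_kP](2-t,u)$. Substituting the formulas above and using $1-t=(b-a)/4$, the coefficient $C(k,p)(1-t)^p2^p$ becomes $2^{-k}\frac{(2k-p)!}{p!\,(k-p)!}(b-a)^p$, and on the $2-t$ side becomes $2^{-k}\frac{(2k-p)!}{p!\,(k-p)!}(a-b)^p$. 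Clearing the common factor $2^{-k}$, the Proposition reduces to the symmetry $\Phi(a,b)=\Phi(b,a)$, where
\[
\Phi(a,b):=\sum_{p=0}^{k}\frac{(2k-p)!}{p!\,(k-p)!}\,(b-a)^p\,\psi^{(p)}(a);
\]
since $a,b$ are values of polynomials in $t,u$, it suffices to prove this as an identity in the indeterminates $a,b$.

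I would then establish the purely algebraic fact that \emph{for every polynomial $\psi$ of degree at most $2k$, the expression $\Phi(a,b)$ is symmetric in $a$ and $b$}, which yields the Proposition at once because $\deg\psi=2k$. By linearity it is enough to take $\psi(x)=x^m$ with $0\le m\le 2k$, so that $\psi^{(p)}(a)=\frac{m!}{(m-p)!}a^{m-p}$ and the $p$-sum effectively runs over $0\le p\le\min(k,m)$. Expanding $(b-a)^p$ by the binomial theorem, one finds that the coefficient of $a^{m-j}b^j$ in $\Phi$ equals $\binom{m}{j}k!\sum_{i\ge0}\binom{m-j}{i}\binom{2k-j-i}{k}(-1)^i$. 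Writing $\binom{2k-j-i}{k}=\frac{1}{2\pi\I}\int_{|w|=\ve}\frac{(1+w)^{2k-j-i}}{w^{k+1}}\,\D w$ as in \eqref{Eg1} and summing the series in $i$ collapses this coefficient to $\binom{m}{j}k!\binom{2k-m}{k-m+j}$, which by $\binom{m}{j}=\binom{m}{m-j}$ and $\binom{2k-m}{k-m+j}=\binom{2k-m}{k-j}$ is invariant under $j\mapsto m-j$. Hence $\Phi$ is symmetric, and the proof is complete.

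I expect the main obstacle to be recognizing the substitution $z=t^2-(u^2+1)$ that turns $Q$ into $4u^2-z^2$; it is precisely this observation that collapses the general odd-dimensional computation (and the matrix inversion alluded to in the $5$D motivation) to the one-line polynomial symmetry above. Everything after that is routine: the bookkeeping of powers of $2$ and of signs when passing from the $\Lc_k$-form to $\Phi$, and the evaluation of the $i$-sum, which is of the same type as the manipulations in Lemma~\ref{CombinatorialLemmas}. A more computational alternative, staying closer to the paper's framework, would be to apply the special Fa\`a di Bruno formula (Lemma~\ref{FdB}) directly to $P=Q^k$, using $DQ=4(u^2+1-t^2)$ and $D^2Q=-8$, and then to verify the resulting combinatorial identity with Lemma~\ref{CombinatorialLemmas}; the substitution route seems cleaner.
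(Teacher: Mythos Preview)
Your proof is correct and considerably cleaner than the paper's. The paper does \emph{not} spot the substitution $z=t^2-(u^2+1)$; instead it applies the special Fa\`a di Bruno formula (Lemma~\ref{FdB}) to $P=Q^k$, writes $DQ$ in terms of $Q(t,u)$, $Q(2-t,u)$ and $(1-t)$, and then determines the coefficients $C(k,p)$ inductively by setting, for each $l$, the total $(1-t)^{2(k-l)}$-contribution to zero. Closing the induction requires the two combinatorial identities of Lemma~\ref{CombinatorialLemmas} (a Vandermonde reduction and a contour-integral symmetry). Your route, by contrast, turns $D$ into $2\,\frac{\D}{\D z}$ and the entire proposition into the single polynomial symmetry $\Phi(a,b)=\Phi(b,a)$ for $\deg\psi\le 2k$; the coefficients $C(k,p)$ are then simply verified rather than discovered, and the one remaining combinatorial sum you evaluate (collapsing to $\binom{m}{j}k!\binom{2k-m}{k-m+j}$) is of the same Egorychev flavor but substantially lighter than what the paper needs. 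What the paper's approach buys is a derivation of the $C(k,p)$ from scratch, without guessing them; what yours buys is a short, conceptual proof and a transparent reason for the identity: in the $z$-variable, $\Lc_k$ is a constant-coefficient operator and the claim is a symmetry of a degree-$2k$ Taylor-like expansion. Your remark that the more computational alternative would stay ``closer to the paper's framework'' is exactly right---that alternative \emph{is} the paper's proof.
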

\bpr
We first find an expression for the higher order $D$ derivatives of $P(t,u)$ using the special case of Fa\`a di Bruno formula, Lemma \ref{FdB}.
We observe that  
\[
[DQ](t,u)=4(u^2+1-t^2) = 4 \lb \frac{Q(2-t,u)-Q(t,u)}{8(1-t)} +2(1-t)\rb,\quad [D^2 Q](t,u)=-8,
\]
and 
$[D^{p}Q](t,u)=0$ for $p\geq 3$. Then by Lemma \ref{FdB}, 
\begin{align*}
[D^{p}P](t,u)= \sum\limits_{q\geq p/2}^{p} \frac{k!}{(k-q)!}& \lb Q(t,u)\rb^{k-q}\frac{p!}{(2q-p)! (p-q)! 2^{p-q}}\\
&\times \lb \frac{Q(2-t,u)-Q(t,u)}{2(1-t)} +8(1-t)\rb^{2q-p}(-8)^{p-q}.
\end{align*}
We rewrite this as 
\begin{align}\label{Eq16.5}
    [D^{p}P](t,u)&=\sum\limits_{q\geq p/2}^{p} \frac{K(p,q)}{(1-t)^{2q-p}} Q(t,u)^{k-q}(Q(2-t,u)-Q(t,u) + 16(1-t)^2)^{2q-p},
\end{align}
with 
\[
K(p,q)= \frac{k! p! (-4)^{p-q}}{(k-q)!(2q-p)!(p-q)!2^{2q-p}}.
\]

Since we are only interested in derivatives in the $t$ variable, we will suppress the dependence of $P,Q$ and their derivatives on $u$, and simply write $P(t), Q(t)$, etc. 
Expanding the right-hand side of \eqref{Eq16.5} by the binomial theorem, 
\begin{align*}
    [D^{p} P](t)
    &=\sum\limits_{q\geq p/2}^{p} \sum\limits_{r=0}^{2q-p}\frac{16^r K(p,q)}{(1-t)^{2q-p}}(1-t)^{2r} \binom{2q-p}{r} Q(t)^{k-q} (Q(2-t)-Q(t))^{2q-p-r}.
\end{align*}
Replacing $t$ by $2-t$, 
\begin{align*}
    [D^{p} P](2-t)\NT=\NT\NT\NT\NT\sum\limits_{q\geq p/2}^{p} \sum\limits_{r=0}^{2q-p}\frac{(-1)^{2q-p-r}16^r K(p,q)}{(-1)^{2q-p}(1-t)^{2q-p}}(1-t)^{2r} \binom{2q-p}{r} Q(2-t)^{k-q} (Q(2-t)-Q(t))^{2q-p-r}.
\end{align*}
Therefore, 
\begin{align*}
   (1-t)^{p} \Big{(} [D^{p} P](t)+(-1)^{p+1} & [D^{p} P](2-t)\Big{)}=\sum\limits_{q\geq p/2}^{p} \sum\limits_{r=0}^{2q-p} K(p,q) 16^r (1-t)^{2p-2q+2r} \binom{2q-p}{r} \\
   &\times (Q(2-t)-Q(t))^{2q-p-r}\Bigg{\{} Q(t)^{k-q} +(-1)^{p-r+1} Q(2-t)^{k-q}\Bigg{\}}.
\end{align*}
We want to find coefficients $\{C(k,p)\}$ for $0\leq p\leq k$ with $C(k,k)=1$ such that
\begin{align*}
   \notag &\sum\limits_{p=0}^{k} \sum\limits_{q\geq p/2}^{p} \sum\limits_{r=0}^{2q-p}C(k,p) K(p,q) (-1)^{2q-p-r}16^r (1-t)^{2p-2q+2r}{2q-p\choose r}\\
   \notag \times&(Q(t)-Q(2-t))^{2q-p-r}\Bigg{\{} Q(t)^{k-q} +(-1)^{p-r+1} Q(2-t)^{k-q}\Bigg{\}}\equiv 0.
\end{align*}
Simplifying the constants in the equality above,
\Beq\label{Eq16.16}
\begin{aligned}
    & \sum\limits_{p=0}^{k} \sum\limits_{q\geq p/2}^{p} \sum\limits_{r=0}^{2q-p}C(k,p) \frac{k! p!(-1)^{q-r} 2^{3p-4q+4r}}{(k-q)!(p-q)! r! (2q-p-r)!} (1-t)^{2(p-q+r)}  \\
    &\times (Q(t)-Q(2-t))^{2q-p-r}\Big{\{} Q(t)^{k-q} +(-1)^{p-r+1} Q(2-t)^{k-q}\Big{\}}=0.
\end{aligned}
\Eeq
 Our strategy for determining the coefficients $C(k,p)$ is to set the sum of terms corresponding to a fixed power of $(1-t)$, say  $(1-t)^{2(k-l)}$, equal to $0$ for each $l\geq 0$.

We first claim that a term of the form $(1-t)^{2k}$ does not appear in the above expansion. This can be seen as follows: To obtain the term $(1-t)^{2(p-q+r)}=(1-t)^{2k}$, we must have $p-q+r=k$. 
Since $r \leq 2q-p$, we have $p-q+r \leq p$. Therefore, $(1-t)^{2k}$ does not arise for $p < k$. Hence, we must necessarily have $p=k$.  This then gives $r=q$, which then implies $p\leq q$. But $q\leq p$ always, and so we must have $q=p=k$. This forces $r=k$. Due to the presence of $(-1)^{p-r+1}$ in the term above, we get that $(1-t)^{2k}$ term does not appear in the expansion above. This explains the choice we make for the coefficients $C(k,k)$. We choose this to be $1$. 

We next show the validity of the formula for $C(k,k-1)$.  Toward this, we next show that a term involving $(1-t)^{2k-2}$ appears exactly twice in the term involving $C(k,k)$ and once in the term involving $C(k,k-1)$. First, consider $p=k$. Then we have to consider $p-q+r=k-1$, and since $p=k$, we have $q-r=1$, which then implies that $k-1\leq q$. Hence the two choices of $q$ that are possible are $q=k-1$ and $q=k$. If $q=k-1$, then $r=k-2$, and if $q=k$, then $r=k-1$. If we consider $p=k-1$, then exactly the same argument as in the previous paragraph leads to $r=q=p=k-1$. Hence only one choice is possible. Next let $p=k-2$. We then have $r-q=1$, and following the same arguments as above, we get that $q\geq k-1$, which is impossible since $q\leq p=k-2$. A similar argument follows for all $p<k-2$. Hence we have established that there are exactly three terms. 

Summarizing the contents of the above paragraph, there are exactly 3 terms in the above expansion involving $(1-t)^{2k-2}$. They correspond to the following triples: 
\begin{itemize}
\item $(p,q,r)=(k,k-1,k-2)$, 
\item $(p,q,r)=(k,k,k-1)$,
\item $(p,q,r)=(k-1,k-1,k-1)$. 
\end{itemize} We have to be careful with terms involving the case when $q=k$, since in this case, $Q(t)^{k-q} +(-1)^{p-r+1}Q(2-t)^{k-q}$ is either $0$ or $2$. In the case at hand, it is $2$, since $p-r=1$, and this extra factor of $2$ was taken into account in the second summand in \eqref{Eq:3.20AA} below.  
Setting the sum of terms involving $(1-t)^{2k-2}$ to $0$, we get  
\begin{align}\label{Eq:3.20AA}
    -C(k,k)\Bigg{\{}\frac{4 k!^2  8^{k-2}}{(k-2)!} + k! 8^{k-1} k\Bigg{\}} + C(k,k-1) k! 8^{k-1} =0.
\end{align}
Since $C(k,k)=1$, we find that $C(k,k-1)=k(k+1)/2$. 
 We assume by induction that $C(k,k-s)=\frac{(k+s)!}{(k-s)!2^{s} s!}$ for all $0\leq s\leq l-1$. 
 Our goal is to determine $C(k,k-l)$. Let us assume $l$ is odd; the proof for the even case is similar. For ease of notation, from now on, we let $A=Q(t)$ and $B=Q(2-t)$.

 With $l$ fixed, the maximum possible choices of triples $(p,q,r)$ such that the terms corresponding to these triples give the power $(1-t^2)^{2(k-l)}$ are: 
\begin{enumerate}
    \item[0.] $(k,k-l,k-2l), (k,k-l+1, k-2l+1),\cdots, (k,k,k-l)$
    \item[1.] $(k-1,k-l,k-2l+1), (k-1,k-l+1, k-2l+2),\cdots, (k-1,k-1,k-l)$
    \item[]\hspace{2in} $\vdots$
    \item[$l$.] $(k-l,k-l,k-l)$.
\end{enumerate}
The total number of terms above is $\frac{(l+1)(l+2)}{2}$.

The terms corresponding to the triples from (0) above are: 
\begin{align*}
-C(k,k) (k!)^2 2^{3k-4l}\sum\limits_{s=0}^{l} \frac{(A-B)^{s}\lb A^{l-s}+(-1)^{2l+1-s}B^{l-s}\rb}{((l-s)!)^2 (k-2l+s)!s!}.
\end{align*}
The terms corresponding to the triples $(p,q,r)$ from (1) above are:
\begin{align*}
    C(k,k-1) k!(k-1)!2^{3k-4l+1}\sum\limits_{s=0}^{l-1} \frac{(A-B)^{s}(A^{l-s}+(-1)^{2l-s-1}B^{l-s})}{(l-s)!(l-s-1)! (k-2l+1+s)!s!}.
\end{align*}
The terms corresponding to triples $(p,q,r)$  with $p=k-2$  are: 
\begin{align*}
    -C(k,k-2) k!(k-2)! 2^{3k-4l+2}\sum\limits_{s=0}^{l-2}\frac{(A-B)^{s}(A^{l-s}+(-1)^{2l-s-3}B^{l-s})}{(l-s)!(l-s-2)!(k-2l+2+s)! s!}.
\end{align*}
Continuing in this fashion and summing up all the terms corresponding to $(1-t)^{2(k-l)}$ and setting it to $0$, we have 
\begin{align*}
    k!\sum\limits_{m=0}^{l} (-1)^{l-m} C(k,k-m) (k-m)! 2^{3k-4l+m} \sum\limits_{s=0}^{l-m} \frac{(A-B)^{s}(A^{l-s}-(-1)^{2l-s} B^{l-s})}{(l-s)! (l-s-m)! (k-2l+s+m)! s!}=0.
\end{align*}
We ignore $k!$ and $2^{3k-4l}$ in the above expression from now on. Interchanging the order of summation, we get, 
\begin{align}\label{Eqs3.20}
    \sum\limits_{s=0}^{l}\sum\limits_{m=0}^{l-s} (-1)^{l-m} C(k,k-m) (k-m)! 2^{m} \frac{(A-B)^{s}(A^{l-s}-(-1)^{s} B^{l-s})}{(l-s)! (l-s-m)! (k-2l+s+m)! s!}=0.
\end{align}
Let us split \eqref{Eqs3.20} as 
\begin{align*}
   \notag  &\sum\limits_{s=1}^{l-1}\sum\limits_{m=1}^{l-s}(-1)^{l-m} C(k,k-m) (k-m)! 2^{m} \frac{(A-B)^{s}(A^{l-s}-(-1)^{s} B^{l-s})}{(l-s)! (l-s-m)! (k-2l+s+m)! s!}\\
    \notag &+ \sum\limits_{m=1}^{l-1} \frac{(-1)^{l-m} C(k,k-m) (k-m)! 2^m (A^l-B^l)}{l! (l-m)! (k-2l +m)!}-2(A-B)^{l}{k\choose l}\\
    &- \sum\limits_{s=1}^{l-1} \frac{k!(A-B)^{s}(A^{l-s}-(-1)^{s} B^{l-s})}{((l-s)!)^2(k-2l+s)!s!} + \frac{C(k,k-l) 2^{l}(A^l-B^l)}{l!} - \frac{k!(A^l-B^l)}{(l!)^2(k-2l)!}=0. 
\end{align*}
Using the fact that $C(k,k-m)=\frac{(k+m)!}{(k-m)! 2^{m} m!}$ for $0\leq m<l$, we get, 
\begin{align*}
 &\sum\limits_{s=1}^{l-1}\sum\limits_{m=1}^{l-s}(-1)^{l-m} \frac{(k+m)!(A-B)^{s}(A^{l-s}-(-1)^{s} B^{l-s})}{m!(l-s)!(l-s-m)!(k-2l+s+m)! s!} - \sum\limits_{m=1}^{l-1} (-1)^{m} \frac{(k+m)!(A^l-B^l)}{l! m!(l-m)!(k-2l+m)!}\\
   \notag  &-2(A-B)^{l}{k\choose l} - \sum\limits_{s=1}^{l-1} \frac{k!(A-B)^{s}(A^{l-s}-(-1)^{s} B^{l-s})}{((l-s)!)^2(k-2l+s)!s!} + \frac{C(k,k-l) 2^{l}(A^l-B^l)}{l!} - \frac{k!(A^l-B^l)}{(l!)^2(k-2l)!}=0.
\end{align*}
We can write this as 
\Beq\label{Eqs1.46}
\begin{aligned}
   & -\sum\limits_{s=1}^{l-1} \frac{(2l-s)!(A-B)^{s}(A^{l-s}-(-1)^{s} B^{l-s})}{((l-s)!)^2 s!}\sum\limits_{m=1}^{l-s} (-1)^m {k+m\choose 2l-s}{l-s \choose m}\\
   &-\frac{(2l)!(A^l-B^l)}{(l!)^2}\sum\limits_{m=1}^{l-1}(-1)^m {k+m\choose 2l}{l\choose m}
  -2(A-B)^l{k\choose l}\\
   &-\sum\limits_{s=1}^{l-1}{k\choose 2l-s}{2l-s\choose l}{l\choose s} (A-B)^{s}(A^{l-s}-(-1)^{s} B^{l-s})\\ &  + \frac{C(k,k-l) 2^{l}(A^l-B^l)}{l!} - \frac{k!(A^l-B^l)}{(l!)^2(k-2l)!}=0.
\end{aligned}
\Eeq
 Using (a) from Lemma \ref{CombinatorialLemmas} and noting that $l$ is assumed to be odd, we can simplify the above equality as 
\Beq\label{Eqs1.46AA}
\begin{aligned}
   & \sum\limits_{s=1}^{l-1} \frac{(2l-s)!(A-B)^{s}(A^{l-s}-(-1)^{s} B^{l-s})}{((l-s)!)^2 s!} \lb (-1)^{s}{k\choose l}+{k\choose 2l-s}\rb \\
   &+\frac{(2l)!(A^l-B^l)}{(l!)^2}{k\choose l}+\frac{(2l)!(A^l-B^l)}{(l!)^2}{k\choose 2l}- \frac{(2l)!(A^l-B^l)}{(l!)^2}{k+l\choose 2l}
  -2(A-B)^l{k\choose l}\\
   &-\sum\limits_{s=1}^{l-1}{k\choose 2l-s}{2l-s\choose l}{l\choose s} (A-B)^{s}(A^{l-s}-(-1)^{s} B^{l-s})\\ &  + \frac{C(k,k-l) 2^{l}(A^l-B^l)}{l!} - \frac{k!(A^l-B^l)}{(l!)^2(k-2l)!}=0.
\end{aligned}
\Eeq
We write 
\begin{align*}
    \frac{(2l-s)!}{((l-s)!)^2 s!}= {2l-s \choose l-s}{l\choose s}= {2l-s \choose l}{l\choose s}.
\end{align*}
Using (b) from Lemma \ref{CombinatorialLemmas}, \eqref{Eqs1.46AA} simplifies to (note that we again used the fact that $l$ is odd here) 
\Beq\label{Eqs1.46AAA}
\begin{aligned}
   & 2{k\choose l}(A-B)^l+ \sum\limits_{s=1}^{l-1} {k\choose 2l-s}{2l-s\choose l}{l\choose s } (A-B)^{s}(A^{l-s}-(-1)^{s} B^{l-s})  \\
   &-{k\choose l}{2l\choose l}(A^l-B^l)+{2l\choose l}{k\choose 2l}(A^l-B^l) + {2l\choose l}{k+l\choose 2l}(A^l-B^l) +\frac{(2l)!(A^l-B^l)}{(l!)^2}{k\choose l} \\
  &-2(A-B)^l{k\choose l}-\sum\limits_{s=1}^{l-1}{k\choose 2l-s}{2l-s\choose l}{l\choose s} (A-B)^{s}(A^{l-s}-(-1)^{s} B^{l-s})\\ &  + \frac{C(k,k-l) 2^{l}(A^l-B^l)}{l!} - \frac{k!(A^l-B^l)}{(l!)^2(k-2l)!}=0.
\end{aligned}
\Eeq
After canceling out terms, the above equality simplifies to 
\Beq
\begin{aligned}
    \frac{C(k,k-l) 2^{l}(A^l-B^l)}{l!} - \frac{(2l)!(A^l-B^l)}{(l!)^2}{k+l\choose 2l}=0.
\end{aligned}
\Eeq
This implies 
\[
C(k,k-l)=\frac{(k+l)!}{(k-l)! 2^l l!}.
\]
This completes the induction step. A similar argument can be employed for the case of $l$ even and for this reason we skip the proof. This completes the proof of the proposition.
\epr

\bpr[Proof of necessity in Theorem \ref{range}]  Let $f\in C_c^{\infty}(\Bb)$ in $n$ dimensions be a function depending only on the distance from the origin. That is, 
\[
f(x)= \wt{f}(|x|), \mbox{ for some } \wt{f}:[0,\infty)\to \Rb.
\]
We have $\wt{f}\in C^{\infty}([0,\infty))$. 
As before, we do not distinguish between $f$ and $\wt{f}$. The spherical mean transform of $f$ is 
\begin{align*}
    \Rc f(p,t)&=\frac{1}{\o_n}\int\limits_{\Sb^{n-1}} f(|p+t\theta|) \D S(\theta)\\
    &=\frac{1}{\o_n}\int\limits_{\Sb^{n-1}} f(\sqrt{1+t^2+2tp\cdot \theta})\D S(\theta)\\
    &=\frac{\o_{n-1}}{\o_n}\int\limits_{t/2}^{1} f(\sqrt{1+t^2-2ts})(1-s^2)^{\frac{n-3}{2}} \D s.
\end{align*}
The last equality follows from the Funk-Hecke theorem combined with the fact that the support of $f$ is in the unit ball which forces $\frac{t}{2}\leq -p\cdot \theta\leq 1$. Next, employing the change of variable,
\begin{align*}
    1+t^2-2ts = u^2,
\end{align*}
we have 
\begin{align*}
    h(t):= t^{n-2}\Rc f(p,t)&=\frac{\o_{n-1}}{4^{k}\o_n}\int\limits\limits_{|1-t|}^{1} uf(u)\lb 4t^2-(1+t^2-u^2)^2\rb^{k} \D u\\
    &= \frac{\o_{n-1}}{4^{k}\o_n}\int\limits\limits_{|1-t|}^{1} uf(u)\lb 2(u^2+1)t^2-t^4 -(1-u^2)^2\rb^{k}  \D u\\
    &=\frac{\o_{n-1}}{4^{k}\o_n}\int\limits\limits_{|1-t|}^{1} uf(u)\lb Q(t,u)\rb^{k} \D u,
\end{align*}
where we recall that $k=\frac{n-3}{2}$. 
Note that $h$ is infinitely differentiable in $t$. This is clear for $t\neq 1$. However, for $t=1$, we can argue as follows. We have that $h(t)$ (involving the spherical mean transform of a smooth function) is smooth in the $t$ variable and for $t\neq 1$, the derivatives of $h(t)$ can be computed by chain rule. Hence the derivatives of $h(t)$ at $t=1$ can be evaluated by taking the limit as $t\to 1$ of the corresponding derivatives evaluated at $t\neq 1$. The same remark applies for higher order $D$ derivatives instead of ordinary derivatives. With this remark in mind, we will not distinguish between $t=1$ and $t\neq 1$. 

Note that the integral kernel for $h(t)$ is a polynomial in $t$ and $u$. In order to derive a necessary condition for a function $h(t)\in C_c^{\infty}((0,2))$ to be in the range of SMT,  we apply the $D$ operator $k$ times on $h$ and derive a system of equations eliminating integrals with integrand of the form $f(u) u^{m}$ for certain positive integers $m$.  

Also note that up to order $k$, the derivatives are only evaluated on $P(t,u)$, since $Q(t,1-t)=0$. In other words, the integral and derivatives up to order $k$ can be interchanged.

Applying Proposition \ref{NecessityProp}, we have the following necessary condition: 
\begin{align}\label{Eq16.63}
    \Bigg{\{}\sum\limits_{p=0}^{k}C(k,p)  (1-\cdot)^{p} [D^{p} h(\cdot) ] \Bigg{\}}(t) =\Bigg{\{}\sum\limits_{p=0}^{k}C(k,p) (1-\cdot)^{p} [ D^{p}h(\cdot)]\Bigg{\}}(2-t).
\end{align} 
This completes the proof of necessity in Theorem \ref{range}.
\epr

\subsubsection{Proof of sufficiency in Theorem \ref{range}}

In this subsection, we give the proof of sufficiency in Theorem \ref{range}.  We start with the proof of Theorem \ref{Theorem:3.2-Prelim}. We repeat the statement here for reader's convenience. 

  \emph{Let $h\in C_c^{\infty}((0,2))$ satisfy the following evenness condition: 
    \begin{align}\label{EQ3.28}
    \Bigg{\{}\sum\limits_{p=0}^{k} C(k,p) (1-\cdot)^{p} [D^ph](\cdot)\Bigg{\}}(1-t)=\Bigg{\{}\sum\limits_{p=0}^{k} C(k,p) (1-\cdot)^{p} [D^ph](\cdot)\Bigg{\}}(1+t) \mbox{ for all } t\in [0,1].
    \end{align}
    Then $h$ satisfies the following identity: For $\lambda >0$ and $k \geq 0$: 
    \begin{align}
\lb \int\limits_0^{\infty} j_{k+\frac{1}{2}}(\lambda t) th(t)\D t\rb y_{k+\frac{1}{2}}(\lambda)   =\lb \int\limits_0^{\infty} y_{k+\frac{1}{2}}(\lambda t) th(t) \D t\rb j_{k+\frac{1}{2}}(\lambda).
\end{align}}
\bpr[Proof of Theorem \ref{Theorem:3.2-Prelim}]
We define	
\begin{align}
	\notag H(t)&= \sum\limits_{p=0}^{k}C(k,p) (1-t)^{p}[D^{p}h](t).
\end{align}
We observe the following properties of $H(t)$. 
\begin{enumerate}
	\item $H(t)= H(2-t)$ for $0\leq t\leq 1$,
	\item $H(t)= 0$ for $t>2$.
\end{enumerate}
We claim that the integral
\begin{align}
 I_k=\int\limits_{0}^{\infty} H(t) j_{k+\frac{1}{2}} (\lambda(t-1))(t-1) \D t,
\end{align}
vanishes for all $\lambda>0$.

From (2) above, $H(t)$ has non-trivial support only in $(0,2)$. Using this,
\begin{align*}
    I_{k}&=\int\limits_{0}^{2} H(t) j_{k+\frac{1}{2}} (\lambda(t-1))(t-1) \D t\\
    &=\int\limits_{0}^{1} H(t) j_{k+\frac{1}{2}} (\lambda(t-1))(t-1) \D t+\int\limits_{1}^{2} H(t) j_{k+\frac{1}{2}} (\lambda(t-1))(t-1) \D t.
\end{align*}
Substituting $t$ by $2-t$ in the second integral, noting from the formula in Lemma \ref{Lem6.2} that $j_{k+\frac{1}{2}}(x)$ is an even function in $x$, and using (1) above, we have 
\begin{align*}
	0&= I_k=  \int\limits_{0}^{\infty} \sum\limits_{p=0}^{k}C(k,p) (1-t)^{p}[D^{p}h](t) j_{k+\frac{1}{2}} (\lambda(t-1)) (t-1) \D t\\
	&= \int\limits_{0}^{\infty} \sum\limits_{p=0}^{k}C(k,p) th(t) D^p \lb \frac{(t-1)^{p+1}j_{k+\frac{1}{2}}(\lambda(t-1))}{t}\rb  \D t.
\end{align*}
In the last step, we used repeated integration by parts. Next, substituting $t$ by $-t$, we get
\begin{align*}
I_{k}&= -\int\limits_{-\infty}^{0} \sum\limits_{p=0}^{k}(-1)^pC(k,p) D^p \lb \frac{(t+1)^{p+1}j_{k+\frac{1}{2}}(\lambda(t+1))}{t}\rb th(-t) \D t. 
\end{align*}
From Theorem \ref{Sec6:thm} below, we obtain the following equality. This is a technical result and in order not to disturb the flow of proof, we prefer to give it at the end.  
\begin{align*}
I_k=(-1)^{k+1} \int\limits_{-\infty}^{0}\left\{D^k\left(\frac{\sin (\lambda t)}{t}\right) y_{k+\frac{1}{2}}({\lambda})+D^k\left(\frac{\cos(\lambda t)}{t}\right) j_{k+\frac{1}{2}}({\lambda})\right\} th(-t) \D t .
\end{align*}
Again from the formulas in Lemma \ref{Lem6.2}, we see that 
\begin{align*}
& D^{k}\lb \frac{\sin \lambda (\cdot) }{(\cdot)}\rb (-t)=D^{k}\lb \frac{\sin \lambda (\cdot) }{(\cdot)}\rb (t),\\
& D^{k}\lb \frac{\cos \lambda (\cdot) }{(\cdot)}\rb (-t)=-D^{k}\lb \frac{\cos \lambda (\cdot) }{(\cdot)}\rb (t).
\end{align*}
Letting $t\to -t$ in the integral above, we have 
\begin{align*}
    0=(-1)^{k}\int\limits_{0}^{\infty}\left\{D^k\left(\frac{\sin (\lambda t)}{t}\right) y_{k+\frac{1}{2}}({\lambda})-D^k\left(\frac{\cos(\lambda t)}{t}\right) j_{k+\frac{1}{2}}({\lambda})\right\} th(t) \D t.
\end{align*}
Hence 
\begin{align}\label{special-formula}
  \lb\int\limits_0^{\infty} D^k\left(\frac{\sin (\lambda t)}{t}\right) th(t)\D t\rb y_{k+\frac{1}{2}}(\lambda)=\lb \int\limits_0^{\infty} D^{k}\lb \frac{\cos(\lambda t)}{t}\rb \, th((t) \D t\rb j_{k+\frac{1}{2}}(\lambda).
\end{align}
The above formula \eqref{special-formula} can be rewritten in a more symmetric form as follows. For $\lambda>0$ and $k\geq 0$: 
\begin{align}\label{special-formula11}
\lb \int\limits_0^{\infty} j_{k+\frac{1}{2}}(\lambda t) th(t)\D t\rb y_{k+\frac{1}{2}}(\lambda)   =\lb \int\limits_0^{\infty} y_{k+\frac{1}{2}}(\lambda t) th(t) \D t\rb j_{k+\frac{1}{2}}(\lambda).
\end{align}
\epr

\bpr[Proof of sufficiency part of Theorem \ref{range}]
The sufficiency part of proof of the main theorem follows as a straightforward consequence of \eqref{special-formula11} combined with Theorem~\ref{range-AKQ}. Indeed, for $\lambda>0$, the left hand side of \eqref{special-formula11} is the product of the Hankel transform of $g$ (recall that $h(t)=t^{n-2}g(t)$) and the spherical Bessel function of the second kind. Theorem \ref{Theorem:3.2-Prelim} gives that this factors into a product of two functions, one of them being the spherical Bessel function of the first kind in $\lambda$. Since $j_{k+\frac{1}{2}}(\lambda)$ and $y_{k+\frac{1}{2}}(\lambda)$ have no common zeros \cite[eq.(9.5.2)]{Abramowitz-Stegun}, by Theorem~\ref{range-AKQ}, we have the sufficiency part of Theorem \ref{range}.
\epr
\subsubsection{Proof of a technical result used in Theorem \ref{Theorem:3.2-Prelim}}
The following result was used in the proof of Theorem \ref{Theorem:3.2-Prelim} above. 

We recall from \eqref{Bessel1} and \eqref{Bessel2}, the spherical Bessel functions of the first and second kind, respectively, with $\A=k+\frac{1}{2}$ (modulo constants) and 
    $C(k,p)=\frac{(2k-p)!}{p!(k-p)! 2^{k-p}}$. 

\begin{theorem}\label{Sec6:thm}
     For $\lambda>0$ and $t \neq 0$, define 
    \[
    \begin{aligned}
        M_{k}(\lambda)&=\sum\limits_{p=0}^{k} C(k,p) (-1)^{p} D^{p}\lb \frac{(1+t)^{p+1} j_{k+\frac{1}{2}}(\lambda(1+t))}{t}\rb.
        \end{aligned}
        \]
        
        Then $M_k(\lambda)$ has the following decomposition: 
        \Beq \label{Eqs6.1}
        \begin{aligned}
            M_k(\lambda)=
        (-1)^k \left\{D^k\left(\frac{\sin (\lambda t)}{t}\right) y_{k+\frac{1}{2}}({\lambda})+D^k\left(\frac{\cos(\lambda t)}{t}\right) j_{k+\frac{1}{2}}({\lambda})\right\}.
    \end{aligned}
    \Eeq
\end{theorem}

\bpr
We can assume in what follows that $t\neq -1$. The result for the case $t=-1$ will follow from continuity. 

We have, using the expressions from Lemma \ref{Lem6.2},
\begin{align*}
    (-1)^{k}M_{k}&=\sum\limits_{p=0}^{k}\sum\limits_{l=0}^{k}\frac{(-1)^{p} C(k,p) C(k,l)}{\lambda^{2k+1-l}}D^{p}\Bigg{[}\frac{1}{t(1+t)^{2k-l-p} }\\
    &\times \Bigg{\{}\cos \lambda t \Big{\{} (-1)^{l/2}\lb \frac{(-1)^l+1}{2}\rb \sin \lambda +(-1)^{(l+1)/2} \lb \frac{(-1)^{l+1}+1}{2}\rb \cos \lambda \Big{\}}\\
    &+ \sin \lambda t \Big{\{} (-1)^{l/2}\lb \frac{(-1)^l+1}{2}\rb \cos \lambda -(-1)^{(l+1)/2} \lb \frac{(-1)^{l+1}+1}{2}\rb \sin \lambda \Big{\}}\Bigg{\}}\Bigg{]}.
\end{align*}
For simplicity of notation, we will denote the following: 
\begin{align*}
   & U_{l}=\Big{\{} (-1)^{l/2}\lb \frac{(-1)^l+1}{2}\rb \sin \lambda +(-1)^{(l+1)/2} \lb \frac{(-1)^{l+1}+1}{2}\rb \cos \lambda \Big{\}},\\
   &V_{l}=\Big{\{} (-1)^{l/2}\lb \frac{(-1)^l+1}{2}\rb \cos \lambda -(-1)^{(l+1)/2} \lb \frac{(-1)^{l+1}+1}{2}\rb \sin \lambda \Big{\}}.
\end{align*}
Then we have 
\begin{align*}
    (-1)^{k}M_{k}&=\sum\limits_{p=0}^{k}\sum\limits_{l=0}^{k}\frac{(-1)^{p} C(k,p) C(k,l)}{\lambda^{2k+1-l}}D^{p}\Bigg{\{}\frac{1}{t(1+t)^{2k-l-p} }\lb U_l \cos \lambda t  + V_l\sin \lambda t \rb\Bigg{\}}.
\end{align*}
Using the expression for the derivatives from Lemma \ref{Lem6.2}, and after some rearrangements, we get
\Beq
\begin{aligned}           
    (-1)^kM_{k}&=\sum\limits_{p=0}^{k}\sum\limits_{l=0}^{k}\sum\limits_{r=0}^{p}\frac{C(k,p)C(k,l)C(p,r)r!{2k-l-p+r-1\choose r}}{\lambda^{2k+1-l} t^{2p+1-r}(1+t)^{2k-l-p+r}} \Big{\{}U_l\cos \lambda t  + V_l\sin \lambda t \Big{\}} \\ 
   \notag  &-\sum\limits_{p=1}^{k}\sum\limits_{l=0}^{k}\sum\limits_{m=0}^{p-1}\sum\limits_{r=0}^{m}\sum\limits_{s=0}^{p-m-1}\frac{ C(k,p) C(k,l){p\choose m} C(m,r)C(p-m-1,s)r!{2k-p-l+r-1\choose r} }{\lambda^{2k-l-s}t^{2p-r-s}(1+t)^{2k-l-p+r}} \\
   &\times \Bigg{[} U_l\Big{\{}  (-1)^{1+\frac{s}{2}}\lb \frac{(-1)^{s}+1}{2}\rb \sin \lambda t+  (-1)^{1+\frac{s+1}{2}}\lb\frac{(-1)^{s+1}+1}{2}\rb\cos \lambda t\Big{\}}\\
   \notag &-V_l\Big{\{}  (-1)^{1+\frac{s}{2}}\lb \frac{(-1)^{s}+1}{2}\rb \cos \lambda t-  (-1)^{1+\frac{s+1}{2}}\lb\frac{(-1)^{s+1}+1}{2}\rb\sin \lambda t\Big{\}}\Bigg{]}.
\end{aligned}
\Eeq
Note that in the expression above, we have separated the $m=p$ term. Our motivation for doing so is that we want to use the expressions in Lemma \ref{Lem6.2}. When $m<p$, at least one derivative lands on the $\sin$ or $\cos$ term. We carry out one $D$ derivative and then invoke the expressions from Lemma \ref{Lem6.2} for $p-m-1$ derivatives of $\frac{\sin x}{x}$ and $\frac{\cos x}{x}$.
Interchanging the order of summation in the second summand, we get 
\begin{align}
    \notag (-1)^kM_{k}&=\sum\limits_{p=0}^{k}\sum\limits_{l=0}^{k}\sum\limits_{r=0}^{p}\frac{C(k,p)C(k,l)C(p,r)r!{2k-l-p+r-1\choose r}}{\lambda^{2k+1-l} t^{2p+1-r}(1+t)^{2k-l-p+r}} \Big{\{}U_l\cos \lambda t  + V_l\sin \lambda t \Big{\}} \\ 
   \notag  &-\sum\limits_{l=0}^{k}\sum\limits_{s=0}^{k-1}\sum\limits_{p=s+1}^{k}\sum\limits_{r=0}^{p-1-s}\sum\limits_{m=r}^{p-1-s}\frac{ C(k,p) C(k,l){p\choose m} C(m,r)C(p-m-1,s)r!{2k-p-l+r-1\choose r} }{\lambda^{2k-l-s}t^{2p-r-s}(1+t)^{2k-l-p+r}} \\
   \label{Eq45.22}
   &\times \Bigg{[} U_l\Big{\{}  (-1)^{1+\frac{s}{2}}\lb \frac{(-1)^{s}+1}{2}\rb \sin \lambda t+  (-1)^{1+\frac{s+1}{2}}\lb\frac{(-1)^{s+1}+1}{2}\rb\cos \lambda t\Big{\}}\\
   \notag &-V_l\Big{\{}  (-1)^{1+\frac{s}{2}}\lb \frac{(-1)^{s}+1}{2}\rb \cos \lambda t-  (-1)^{1+\frac{s+1}{2}}\lb\frac{(-1)^{s+1}+1}{2}\rb\sin \lambda t\Big{\}}\Bigg{]}.
\end{align}
Next let us simplify the summation in $m$ in the second summand. We use the identity from Lemma \ref{AAI} in \eqref{Eq45.22}.  Then 
\[
\begin{aligned}
\frac{(-1)^{k} M_{k}\lambda^{2k}(t+1)^{2k}}{k!}&=\sum\limits_{p=0}^{k}\sum\limits_{l=0}^{k}\sum\limits_{r=0}^{p}\frac{C(k,l){2k-p\choose k} {2p-r\choose p}{2k-l-p+r-1\choose r}\lambda^{l-1}(t+1)^{l+p-r}}{ 2^{k-r}t^{2p+1-r}}\\
     &\times \Big{\{}U_l\cos \lambda t + V_l\sin \lambda t \Big{\}} \\ 
    &-\sum\limits_{l=0}^{k}\sum\limits_{s=0}^{k-1}\sum\limits_{p=s+1}^{k}\sum\limits_{r=0}^{p-1-s}\frac{ C(k,l){2k-p\choose k}{2p-s-r-1\choose p}{2k-p-l+r-1\choose r}\lambda^{l+s}(t+1)^{l+p-r}}{t^{2p-r-s}(s+1)!2^{k-1-r-s}} \\
 &\times \Bigg{[} U_l\Big{\{}  (-1)^{1+\frac{s}{2}}\lb \frac{(-1)^{s}+1}{2}\rb \sin \lambda t+  (-1)^{1+\frac{s+1}{2}}\lb\frac{(-1)^{s+1}+1}{2}\rb\cos \lambda t\Big{\}}\\
    &-V_l\Big{\{}  (-1)^{1+\frac{s}{2}}\lb \frac{(-1)^{s}+1}{2}\rb \cos \lambda t-  (-1)^{1+\frac{s+1}{2}}\lb\frac{(-1)^{s+1}+1}{2}\rb\sin \lambda t\Big{\}}\Bigg{]}.
    \end{aligned}
\]
    We note that when $s=-1$, the term within square parantheses in the second summand above is precisely $-\lb \cos \lambda t U_l + \sin \lambda t V_l\rb$, and the remaining terms match with the first summand. Therefore the first summand can be absorbed into the second by adding $s=-1$ term in the second. We get, 
    \begin{align*}
        \frac{(-1)^{k} M_{k}\lambda^{2k}(t+1)^{2k}}{k!}&=-\sum\limits_{l=0}^{k}\sum\limits_{s=-1}^{k-1}\sum\limits_{p=s+1}^{k}\sum\limits_{r=0}^{p-1-s}\frac{ C(k,l){2k-p\choose k}{2p-s-r-1\choose p}{2k-p-l+r-1\choose r}\lambda^{l+s}(t+1)^{l+p-r}}{t^{2p-r-s}(s+1)!2^{k-1-r-s}} \\
 &\times \Bigg{[} U_l\Big{\{}  (-1)^{1+\frac{s}{2}}\lb \frac{(-1)^{s}+1}{2}\rb \sin \lambda t+  (-1)^{1+\frac{s+1}{2}}\lb\frac{(-1)^{s+1}+1}{2}\rb\cos \lambda t\Big{\}}\\
    &-V_l\Big{\{}  (-1)^{1+\frac{s}{2}}\lb \frac{(-1)^{s}+1}{2}\rb \cos \lambda t-  (-1)^{1+\frac{s+1}{2}}\lb\frac{(-1)^{s+1}+1}{2}\rb\sin \lambda t\Big{\}}\Bigg{]}.
    \end{align*}
    Re-indexing in $s$ so that it begins from $s=0$, we then get
    \begin{align*}
        \frac{(-1)^{k} M_{k}\lambda^{2k}(t+1)^{2k}}{k!}&=\sum\limits_{l=0}^{k}\sum\limits_{s=0}^{k}\sum\limits_{p=s}^{k}\sum\limits_{r=0}^{p-s}\frac{ C(k,l){2k-p\choose k}{2p-s-r\choose p}{2k-p-l+r-1\choose r}\lambda^{l+s-1}(t+1)^{l+p-r}}{t^{2p-r-s+1}s!2^{k-r-s}} \\
 &\times \Bigg{[} U_l\Big{\{}  -(-1)^{\frac{s+1}{2}}\lb \frac{(-1)^{s+1}+1}{2}\rb \sin \lambda t+  (-1)^{\frac{s}{2}}\lb\frac{(-1)^{s}+1}{2}\rb\cos \lambda t\Big{\}}\\
    &+V_l\Big{\{}  (-1)^{\frac{s+1}{2}}\lb \frac{(-1)^{s+1}+1}{2}\rb \cos \lambda t+ (-1)^{\frac{s}{2}}\lb\frac{(-1)^{s}+1}{2}\rb\sin \lambda t\Big{\}}\Bigg{]}.
    \end{align*}
   For simplicity, we let 
  \Beq\label{Bls-Bu}
    \begin{aligned}
        B_{l,s}&=U_l\Big{\{}  -(-1)^{\frac{s+1}{2}}\lb \frac{(-1)^{s+1}+1}{2}\rb \sin \lambda t+  (-1)^{\frac{s}{2}}\lb\frac{(-1)^{s}+1}{2}\rb\cos \lambda t\Big{\}}\\
    &+V_l\Big{\{}  (-1)^{\frac{s+1}{2}}\lb \frac{(-1)^{s+1}+1}{2}\rb \cos \lambda t+ (-1)^{\frac{s}{2}}\lb\frac{(-1)^{s}+1}{2}\rb\sin \lambda t\Big{\}},
    \end{aligned}
    \Eeq
    where we recall that 
    \begin{align*}
    & U_{l}=\Big{\{} (-1)^{l/2}\lb \frac{(-1)^l+1}{2}\rb \sin \lambda +(-1)^{(l+1)/2} \lb \frac{(-1)^{l+1}+1}{2}\rb \cos \lambda \Big{\}},\\
   &V_{l}=\Big{\{} (-1)^{l/2}\lb \frac{(-1)^l+1}{2}\rb \cos \lambda -(-1)^{(l+1)/2} \lb \frac{(-1)^{l+1}+1}{2}\rb \sin \lambda \Big{\}}.
\end{align*}
We then have 
\begin{align}\label{Eqs3.38}
        \frac{(-1)^{k} M_{k}\lambda^{2k}(t+1)^{2k}}{k!}&=\sum\limits_{l=0}^{k}\sum\limits_{s=0}^{k}\sum\limits_{p=s}^{k}\sum\limits_{r=0}^{p-s}\frac{ C(k,l){2k-p\choose k}{2p-s-r\choose p}{2k-p-l+r-1\choose r}\lambda^{l+s-1}(t+1)^{l+p-r}}{t^{2p-r-s+1}s!2^{k-r-s}} B_{l,s}.
    \end{align}

Replacing $r$ by $p-s-r$ in \eqref{Eqs3.38}, we get,
\begin{align*}
    \frac{(-2)^k \lambda^{2k+1}(1+t)^{2k}M_{k}}{k!}&=\sum\limits_{l=0}^{k}\sum\limits_{s=0}^{k}\sum\limits_{p=s}^{k}\sum\limits_{r=0}^{p-s} \frac{2^{p-r}{2k-p\choose k} C(k,l){p+{r}\choose p}{2k-s-l-1-{r}\choose p-s-{r}}\lambda^{l+s}(1+t)^{l+s+{r}}}{t^{p+{r}+1}s!}B_{l,s}.
   \end{align*}
Let us restrict the sum to those $(l,s)$ such that $l+s=u$, where $0\leq u\leq 2k$. It is straightforward to check that $B_{l,s}$ only depends on $l+s$. If $l+s=u$, sometimes we denote $B_{l,s}$ as $B_u$ for convenience.    We call  this restricted sum on the right above as $S=S(u)$.  If $0\leq u\leq k$, then 
\begin{align}\label{Eq45.83A}
    S=S(u)=\frac{(\lambda(1+t))^{u}k! B_{l,s}}{2^{k-u} u!t}\sum\limits_{s=0}^{u}\sum\limits_{p=s}^{k}\sum\limits_{r=0}^{p-s}\frac{2^{p-r-s}{2k-p\choose k} {2k-u+s\choose k-u+s} {u\choose s}{p+r\choose r}{2k-u-1-r\choose p-s-r}(1+t)^r}{t^{p+r}}. 
    \end{align}
    On the other hand, if $k<u\leq 2k$, we have 
    \begin{align}\label{Eq45.83B}
    S=S(u)=\frac{(\lambda(1+t))^{u}k!B_{l,s}}{2^{k-u} u!t}\sum\limits_{s=u-k}^{k}\sum\limits_{p=s}^{k}\sum\limits_{r=0}^{p-s}\frac{2^{p-r-s}{2k-p\choose k} {2k-u+s\choose k-u+s} {u\choose s} {p+r\choose r}{2k-u-1-r\choose p-s-r}(1+t)^r}{t^{p+r}}. 
    \end{align}
    With this, we have
    \begin{align}\label{Eq6.33}
        M_{k}&=\frac{(-1)^k k!}{2^k\lambda^{2k+1}(1+t)^{2k}}\sum\limits_{u=0}^{2k}S(u).
    \end{align}

    In Lemma \ref{L5.4}, we have a simplified expression for $S(u)$ and using this, we now have 
    \begin{align*}
        M_{k}(\lambda)=\frac{(-1)^{k} (k!)^2}{4^{k}(\lambda t)^{2k+1}}\sum\limits_{u=0}^{2k}\sum\limits_{j=0}^{u} \frac{2^{u}\lambda^u}{u!}{u\choose j}{2k-j\choose k} {2k-u+j\choose k} t^{j} B_{u}.
    \end{align*}
    
    To conclude the proof of Theorem \ref{Sec6:thm}, let us expand the right-hand side of \eqref{Eqs6.1}. We let this  be $\wt{M}_{k}$. That is, 
   \begin{align*}
	\wt{M}_k= (-1)^k \left\{D^k\left(\frac{\sin (\lambda t)}{t}\right) y_{k+\frac{1}{2}}({\lambda})+D^k\left(\frac{\cos(\lambda t)}{t}\right) j_{k+\frac{1}{2}}({\lambda})\right\}.
\end{align*}
Expanding using formulas from Lemma \ref{Lem6.2}, we have 
\[
\begin{aligned}
	 \wt{M}_k &= \frac{(-1)^{k}}{\lambda^{2k+1} t^{2k+1}}\sum_{l=0}^{k} \sum_{m=0}^{k} C(k,l)C(k,m) \lambda^{l+m} t^m \\
	 &\times  \Bigg{\{}\sin\lambda (1+t) (-1)^{\frac{l+m}{2}}\left\{\left(\frac{(-1)^l +1}{2}\right)\left(\frac{(-1)^m+1}{2}\right)+ \left(\frac{(-1)^{l+1} +1}{2}\right)\left(\frac{(-1)^{m+1}+1}{2}\right)\right\}\\
	 &+ \cos\lambda (1+t)(-1)^{\frac{l+m+1}{2}}\left\{ \left(\frac{(-1)^l +1}{2}\right) \left(\frac{(-1)^{m+1}+1}{2}\right)+ \left(\frac{(-1)^{l+1} +1}{2}\right) \left(\frac{(-1)^m+1}{2}\right) \right\}\Bigg{\}}. 
\end{aligned}
\]
Using the expression for $B_{l,s}$ defined earlier, we have 
\begin{align*}
	\wt{M}_k= \frac{(-1)^k}{\lambda^{2k+1} t^{2k+1}}\sum_{l=0}^{k} \sum_{m=0}^{k} C(k,l)C(k,m) \lambda^{l+m} t^m B_{l,m}.
\end{align*}
We now restrict the sum to those $(l,m)$ such that $l+m=u$ with $0\leq u\leq 2k$. Then 
\begin{align*}
    \wt{M}_{k}&=\frac{(-1)^k}{\lambda^{2k+1} t^{2k+1}}\sum\limits_{u=0}^{2k}\lambda^{u} \sum\limits_{m=0}^{u} C(k,u-m)C(k,m)t^{m}B_{u}\\
    &=\frac{(-1)^k (k!)^2}{4^{k} \lambda^{2k+1} t^{2k+1}}\sum\limits_{u=0}^{2k}\sum\limits_{m=0}^{u} \frac{2^u\lambda^u}{u!} \binom{u}{m} \binom{2k-m}{k} \binom{2k-u+m}{k} t^m B_{u}.
\end{align*}
We have shown that $M_k=\wt{M}_k$ and this completes the proof of the theorem. 

\epr 
\subsection{Range characterization for general functions}\label{sec:range-gen}
We now prove the range characterization for a general (not necessarily radial) function by expansion into spherical harmonics. The calculations of the previous proof are going to be crucially used.
\bpr[Proof of Theorem \ref{Thm1.4}]

Following the calculations done in \cite{Salman_Article}, we have the following: 
\begin{align}
   \notag g_{m,l}(t)& = \frac{\o_{n-1}}{4^{\frac{n-3}{2}} t^{n-2} \o_{n} C_{m}^{\frac{n-2}{2}}(1)}\int\limits_{|1-t|}^{1} u f_{m,l}(u)C_{m}^{\frac{n-2}{2}}\lb \frac{1+u^2-t^2}{2u}\rb\Big{\{}\lb (1+t)^2-u^2\rb\lb u^2-(1-t)^2\rb\Big{\}}^{\frac{n-3}{2}} \D u\\
    \label{Eq49.2} &=\frac{\o_{n-1}}{ t^{n-2} \o_{n}C_{m}^{\frac{n-2}{2}}(1)}\int\limits_{|1-t|}^{1} u^{n-2} f_{m,l}(u)C_{m}^{\frac{n-2}{2}}\lb \frac{1+u^2-t^2}{2u}\rb\Bigg{\{}1- \frac{\lb 1+u^2-t^2\rb^2}{4u^2}\Bigg{\}}^{\frac{n-3}{2}} \D u.
    \end{align}
    
    We use the following formula for Gegenbauer polynomials: 
    \[
    C_m^{(\A)}(x)=K (1-x^2)^{-\A+\frac{1}{2}}\frac{\D^{m}}{\D x^{m}}\lb 1-x^{2}\rb^{m+\A-\frac{1}{2}},
    \]
    where 
    \[
    K=\frac{(-1)^{m} \Gamma(\A + \frac{1}{2})\Gamma(m+2\A)}{2^m m!\Gamma(2\A)\Gamma(m+\A+\frac{1}{2})}.
    \]
    By repeated application of chain rule, we have 
    \Beq\label{Eq49.3}
    C_{m}^{\frac{n-2}{2}}\lb \frac{1+u^2-t^2}{2u}\rb=K\lb 1-\lb \frac{1+u^2-t^2}{2u}\rb^2\rb^{-\frac{n-3}{2}}(-u)^{m} D^{m} \lb 1-\frac{\lb 1+u^2-t^2\rb^2}{4u^2}\rb^{m+\frac{n-3}{2}},
    \Eeq
    where, we recall that $D=\frac{1}{t}\frac{\D}{ \D t }$.
    Substituting \eqref{Eq49.3} into \eqref{Eq49.2}, we get, 
    \begin{align*}
        t^{n-2} g_{m,l}(t)=\frac{K(-1)^{m}\o_{n-1}}{\o_n C_{m}^{\frac{n-2}{2}}(1)}\int\limits_{|1-t|}^{1} u^{m+n-2}f_{m,l}(u)D^{m}\lb 1-\frac{\lb 1+u^2-t^2\rb^2}{4u^2}\rb^{m+\frac{n-3}{2}} \D u.
    \end{align*}
    Noting that $k=\frac{n-3}{2}$ and that $D^{m}$ can be taken outside the integral, we get, 
    \begin{align*}
        t^{n-2} g_{m,l}(t)&=\frac{K(-1)^{m}\o_{n-1}}{4^{m+k}\o_n C_{m}^{\frac{n-2}{2}}(1)}D^{m}\int\limits_{|1-t|}^{1} u^{1-m}f_{m,l}(u)\lb 4u^2-\lb 1+u^2-t^2\rb^2\rb^{m+\frac{n-3}{2}} \D u\\
        &=\frac{K(-1)^{m}\o_{n-1}}{4^{m+k}\o_n C_{m}^{\frac{n-2}{2}}(1)}D^{m}\int\limits_{|1-t|}^{1} u^{1-m}f_{m,l}(u)\lb 2(u^2+1)t^2-t^4-(1-u^2)^2\rb^{m+\frac{n-3}{2}} \D u.
    \end{align*}
    We denote 
    \begin{align*}
    & h_{m,l}(t) = t^{n-2}g_{m,l}(t)\\
    &\phi_{m,l}(t)=\int\limits_{|1-t|}^{1} u^{1-m}f_{m,l}(u)\lb 2(u^2+1)t^2-t^4-(1-u^2)^2\rb^{m+\frac{n-3}{2}} \D u.
    \end{align*}
    Then we have 
    \[
    h_{m,l}(t)=\frac{K(-1)^{m}\o_{n-1}}{4^{m+k}\o_n C_{m}^{\frac{n-2}{2}}(1)}D^{m}\phi_{m,l}(t).
    \]
    We make the following observations: 
    \begin{itemize}
        \item $\phi_{m,l}(t)\in C_c^{\infty}((0,2))$,
        \item $\phi_{m,l}(t)$ satisfies 
        \[
        [\Lc_{m+k}\phi_{m,l}](1-t)= [\Lc_{m+k}\phi_{m,l}](1+t),
        \]
        where, we recall that 
        \[
         \Lc_{m+k} = \sum\limits_{p=0}^{m+k} \frac{ (m+k+p)! }{(m+k-p)! p! 2^p} (1-t)^{m+k-p} D^{m+k-p}, \qquad D = \frac{1}{t} \frac{\D}{\D t}.
        \]
    \end{itemize}
The smoothness in the first point follows from the fact that $g_{m,l}(t)$ is a smooth function and $\phi_{m,l}(t)$ is the solution of a linear ODE with smooth coefficients and with zero initial conditions. The fact that the support is strictly in $(0,2)$ is due to the fact that $f_{m,l}\in C^{\infty}([0,1))$ has support strictly away from $1$.  The second point follows from the necessity part of Theorem \ref{range} by replacing $k$ by $m+k$. Hence we have the following necessary condition: There is a function $\phi_{m,l}\in C_c^{\infty}((0,2))$, such that $h_{m,l}(t)= D^{m}\phi_{m,l}(t)$ and $\phi_{m,l}(t)$ satisfies 
    \[
        [\Lc_{m+k}\phi_{m,l}](1-t)= [\Lc_{m+k}\phi_{m,l}](1+t).
        \]
        We note that for each $0\leq l\leq d_m$, $\phi_{m,l}$ satisfies the same ODE.
        
        Next, we show that this condition is also sufficient. Since $\phi_{m,l}(t)\in C_c^{\infty}((0,2))$ and $\phi_{m,l}(t)$ satisfies
        \[
        [\Lc_{m+k}\phi_{m,l}](1-t)= [\Lc_{m+k}\phi_{m,l}](1+t),
        \]
        we have by the sufficiency part of the proof of Theorem \ref{range} that 
        \begin{align*}
\lb \int\limits_0^{\infty} j_{k+m+\frac{1}{2}}(\lambda t) t\phi_{m,l}(t)\D t\rb y_{k+m+\frac{1}{2}}(\lambda)   =\lb \int\limits_0^{\infty} y_{k+m+\frac{1}{2}}(\lambda t) t\phi_{m,l}(t) \D t\rb j_{k+m+\frac{1}{2}}(\lambda).
\end{align*}
Therefore, we have 
\begin{align*}
\lb \int\limits_0^{\infty} D^{m}j_{k+\frac{1}{2}}(\lambda t) t\phi_{m,l}(t)\D t\rb y_{k+m+\frac{1}{2}}(\lambda)   =\lb \int\limits_0^{\infty} D^{m}y_{k+\frac{1}{2}}(\lambda t) t\phi_{m,l}(t) \D t\rb j_{k+m+\frac{1}{2}}(\lambda).
\end{align*}
Integrating by parts, we get 
\begin{align*}
\lb \int\limits_0^{\infty} j_{k+\frac{1}{2}}(\lambda t) t h_{m,l}(t)\D t\rb y_{k+m+\frac{1}{2}}(\lambda)   =\lb \int\limits_0^{\infty} y_{k+\frac{1}{2}}(\lambda t) t h_{m,l}(t) \D t\rb j_{k+m+\frac{1}{2}}(\lambda).
\end{align*}
We have the same expression for each $0\leq l\leq d_m$ and hence the $m^{\mathrm{th}}$ order spherical harmonic term of the Hankel transform of $g$ defined as the orthogonal projection of the Hankel transform of $g$ onto the subspace of spherical harmonics of degree $m$ vanishes at the non-zero zeros of the spherical Bessel function $j_{k+m+\frac{1}{2}}(\lambda)$ satisfying \cite[Condition 4, Theorem 11]{Agranovsky-Finch-Kuchment-range}. We are done with the general case as well.
\epr

\subsection{Counterexample to UCP}\label{proof-CE}
In this subsection we prove Theorem \ref{NUCP-1} and Corollary \ref{NUCP-2}. In both the cases, we consider functions possessing radial symmetry. The proof presented here uses the range characterization (Theorem \ref{range}). In fact, this approach has been employed before, see for instance \cite[Section VI.4]{Natterer_book} where it was used to show that the interior problem of computed tomography is not uniquely solvable. The second proof, see below, 
directly produces the function $f$ claimed in the theorem. Due to the local nature of the operator, the construction of such an $f$ is relatively easy. However, in case of non-local problems, the approach via the range characterization may be better suited.

\begin{proof}[Proof of Theorem \ref{NUCP-1}]
    Let $g \in C_c^\infty((0,2))$ be a non-trivial function such that $h(t) = t^{n-2} g(t)$ satisfies \eqref{RC}. Let $\alpha > 0$ be such that $\alpha < 1-\epsilon$. Let us choose $g$ such that $\mathrm{supp} \,g \subset (\alpha, 1-\epsilon) \cup (1+\epsilon, 2-\alpha)$ (see Lemma \ref{existence} for existence of such a non-trivial function). By Theorem \ref{range}, there exists a unique non-trivial function $f \in C_c^\infty(\Bb)$ possessing radial symmetry, such that $\Rc f (p,t) = g(t)$ and hence $\Rc f(p, t)= 0$ for all $p \in \Sn$ and $ t \in ( 1-\epsilon, 1+\epsilon)$. This $f$ can be represented by the expressions given in Theorem \ref{inversion-FPR}. Since the value of $f$ at a point $x \in \Bb$ depends only on the values of $\Rc f$ on spheres passing through a neighborhood of $x$, we have $f|_U = 0$. The proof is complete. 
\end{proof}

\begin{remark}
    Since $\Rc f (p,t) = 0$ for $t < \alpha$, one can also conclude that $f(x) = 0$ for $|x|>1-\alpha$, using support-type theorems \cite{Ambartsoumian2018}.
\end{remark}

\begin{proof}[Proof of Corollary \ref{NUCP-2}]
    Let $U$ be open set such that $\overline{U} \subset \Bb$, and define $m \coloneqq \inf_{x \in U} |x|$ and $M \coloneqq \sup_{x \in U} |x| < 1$. Invoking Theorem \ref{NUCP-1} with $\epsilon = M$, there exists a non-trivial radial function $f$ such that $f$ vanishes in $\{|x| < M\}$ and $\Rc f$ vanishes for all $t \in (1-M, 1+M)$, i.e., $\Rc f$ vanishes on all spheres intersecting $\{|x| < M\}$. In particular, $f$ vanishes on $U$ and $\Rc f$ vanishes on all spheres intersecting $U$.
\end{proof}

\begin{remark}
In the case of functions possessing radial symmetry, the above counterexample is optimal in the sense that the function necessarily vanishes on all of $\{|x| < M\}$. This can be seen as follows: Due to radial symmetry, if $f$ vanishes in $U$, it vanishes in the annulus $A_U \coloneqq \{x \in \Bb: m < |x| < M\}$. Similarly, if $\Rc f$ vanishes on all spheres intersecting $U$, it vanishes on all spheres passing through $A_U$. In particular, $\Rc f$ vanishes on all spheres passing through $\{|x| < M\}$. The local nature of the inversion formula implies that $f$ vanishes on $\{|x| < M\}$.
\end{remark}

The counterexamples to unique continuation given above rely on the existence of a non-trivial function satisfying the range condition, and having appropriate support. We caution the reader that merely extending a compactly supported function in $(0,1)$ to $(0,2)$ by using the range condition does not ensure that the extended function has the desired support. This poses some technical difficulty. We prove the existence of such a function using basic theory of linear ordinary differential equations with variable coefficients. 

\begin{lemma}\label{existence}
    Let $\epsilon \in (0,1)$ and $\alpha > 0$ such that $\alpha < 1-\epsilon$. There exists a non-trivial function $h \in C_c^\infty((0,2))$ such that $\mathrm{supp} \, h \subset (\alpha, 1-\epsilon) \cup (1+\epsilon, 2-\alpha)$ and satisfies
    \[
    [\Lc_{k} h](1-t) = [\Lc_{k} h](1+t) \quad \mbox{for all} \quad t \in (0,1).
    \]
\end{lemma}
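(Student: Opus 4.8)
The plan is to construct such an $h$ by hand, exploiting the observation that $\Lc_k$, after division by its leading coefficient $(1-t)^k/t^k$ (which is smooth and nowhere zero on each of the two open intervals $(0,1)$ and $(1,2)$), is a \emph{regular} linear ODE operator of order $k$ on each of them; the only bad point is $t=1$, which is excluded. Thus the symmetry relation should be read as: prescribe $h$ freely on $(0,1)$, and then it \emph{forces} $h$ on $(1,2)$ through the equation $[\Lc_k h](1+t)=[\Lc_k h](1-t)$.

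Concretely, I would fix an arbitrary $\psi\in C_c^\infty((\alpha,1-\epsilon))$ with $\mathrm{supp}\,\psi\subset[a,b]$ and $\alpha<a\le b<1-\epsilon$, and set $h\coloneqq\psi$ on $(0,1]$. Then $G(s)\coloneqq[\Lc_k\psi](2-s)$ is smooth on $(1,2)$, with $\mathrm{supp}\,G\subset[2-b,2-a]\subset(1+\epsilon,2-\alpha)$. Let $h_+$ be the unique $C^\infty$ solution on $(1,2)$ of $\Lc_k h_+=G$ with vanishing Cauchy data $\bigl(h_+(c),h_+'(c),\dots,h_+^{(k-1)}(c)\bigr)=0$ at some point $c\in(1,2-b)$, and set $h\coloneqq h_+$ on $(1,2)$. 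Since $G\equiv0$ on $(1,2-b)$, uniqueness of solutions of the homogeneous equation gives $h_+\equiv0$ on $(1,2-b)$; as also $\psi\equiv0$ on $(b,1)$, the function $h$ vanishes on the open interval $(b,2-b)\ni1$, so the two pieces patch to an $h\in C^\infty((0,2))$. The symmetry relation is now automatic: for $s\in(0,1)$ we have $[\Lc_k h](s)=[\Lc_k\psi](s)$ since $h\equiv\psi$ near $s$, while for $s\in(1,2)$ we have $[\Lc_k h](s)=[\Lc_k h_+](s)=G(s)$, hence for $t\in(0,1)$
\[ [\Lc_k h](1+t)=G(1+t)=[\Lc_k\psi](1-t)=[\Lc_k h](1-t). \]

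The one requirement that is \emph{not} automatic, and which I expect to be the main obstacle, is that $h$ have support in the prescribed \emph{open} set — equivalently, that $h_+$ vanish near $t=2$ (it already vanishes near $t=1$). On $(2-a,2)$ the forcing $G$ is zero, so there $h_+$ solves the homogeneous equation $\Lc_k h_+=0$; it is identically zero there exactly when its Cauchy data $\bigl(h_+(2-a),h_+'(2-a),\dots,h_+^{(k-1)}(2-a)\bigr)$ vanishes. Each of these $k$ numbers depends linearly on $\psi$: the composite $\psi\mapsto\Lc_k\psi\mapsto G\mapsto h_+\mapsto(\text{Cauchy data at }2-a)$ is a composition of linear maps, the middle one being the solution operator of a linear ODE with zero initial data. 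This produces $k$ linear functionals $\ell_1,\dots,\ell_k$ on the infinite-dimensional space $C_c^\infty((\alpha,1-\epsilon))$; their common kernel has codimension at most $k$, hence is infinite-dimensional, so one can choose $\psi\ne0$ with $\ell_1(\psi)=\dots=\ell_k(\psi)=0$. For this $\psi$, $h_+\equiv0$ on $[2-a,2)$, so $\mathrm{supp}\,h\subset[a,b]\cup[2-b,2-a]\subset(\alpha,1-\epsilon)\cup(1+\epsilon,2-\alpha)$, and $h\in C_c^\infty((0,2))$ is non-trivial because $h|_{(0,1)}=\psi\ne0$. (When $k=0$, so $\Lc_0$ is the identity, no constraints arise and the recipe degenerates to taking $h$ to be the reflection of $\psi$ about $t=1$.)
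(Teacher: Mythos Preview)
Your argument is correct and follows the same overall strategy as the paper's: prescribe $h$ freely on one half-interval and then determine it on the other by solving the $k$th-order linear ODE $\Lc_k h=(\text{known forcing})$, using that the leading coefficient $(1-t)^k/t^k$ of $\Lc_k$ is smooth and nonvanishing away from $t=1$. The paper happens to prescribe on $(1,2)$ and solve on $(0,1)$ while you do the reverse, which is immaterial. The substantive difference is in how the ``far-end'' vanishing is enforced. The paper writes the prescribed piece as a two-parameter combination $c_1\chi_1+c_2\chi_2$ of bump functions and tunes $c_1,c_2$; your linear-algebra observation --- that $\psi\mapsto\bigl(h_+(2-a),\dots,h_+^{(k-1)}(2-a)\bigr)$ is a linear map into $\Rb^k$, hence has infinite-dimensional kernel in $C_c^\infty$ --- is cleaner and dispatches all $k$ conditions at once. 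One small technical point: as written, your evaluation point $2-a$ depends on $\psi$ (through $a=\inf\mathrm{supp}\,\psi$), so the $\ell_j$ are not literally linear functionals on $C_c^\infty((\alpha,1-\epsilon))$. This is easily repaired by either fixing a compact $[a_0,b_0]\subset(\alpha,1-\epsilon)$ once and for all and working in $C_c^\infty((a_0,b_0))$, or by evaluating the Cauchy data at the fixed point $2-\alpha$ (for any $\psi\in C_c^\infty((\alpha,1-\epsilon))$ the forcing $G$ vanishes on a neighborhood of $[2-\alpha,2)$, so zero Cauchy data there still forces $h_+\equiv0$ past the actual support of $G$).
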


\begin{proof}
    Let us first consider $k=0$. In this case, we want a function supported in $(\alpha, 1-\epsilon) \cup (1+\epsilon, 2-\alpha)$ and satisfies
    \[
    h(1-t) = h(1+t) \quad \mbox{ for all } t \in (0,1).
    \]
    This can be easily done by choosing a smooth function supported in $(1+\epsilon, 2-\alpha)$ and then extending it to $(0,1)$ by the relation given above. This idea also works for $k>0$, with some added technical difficulties. 
    
    Let us now assume $k >0$. The range condition can be written as 
    \begin{align}
        &\sum\limits_{l=0}^k \frac{(-1)^{k-l} (k+l)!}{(k-l)! l! 2^l} t^{k-l} \lb \frac{1}{(1-t)} \frac{\D}{\D t} \rb^{k-l} (h(1-t)) \\
        &\qquad \hspace{2cm}= \sum\limits_{l=0}^k \frac{(-1)^{k-l} (k+l)!}{(k-l)! l! 2^l} t^{k-l} \lb \frac{1}{(1+t)} \frac{\D}{\D t} \rb^{k-l} (h(1+t)). \notag
    \end{align} 
    Let $\Tilde{H} \in C_c^\infty((1,2))$ be such that $\mathrm{supp}(\Tilde{H}) \subset (1+\epsilon, 2-\alpha)$ to be chosen later and for $t \in (0,1)$, denote 
    \[
    G(t) = \sum\limits_{l=0}^k \frac{(-1)^{k-l} (k+l)!}{(k-l)! l! 2^l} t^{k-l} \lb \frac{1}{(1+t)} \frac{\D}{\D t} \rb^{k-l} (\Tilde{H}(1+t)).
    \]
    Then $G \in C_c^\infty((0,1))$ and $\mathrm{supp}(G) \subset (\epsilon, 1-\alpha)$. Let us consider the ODE
    \begin{equation}
        \begin{aligned}
            \begin{cases}
                \sum\limits_{l=0}^k \frac{(-1)^{k-l} (k+l)!}{(k-l)! l! 2^l} t^{k-l} \lb \frac{1}{(1-t)} \frac{\D}{\D t} \rb^{k-l} (H(t)) &= G(t) \quad \mbox{for} \quad t \in (\epsilon, 1- \alpha), \\
                \lb H(\epsilon), H^{(1)}(\epsilon), \dots, H^{(k-1)}(\epsilon)\rb &= 0. 
            \end{cases}
        \end{aligned}
    \end{equation}
    The above ODE can be re-written as 
    \begin{equation}
        \begin{aligned}
            \begin{cases}
                \sum\limits_{l=0}^k a_l(t) \lb \frac{\D}{\D t} \rb^l H(t) &= G(t) \quad \mbox{for} \quad t \in (\epsilon, 1- \alpha), \\
                \lb H(\epsilon), H^{(1)}(\epsilon), \dots, H^{(k-1)}(\epsilon)\rb &= 0,
            \end{cases}
        \end{aligned}
    \end{equation}
    where $a_l$ are rational functions of $t$ smooth in the interval $(\epsilon, 1- \alpha)$. Note that 
    \[
    a_k(t) = \frac{(-1)^k t^k}{(1-t)^k},
    \]
    and thus $\frac{1}{a_k}$ is also smooth in $(\epsilon, 1-\alpha)$. Multiplying throughout by $1/a_k$, the ODE becomes 
    \begin{equation}\label{ODE-simple}
        \begin{aligned}
            \begin{cases}
                H^{(k)}(t) + \sum\limits_{l=0}^{k-1} \frac{a_l(t)}{a_k(t)} \lb \frac{\D}{\D t} \rb^l H(t) &= \lb -1 \rb^k \frac{(1-t)^k}{t^k} G(t) \quad \mbox{for} \quad t \in (\epsilon, 1- \alpha), \\
                \lb H(\epsilon), H^{(1)}(\epsilon), \dots, H^{(k-1)}(\epsilon)\rb &= 0.
            \end{cases}
        \end{aligned}
    \end{equation}
    Next we use the representation for the solution to the above ODE, given in \cite[Ch. 3, eq.(6.2)]{Coddington_Book}. If $\varphi_1, \dots, \varphi_k$ is a basis of solutions to the homogeneous equation
    \[
    H^{(k)}(t) + \sum\limits_{l=0}^{k-1} \frac{a_l(t)}{a_k(t)} \lb \frac{\D}{\D t} \rb^l H(t) = 0,
    \]
    then the solution to \eqref{ODE-simple} is given by
    \begin{equation}
        H(t) = \sum\limits_{j=1}^k \varphi_j(t) \int\limits_{\epsilon}^t \frac{W_j(s)}{W(\varphi_1, \dots, \varphi_k)(s)} \lb -1 \rb^k \frac{(1-s)^k}{s^k} G(s) \, \D s,
    \end{equation}
    where $W(\varphi_1, \dots, \varphi_k)$ is the Wronskian of the basis $\varphi_1, \dots, \varphi_k$ and $W_j(s)$ is obtained from $W(\varphi_1, \dots, \varphi_k)$ by replacing the $j-$th column $\lb \varphi_j, \varphi_j^{(1)}, \dots, \varphi_j^{(k-1)} \rb$ by $(0,0, \dots, 1)$ and then taking the determinant. Due to the support restriction of $G$, $H$ vanishes in a small interval to the right of $t = \epsilon$, and hence all its derivatives vanish at $t =\epsilon$. In particular, $H(\epsilon)= H^{(1)}(\epsilon) = \dots = H^{(k-1)}(\epsilon) = 0$. Thus, by uniqueness, this is the solution of the ODE \eqref{ODE-simple}. 

    We also want the function $H$ and all its derivatives to vanish at $t = 1-\alpha$. To this end, recall that 
    \begin{align}
        G(t) &= \sum\limits_{l=0}^k \frac{(-1)^{k-l} (k+l)!}{(k-l)! l! 2^l} t^{k-l} \lb \frac{1}{(1+t)} \frac{\D}{\D t} \rb^{k-l} (\Tilde{H}(1+t)) \\
        &= \sum\limits_{l=0}^k b_l(t) \lb \frac{\D}{\D t} \rb^l (\Tilde{H}(1+t)).
    \end{align}
    The exact expression of the coefficients $b_l$ is not important, but note that these are rational functions of $t$ smooth in the interval $(\epsilon, 1- \alpha)$. Substituting this into the expression for $H$ and performing integration by parts (no boundary terms due to support condition of $\Tilde{H}$), we obtain
    \begin{align}
        H(1-\alpha) &= \int\limits_{\epsilon}^{1-\alpha} \Phi(s) \Tilde{H}(1+s) \, \D s  
    \end{align}
    for some smooth function $\Phi$.

    If $\Phi \equiv 0$, there is nothing to prove. If not, $\exists s_0 \in (\epsilon, 1-\alpha) $ at which $\Phi(s)$ is either positive or negative and hence by continuity, keeps the same sign in a small interval around $s_0$. Let this interval be $I_0$. Let $I_1, I_2 \subset I_0$ be disjoint. Choose two smooth cut-off functions $\chi_1$ and $\chi_2$ supported in $I_1$ and $I_2$ respectively. For $t \in (1,2)$, let us choose 
    \[
    \Tilde{H}(t) = c_1 \chi_1(t-1)+ c_2 \chi_2 (t-1)
    \]
    for $c_1, c_2$ to be chosen later. We then have
    \begin{align*}
        \int\limits_{\epsilon}^{1-\alpha} \Phi(s) \Tilde{H}(1+s) \, \D s &= c_1 \int\limits_{\epsilon}^{1-\alpha} \Phi(s) \chi_1(s) \, \D s + c_2 \int\limits_{\epsilon}^{1-\alpha} \Phi(s) \chi_2(s) \, \D s \\
        &= c_1 \int\limits_{I_1} \Phi(s) \chi_1(s) \, \D s + c_2 \int\limits_{I_2} \Phi(s) \chi_2(s) \, \D s. 
    \end{align*}
    Choosing $c_1 = - \int\limits_{I_2} \Phi(s) \chi_2(s) \, \D s$ and $c_2 = \int\limits_{I_1} \Phi(s) \chi_1(s) \, \D s $, we get
    \begin{align*}
        H(1-\alpha) &= \int\limits_{\epsilon}^{1-\alpha} \Phi(s) \Tilde{H}(1+s) \, \D s \\
        &= 0.
    \end{align*}
    In fact, due to the choice of support of $\Tilde{H}$, $H$ vanishes in a small interval to the left of $t = 1-\alpha$ and hence all its derivatives also vanish at $t=1-\alpha$. Thus, the function $H$, defined in $(\epsilon, 1- \alpha)$, obtained above can be extended by $0$ to a smooth function in $(0,1)$. Finally, the function $h \in C_c^\infty((0,2))$ defined as
    \begin{equation}
        h(t) = \begin{cases}
            H(1-t), \quad \mbox{for} \quad t \in (0,1), \\
            \Tilde{H}(t), \quad \mbox{for} \quad t \in (1,2),
        \end{cases}
    \end{equation}
    satisfies the assumptions of the lemma.
\end{proof}
Finally, we present an alternate proof of Theorem \ref{NUCP-1}. 
\begin{proof}[Proof of Theorem \ref{NUCP-1}]
    Recall that when $f$ has radial symmetry, we have \eqref{radon-radial}:
    \begin{align*}
    \Rc f(p,t) &= \frac{\o_{n-1}}{\o_n} \int\limits_{-1}^1 f \lb \sqrt{1+t^2 +  2st} \rb (1-s^2)^k \, \D s.
\end{align*}
Consider the change of variables $u = \sqrt{1+t^2 +  2st}$ to get
\begin{align*}
    \Rc f(p,t) &= \frac{\o_{n-1}}{\o_n} \frac{1}{t} \int\limits_{|1-t|}^{1+t} u f(u) \lb 1 - \lb \frac{u^2-1-t^2}{2t}\rb^2\rb^k \, \D u.
\end{align*}

\noindent Choose $F \in C_c^\infty ((0,1))$ such that $\mathrm{supp}(F) \subset (\epsilon, 1)$ and take $f(t) = \frac{\D^{m}}{\D t^{m}}F(t)$ for any $m \geq 4k+2$. With this choice of $f$, we have for $t \in (1-\epsilon, 1+\epsilon)$
\begin{align*}
    \Rc f(p,t) &= \frac{\o_{n-1}}{\o_n} \frac{1}{t} \int\limits_{\epsilon}^{1} u \lb \frac{\D^{m}}{\D u^{m}}F(u)\rb \lb 1 - \lb \frac{u^2-1-t^2}{2t}\rb^2\rb^k \, \D u
\end{align*}
due to the choice of support of $F$. Performing repeated integration by parts, we obtain that $\Rc f(p,t) = 0$ for all $p \in \Sn$ and $t \in (1-\epsilon, 1+\epsilon)$. 
\end{proof}

\section{Concluding remarks and further directions}\label{further}
\begin{itemize}
\item In this article, we have given a complete range characterization for the SMT in odd dimensions. A complementary simple range description of the SMT in even dimensions has been obtained in our work \cite{AAKS-even}. As opposed to the range conditions in odd dimensions, which use differential operators, the range conditions in even dimensions are based on symmetry relations that utilize integral operators. This discrepancy is in some sense natural, given the distinct properties of the SMT in spaces with dimensions of different parity. For example, it is well known that SMT in odd dimensions has a local inversion, while in even dimension the inversion of that transform is non-local.

\item One of the results of this paper is a counterexample to UCP for SMT in odd dimensions. The authors believe that the UCP (as introduced in this article) should hold in even dimensions, while the interior problem  (see \cite{Natterer_book}) should not have a unique solution there. The authors plan to address these questions in a future work.

\item An offshoot of the current work is the discovery of explicit inversion formulas for the SMT that we study, similar in spirit to the works of Norton \cite{norton1980reconstruction}, Norton-Linzer \cite{norton1981ultrasonic}, Xu-Wang \cite{xu2002time} and others based on Fourier series/spherical harmonics and Hankel transform. Our inversion formulas are valid in all odd and even dimensions, and are simpler than some of the already existing ones. We plan to report this work in an upcoming article.
\end{itemize}

\section*{Acknowledgements}
GA was partially supported by the NIH grant U01-EB029826.

DA was supported by the Research Council of Finland (Flagship of Advanced Mathematics for Sensing Imaging and Modelling grant 359208 and other grant 360434). 

VK would like to thank the Isaac Newton Institute for Mathematical Sciences, Cambridge, UK, for support and hospitality during the workshop, \emph{Rich and Nonlinear Tomography - a multidisciplinary approach} in 2023 where part of this work was done (supported by EPSRC Grant Number EP/R014604/1). 

All the authors thank Mark Agranovsky, Peter Kuchment, Leonid Kunyansky, Todd Quinto, Rakesh and Boris Rubin for several fruitful discussions while this work was being done.

\appendix
\section{Some combinatorial identities}

\begin{lemma}\label{CombinatorialLemmas}
    We have the following identities: 
    \begin{enumerate} 
   \item[(a)] For any $k,l,s\geq 0$ with $l-s\geq 0$,  \begin{align}\label{Eqs1.57AA}
\sum\limits_{m=0}^{l-s} (-1)^m {k+m\choose 2l-s}{l-s \choose m}&= (-1)^{l-s}{k\choose l}. 
\end{align}

\item[(b)] For any $A$ and $B$ and for any $l\geq 0$,
\[
\sum\limits_{s=0}^{l} (-1)^{s}{2l-s \choose l}{l\choose s} (A-B)^{s}(A^{l-s}-(-1)^{s} B^{l-s})=0.
\]
\end{enumerate} 
\end{lemma}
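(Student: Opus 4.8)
\textbf{Proof proposal for Lemma \ref{CombinatorialLemmas}.}

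The plan is to prove both identities using the Egorychev method outlined in Section \ref{Egorychev}, representing the binomial coefficients as contour integrals, interchanging summation and integration, and summing the resulting geometric-type series in closed form. For part (a), I would first write $\binom{k+m}{2l-s}$ using the representation \eqref{Eg1}, namely $\binom{k+m}{2l-s} = \frac{1}{2\pi\I}\int_{|z|=\ve} \frac{(1+z)^{k+m}}{z^{2l-s+1}}\,\D z$, so that the $m$-dependence is isolated in the factor $(1+z)^m$. Then the sum over $m$ becomes $\sum_{m=0}^{l-s}(-1)^m\binom{l-s}{m}(1+z)^m = (-(1+z))^0 \cdot (1-(1+z))^{l-s} = (-z)^{l-s}$ by the binomial theorem. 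Substituting back, the $z$-integral collapses to $\frac{(-1)^{l-s}}{2\pi\I}\int_{|z|=\ve}\frac{(1+z)^k}{z^{2l-s+1}} z^{l-s}\,\D z = (-1)^{l-s}\cdot\frac{1}{2\pi\I}\int_{|z|=\ve}\frac{(1+z)^k}{z^{l+1}}\,\D z = (-1)^{l-s}\binom{k}{l}$, which is exactly \eqref{Eqs1.57AA}. The only care needed is the sign/degenerate-case bookkeeping when $2l-s < 0$ is impossible here since $l\geq s\geq 0$, and when $l-s=0$ the sum is a single term, consistent with the formula.

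For part (b), I would expand the product $(A-B)^s(A^{l-s}-(-1)^s B^{l-s})$ and reorganize, or — more cleanly — recognize the sum as a coefficient-extraction identity. Writing $\binom{2l-s}{l}$ via \eqref{Eg1} or \eqref{Eg2} and $\binom{l}{s}$ via the standard binomial theorem, one is led to summing $\sum_{s=0}^{l}(-1)^s\binom{l}{s}(A-B)^s[\cdots]$, which after introducing a generating variable should telescope to zero because the two terms $A^{l-s}$ and $-(-1)^s B^{l-s}$ are designed to cancel under the symmetry $A\leftrightarrow B$ combined with the alternating sign. Concretely, I would split the sum into the $A$-part, $S_A = \sum_s (-1)^s\binom{2l-s}{l}\binom{l}{s}(A-B)^s A^{l-s}$, and the $B$-part, $S_B = \sum_s \binom{2l-s}{l}\binom{l}{s}(A-B)^s B^{l-s}$, and show $S_A = S_B$; substituting $z$ for $A-B$ and using $\binom{2l-s}{l} = \frac{1}{2\pi\I}\int \frac{1}{(1-w)^{l+1}w^{l-s+1}}\,\D w$ from \eqref{Eg2}, each sum over $s$ becomes a finite binomial sum in $w$ that evaluates in closed form, and the two closed forms should coincide.

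The main obstacle I anticipate is part (b): unlike (a), where a single substitution produces an immediate collapse, (b) has the subtracted term $A^{l-s} - (-1)^s B^{l-s}$ whose two pieces interact differently with the alternating sign $(-1)^s$, so a naive single contour integral will not symmetrize automatically. I expect the cleanest route is to prove the polynomial identity in the two formal variables $A, B$ directly: set $A = B + z$ and show that $\sum_{s=0}^l (-1)^s \binom{2l-s}{l}\binom{l}{s} z^s\bigl((B+z)^{l-s} - (-1)^s B^{l-s}\bigr)$ vanishes identically as a polynomial in $B$ and $z$, which reduces to checking the vanishing of each coefficient of $B^j$ — and that coefficient is again a single-variable binomial sum amenable to the Egorychev technique (or recognizable as a Vandermonde/Chu convolution). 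An alternative, possibly shorter, is an inductive argument on $l$ or a bijective pairing of terms $s \leftrightarrow l-s$; but I would first attempt the generating-function collapse since it parallels the method already set up in the paper and used for (a).
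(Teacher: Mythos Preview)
Your proposal is correct. For part (a), your Egorychev argument is actually cleaner than the paper's: the paper expands $\binom{k+m}{2l-s}$ via the Vandermonde identity $\binom{k+m}{2l-s}=\sum_j\binom{k}{2l-s-j}\binom{m}{j}$, rewrites $\binom{m}{j}\binom{l-s}{m}=\binom{l-s}{j}\binom{l-s-j}{l-s-m}$, interchanges the sums, and then uses that the inner alternating sum $\sum_m(-1)^m\binom{l-s-j}{l-s-m}$ vanishes unless $j=l-s$. Your single contour integral collapses all of this into the one-line computation $\sum_m(-1)^m\binom{l-s}{m}(1+z)^m=(-z)^{l-s}$, which is more direct and avoids the index gymnastics.

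For part (b), your plan coincides with the paper's: split into $S_A=\sum_s\binom{2l-s}{l}\binom{l}{s}(B-A)^sA^{l-s}$ and $S_B=\sum_s\binom{2l-s}{l}\binom{l}{s}(A-B)^sB^{l-s}$ and show $S_A=S_B$. One practical remark: the paper uses representation \eqref{Eg1}, $\binom{2l-s}{l}=\frac{1}{2\pi\I}\int\frac{(1+z)^{2l-s}}{z^{l+1}}\,\D z$, rather than \eqref{Eg2}. With \eqref{Eg1} the sum over $s$ produces the closed form $\frac{1}{2\pi\I}\int\frac{((B+Az)(1+z))^l}{z^{l+1}}\,\D z=\sum_u\binom{l}{u}^2A^{l-u}B^u$, which is visibly symmetric in $A,B$; with your choice \eqref{Eg2} you land on $\frac{1}{2\pi\I}\int\frac{(A+(B-A)w)^l}{(1-w)^{l+1}w^{l+1}}\,\D w$, and matching $S_A$ with $S_B$ then requires an extra residue-at-infinity or substitution argument. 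Either works, but \eqref{Eg1} is the shorter path here.
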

\begin{proof}

We prove (a). Using Vandermonde identity \cite{Concrete_Mathematics}: 
\[
{k+m \choose 2l-s}=\sum\limits_{j=0}^{m} {k\choose 2l-s-j}{m\choose j}. 
\] 
Then 
\begin{align*}
\sum\limits_{m=0}^{l-s} (-1)^m {k+m\choose 2l-s}{l-s \choose m}& \NT\NT=\NT\NT\sum\limits_{m=0}^{l-s} (-1)^m\sum\limits_{j=0}^{m} {k\choose 2l-s-j}{m\choose j}{l-s \choose m}\\
&\NT\NT=\NT\NT\sum\limits_{m=0}^{l-s} (-1)^m\sum\limits_{j=0}^{m} {k\choose 2l-s-j}{l-s \choose j}{l-s-j\choose l-s-m}. 
\end{align*}
In the last equality, we have used the standard fact: 
\[
{a\choose b}{b\choose c}={a\choose c}{a-c\choose b-c}={a\choose c}{a-c\choose a-b}.
\]
Let us interchange the order of summation. We then get,
\begin{align}
  \label{Eqs1.51}\sum\limits_{m=0}^{l-s} (-1)^m {k+m\choose 2l-s}{l-s \choose m}&= \sum\limits_{j=0}^{l-s}\sum\limits_{m=j}^{l-s}(-1)^m {k\choose 2l-s-j}{l-s \choose j}{l-s-j\choose l-s-m}.
\end{align}

Next, we write
\begin{align*}
    & \sum\limits_{j=0}^{l-s}\sum\limits_{m=j}^{l-s}(-1)^m {k\choose 2l-s-j}{l-s \choose j}{l-s-j\choose l-s-m}\\
    &=\sum\limits_{j=0}^{l-s-1}\sum\limits_{m=j}^{l-s}(-1)^m {k\choose 2l-s-j}{l-s \choose j}{l-s-j\choose l-s-m}+ (-1)^{l-s} {k\choose l}\\
    &=\sum\limits_{j=0}^{l-s-1}{k\choose 2l-s-j}{l-s \choose j}\sum\limits_{m=j}^{l-s}(-1)^m {l-s-j\choose l-s-m}+ (-1)^{l-s} {k\choose l}.
\end{align*}
We have that, as long as $j<l-s$, 
\[
\sum\limits_{m=j}^{l-s}(-1)^m {l-s-j\choose l-s-m}=0.
\]
Hence 
\begin{align*}
    \sum\limits_{m=0}^{l-s} (-1)^m {k+m\choose 2l-s}{l-s \choose m}=(-1)^{l-s} {k\choose l}.
\end{align*}
This completes the proof of (a). 

Next, we prove (b). We first split the left hand side as follows: 
\begin{align*}
\notag \sum\limits_{s=0}^{l} (-1)^{s}{2l-s \choose l}{l\choose s} (A-B)^{s}(A^{l-s}-(-1)^{s} B^{l-s})
&=\sum\limits_{s=0}^{l} {2l-s \choose l}{l\choose s} (B-A)^{s}A^{l-s}\\
&-\sum\limits_{s=0}^{l} {2l-s \choose l}{l\choose s} (A-B)^{s}B^{l-s}.
\end{align*}
Here and in several instances throughout the rest of the paper, we use contour integration technique to evaluate combinatorial sums pioneered by Egorychev \cite{Egorychev}.  From Section \ref{Egorychev}, we can write for $\ve>0$, 
\[
{2l-s \choose l} = \frac{1}{2\pi \I}\int\limits_{|z|=\ve} \frac{(1+z)^{2l-s}}{z^{l+1}}\D z,
\]
and hence 
\begin{align*}
\notag \sum\limits_{s=0}^{l} {2l-s \choose l}{l\choose s} (B-A)^{s}A^{l-s}&=\frac{1}{2\pi \I}\int\limits_{|z|=\ve}\sum\limits_{s=0}^{l} {l\choose s} (B-A)^s A^{l-s} \frac{(1+z)^{2l-s}}{z^{l+1}}\D z\\
\notag &=\frac{1}{2\pi \I}\int\limits_{|z|=\ve}\sum\limits_{s=0}^{l} {l\choose s} \lb\frac{B-A}{1+z}\rb ^s A^{l-s} \frac{(1+z)^{2l}}{z^{l+1}}\D z\\
\notag &=\frac{1}{2\pi \I}\int\limits_{|z|=\ve} \lb A+ \frac{B-A}{1+z}\rb^{l} \frac{(1+z)^{2l}}{z^{l+1}}\D z\\
&=\frac{1}{2\pi \I}\int\limits_{|z|=\ve} \frac{((B+Az)(1+z))^{l}}{z^{l+1}} \D z.
\end{align*}
Expanding $(B+Az)^{l} (1+z)^l$ using binomial theorem, we get,
\begin{align*}
    (B+Az)^{l} (1+z)^l=\sum\limits_{u,v=0}^{l} {l \choose u}{l \choose v}B^{u}A^{l-u}z^{l-u} z^{v}.
\end{align*}
Then 
\begin{align*}
    \frac{(B+Az)^{l} (1+z)^l}{z^{l+1}}=\frac{\sum\limits_{u,v=0}^{l} {l \choose u}{l \choose v}B^{u}A^{l-u}z^{v-u}}{z}.
\end{align*}
Hence 
\begin{align*}
    \frac{1}{2\pi \I}\int\limits_{|z|=\ve} \frac{((B+Az)(1+z))^{l}}{z^{l+1}} \D z = \sum\limits_{u=0}^{l} {l \choose u}^2 B^{u} A^{l-u},
\end{align*}
by Cauchy's theorem combined with the fact that for any negative power of $z$ that is not $-1$, the integral vanishes, since $z^{-p}$ has a primitive in a neighborhood of $|z|=\ve$ for $p\neq 1$. Similarly, 
\begin{align*}
    \sum\limits_{s=0}^{l} {2l-s \choose l}{l\choose s} (A-B)^{s}B^{l-s}=\frac{1}{2\pi \I}\int\limits_{|z|=\ve} \frac{((A+Bz)(1+z))^{l}}{z^{l+1}} \D z.
\end{align*}
Exactly the same argument gives
\begin{align*}
    \frac{1}{2\pi \I}\int\limits_{|z|=\ve} \frac{(A+Bz)^{l} (1+z)^l}{z^{l+1}}\D z= \sum\limits_{u=0}^{l} {l \choose u}^2A^{u} B^{l-u}.
\end{align*}
Since ${l \choose u} = {l \choose l-u}$, these two sums are the same. This concludes the proof of (b). 
\end{proof}

\begin{lemma}\label{Lem6.2} We have 
\begin{align}
    & D^p\left( \frac{\sin x}{x} \right)= \sum_{l=0}^{p} \frac{C(p,l)x^l}{x^{2p+1}} \left\{ \sin x \left(\frac{(-1)^l+1}{2}\right)(-1)^{p+\frac{l}{2}}+ \cos x\left(\frac{(-1)^{l+1}+1}{2}\right)(-1)^{p+\frac{l+1}{2}} \right\}\\
    &D^p\left( \frac{\cos x}{x} \right)= \sum_{l=0}^{p} \frac{C(p,l)x^l}{x^{2p+1}} \left\{ \cos x \left(\frac{(-1)^l+1}{2}\right)(-1)^{p+\frac{l}{2}}- \sin x\left(\frac{(-1)^{l+1}+1}{2}\right)(-1)^{p+\frac{l+1}{2}} \right\}\\
     & \mbox{For } d\geq 0, D^m \lb \frac{1}{t(t+1)^d} \rb = (-1)^m  \sum\limits_{r=0}^{m} \frac{C(m,r) \binom{d+r-1}{r} r!}{t^{2m+1-r} (t+1)^{d+r}},
\mbox{ with the convention that } {-1\choose 0}=1. 
\end{align}
\end{lemma}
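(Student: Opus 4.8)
The plan is to prove all three identities by induction on the order of the operator $D=\tfrac1t\tfrac{\D}{\D t}$, with a single Pascal-type recurrence for the coefficients $C(k,p)=\tfrac{(2k-p)!}{p!\,(k-p)!\,2^{k-p}}$ doing all the real work. First I would record the recurrence
\[
C(m+1,r)=(2m+1-r)\,C(m,r)+C(m,r-1),\qquad 0\le r\le m+1,
\]
valid with the conventions $C(m,-1)=0$ and $C(m,m+1)=0$. This is a one-line factorial computation: bring the two terms on the right to the common denominator $r!\,(m+1-r)!\,2^{m+1-r}$ and use $2(m+1-r)+r=2m+2-r$. Everything below is bookkeeping around this identity.

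For the first two formulas I would treat them at once by passing to the complex exponential. Writing $Q_p(x)=\sum_{l=0}^{p}C(p,l)(-\I x)^{l}$, the claim is
\[
D^{p}\!\lb\frac{e^{\I x}}{x}\rb=\frac{(-1)^{p}\,e^{\I x}}{x^{2p+1}}\,Q_p(x).
\]
The base case $p=0$ is immediate. For the inductive step, apply $\tfrac{\D}{\D t}$ and then $\tfrac1t$ to the right-hand side; one gets $(-1)^{p}e^{\I x}x^{-(2p+3)}\big[\,\I x Q_p-(2p+1)Q_p+xQ_p'\,\big]$, so the step reduces to the polynomial recurrence $Q_{p+1}=(2p+1)Q_p-xQ_p'-\I x Q_p$. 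Reading off the coefficient of $x^l$, and noting that $-\I x Q_p$ has $x^l$-coefficient $C(p,l-1)(-\I)^l$ (because $\I(-\I)^{l-1}=-(-\I)^l$), this coefficient is $(2p+1-l)C(p,l)+C(p,l-1)=C(p+1,l)$ by the recurrence. Finally I would extract real and imaginary parts of $e^{\I x}=\cos x+\I\sin x$: using $(-\I)^l=(-1)^{l/2}$ for $l$ even, $(-\I)^l=(-1)^{(l+1)/2}\I$ for $l$ odd, and $\operatorname{Re}(\I e^{\I x})=-\sin x$, $\operatorname{Im}(\I e^{\I x})=\cos x$, the even-$l$ terms carry $\cos x$ (resp.\ $\sin x$) with sign $(-1)^{p+l/2}$ and the odd-$l$ terms carry $-\sin x$ (resp.\ $\cos x$) with sign $(-1)^{p+(l+1)/2}$; rewriting the even/odd restrictions with the indicators $\tfrac{(-1)^l+1}{2}$ and $\tfrac{(-1)^{l+1}+1}{2}$ gives precisely the two stated expressions for $D^p(\cos x/x)$ and $D^p(\sin x/x)$.

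For the third identity I would again induct on $m$. The base case $m=0$ is the tautology $\tfrac{1}{t(t+1)^{d}}=\tfrac{C(0,0)\binom{d-1}{0}\,0!}{t\,(t+1)^{d}}$, where one uses the stated convention $\binom{-1}{0}=1$ so that the $d=0$ situation is covered as well (all $r\ge1$ terms then vanish since $\binom{r-1}{r}=0$). For the step, apply $D$ to the $r$-th summand: differentiating $t^{-(2m+1-r)}$ feeds the $r'=r$ slot of the $(m+1)$-formula, contributing an extra factor $-(2m+1-r)$, while differentiating $(t+1)^{-(d+r)}$ feeds the $r'=r+1$ slot, contributing an extra factor $-(d+r)$. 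Collecting the coefficient of $t^{-(2m+3-r')}(t+1)^{-(d+r')}$ and using the elementary identity $(d+r'-1)\binom{d+r'-2}{r'-1}(r'-1)!=\binom{d+r'-1}{r'}r'!$ (both sides equal $\tfrac{(d+r'-1)!}{(d-1)!}$) reduces the claim once more to $C(m+1,r')=(2m+1-r')C(m,r')+C(m,r'-1)$.

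I do not expect a genuine obstacle; the content is entirely elementary. The only two places needing care are the parity bookkeeping when splitting the complex formula into real and imaginary parts (keeping $(-1)^{p+l/2}$ versus $(-1)^{p+(l+1)/2}$, and which of $\cos x,\sin x$ is picked up, straight), and the degenerate case $d=0$ of the third formula, where one must check that the conventions $\binom{-1}{0}=1$ and $\binom{r-1}{r}=0$ for $r\ge1$ are consistent with the inductive step — they are, since the recurrence at $r=0$ reads $C(m+1,0)=(2m+1)C(m,0)$, matching $D$ applied to the single surviving term $(-1)^m C(m,0)t^{-(2m+1)}$.
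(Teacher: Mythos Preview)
Your proposal is correct and follows exactly the approach the paper indicates: the paper states only that ``the proofs of these formulas follow in a straightforward manner by induction and will be skipped,'' and you have supplied precisely that induction, organized around the Pascal-type recurrence $C(m+1,r)=(2m+1-r)C(m,r)+C(m,r-1)$. The one embellishment worth noting is your packaging of the first two identities into the single complex statement $D^{p}(e^{\I x}/x)=(-1)^{p}e^{\I x}x^{-(2p+1)}Q_p(x)$ with $Q_p(x)=\sum_l C(p,l)(-\I x)^l$; this is a clean way to avoid doing the parity bookkeeping twice, and the subsequent extraction of real and imaginary parts that you sketch does match the stated formulas term by term.
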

The proofs of these formulas follow in a straightforward manner by induction and will be skipped.
\begin{lemma}\label{AAI}Denote by 
\Beq\label{Eqs2.28} 
C:=\sum\limits_{m=r}^{p-1-s}\frac{1}{p-m}{2m -r \choose m-r}{2(p-1-m)-s\choose p-1-m-s}.
\Eeq
Then 
\[
C=\frac{1}{s+1}{2p-r-s-1\choose p}. 
\]
\end{lemma}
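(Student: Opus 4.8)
The sum

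\[
C=\sum\limits_{m=r}^{p-1-s}\frac{1}{p-m}\binom{2m-r}{m-r}\binom{2(p-1-m)-s}{p-1-m-s}
\]

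is a convolution-type sum in the shift variable $m$, so the natural move is to represent each of the two binomial coefficients as a contour integral. Using \eqref{Eg2}, I would write
\[
\binom{2m-r}{m-r}=\frac{1}{2\pi\I}\int\limits_{|z|=\ve}\frac{\D z}{(1-z)^{m-r+1}\,z^{m+1}},
\qquad
\binom{2(p-1-m)-s}{p-1-m-s}=\frac{1}{2\pi\I}\int\limits_{|w|=\ve}\frac{\D w}{(1-w)^{p-1-m-s+1}\,w^{p-1-m+1}},
\]
where the exponent of $z$ comes from $(2m-r)-(m-r)=m$ and similarly for $w$. After substituting and interchanging sum and integrals, the $m$-dependent part becomes a geometric-type series in $(zw)/((1-z)(1-w))$ weighted by $\frac{1}{p-m}$; the factor $\frac{1}{p-m}$ is the mild obstruction, which I would handle either by introducing an extra integration (writing $\frac{1}{p-m}=\int_0^1 x^{p-m-1}\D x$, or as the coefficient extraction of $\log\frac{1}{1-x}$) or, more cleanly, by noting $\frac{1}{p-m}\binom{2(p-1-m)-s}{p-1-m-s}$ is itself (up to shift) a single binomial coefficient times a Catalan-like factor and re-expressing it directly.

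**Once the $m$-sum is evaluated** the double contour integral will collapse — by Cauchy's theorem, picking off the appropriate residue in $z$ (and then in $w$, or vice versa) exactly as in the proof of Lemma \ref{CombinatorialLemmas}(b) — to a single binomial coefficient. Matching powers: the answer should have top index $2p-r-s-1$ and bottom index $p$, i.e. $\frac{1}{s+1}\binom{2p-r-s-1}{p}$, and the prefactor $\frac{1}{s+1}$ will emerge from a residue of the form $\frac{1}{(1-w)^{s+2}}$ after the $z$-integration is done, integrated against the remaining kernel. An alternative, perhaps cleaner, route: recognize the claimed identity as an instance of a known convolution for ballot/Catalan numbers — the numbers $\frac{1}{n+1}\binom{2n+a}{n}$ satisfy Rothe–Hagen type convolutions — and reduce \eqref{Eqs2.28} to one of those by the substitution $m\mapsto m-r$, $n\mapsto p-1-m-s$, after absorbing $\frac{1}{p-m}$; then cite \cite{Concrete_Mathematics}.

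**The main obstacle** I anticipate is bookkeeping of the shifted indices and the range of summation: one must check that the contour representations remain valid at the endpoints $m=r$ and $m=p-1-s$ (where one of the lower binomial arguments is $0$), and that no spurious residues at $z=1$ or $w=1$ are introduced — the functions $(1-z)^{-N}$ have no pole inside $|z|=\ve$ for small $\ve$, so this should be fine, but it must be stated. I would also verify the identity directly in a couple of small cases (e.g.\ $s=0$, or $r=s=0$) both as a sanity check and because the degenerate cases sometimes need the conventions $\binom{-1}{0}=1$ and $\binom{r}{s}=0$ for $r<s$ spelled out, exactly as flagged before Lemma \ref{CombinatorialLemmas}. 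The whole argument is routine once the correct contour representations are chosen; no genuinely new idea beyond the Egorychev technique already in use is needed.
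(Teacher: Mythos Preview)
Your ``alternative, perhaps cleaner, route'' is in fact exactly what the paper does. The paper's proof does not touch the Egorychev method here at all: it rewrites $\binom{2m-r}{m-r}=\binom{2(m-r)+r}{m-r}$, shifts $m\mapsto m+r$ so the sum runs over $0\le m\le p-1-s-r$, reverses the index, and then uses the elementary identity $\frac{1}{m+s+1}\binom{2m+s}{m}=\frac{1}{2m+s+1}\binom{2m+s+1}{m}$ to put the sum precisely in the form of the Abel--Aigner identity
\[
\sum_{k}\frac{r}{tk+r}\binom{tk+r}{k}\binom{t(n-k)+s}{n-k}=\binom{tn+r+s}{n}
\]
(with $t=2$). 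One application of this identity gives the result directly. So the weight $\frac{1}{p-m}$, which you correctly flag as the obstacle for the contour approach, is not an obstacle at all in the paper's argument: it is absorbed into the $\frac{r}{tk+r}$ prefactor that Abel--Aigner already accommodates.

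Your primary Egorychev plan is not wrong, but it is the harder road. The step where you propose to handle $\frac{1}{p-m}$ by an auxiliary $\int_0^1 x^{p-m-1}\,\D x$ or by a $\log$ coefficient extraction would work, but it introduces a third integration on top of the two contour integrals, and the residue bookkeeping after summing the resulting series is nontrivial (you would effectively be rederiving Abel--Aigner from scratch). If you want to pursue that line you should carry it out in full rather than assert that ``the double contour integral will collapse''; as written, the proposal does not yet contain a proof. The paper's route --- two index shifts and one citation to a named convolution identity --- is both shorter and avoids all the endpoint and residue checks you anticipate.
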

\bpr
This follows directly from the Abel-Aigner identity. For the sake of completeness, we give the proof. The Abel-Aigner identity (see \cite{Aigner,Concrete_Mathematics}) is as follows: 
\begin{align}\label{Eqs2.29}
    \sum\limits_{k} \frac{r}{tk+r}{tk+r \choose k}{t(n-k)+s\choose n-k} ={tn+r+s\choose n}.
\end{align}
We have 
\begin{align*}
    C&=\sum\limits_{m=r}^{p-1-s} \frac{1}{p-m}{2(m-r)+r\choose m-r} {2(p-1-s-m)+s\choose p-1-s-m}\\
    &=\sum\limits_{m=0}^{p-1-s-r}\frac{1}{p-m-r}{2m+r\choose m}{2(p-1-s-r-m)+s\choose p-1-s-r-m}\\
    &=\sum\limits_{m=0}^{p-1-s-r}\frac{1}{s+1+m}{2(p-1-s-r-m)+r\choose p-1-s-r-m}{2m+s\choose m}\\
    &=\sum\limits_{m=0}^{p-1-s-r}\frac{1}{2m+s+1}{2(p-1-s-r-m)+r\choose p-1-s-r-m}{2m+s+1\choose m}.
\end{align*}
In the last but one step, we have replaced the index $m$ by $p-1-s-r-m$ and in the last step, we have used the following equality, 
\begin{align*}
    \frac{1}{m+s+1}{2m+s\choose m}=\frac{1}{2m+s+1}{2m+s+1\choose m}. 
\end{align*}
Now using Abel-Aigner identity \eqref{Eqs2.29}, we get, 
\begin{align*}
    C= \frac{1}{s+1}{2(p-1-s-r)+r+s+1\choose p-1-s-r}=\frac{1}{s+1}{2p-s-r-1\choose p-1-s-r}=\frac{1}{s+1}{2p-s-r-1\choose p}.
\end{align*}
This completes the proof of Lemma \ref{AAI}.
\epr

  \begin{lemma}\label{L5.4} The expressions $S(u)$ for $0\leq u\leq 2k$ in \eqref{Eq45.83A} and \eqref{Eq45.83B} simplify to 
    \Beq \label{S(u) simplified}
    S(u)=\frac{k!(1+t)^{2k}}{2^k t^{2k+1}}\sum\limits_{j=0}^{u} \frac{2^{u}\lambda^u}{u!}{u\choose j}{2k-j\choose k} {2k-u+j\choose k} t^{j} B_{u},
    \Eeq
    where $B_u=B_{l,s}$ is given in \eqref{Bls-Bu}.
    \end{lemma}
    \bpr 
Replacing the index $s$ by $u-s$ in \eqref{Eq45.83A}, we get,  
\begin{align}\label{Eq45.83CC}
    S&=\frac{(\lambda(1+t))^{u}k!B_{l,s}}{2^{k}u!t}\sum\limits_{s=0}^{u}\sum\limits_{p=u-s}^{k}\sum\limits_{r=0}^{p-u+s}\frac{2^{p-r+s}{2k-p\choose k-p}{2k-s\choose k-s}{u\choose s} {p+r\choose r}{2k-u-1-r\choose p-u+s-r}(1+t)^r}{t^{p+r}}.
    \end{align}
    Similarly, we replace the index $s$ by $u-s$ in \eqref{Eq45.83B}. We then get, 
    \begin{align}\label{Eq45.83DD}
    S=\frac{(\lambda(1+t))^{u}k!B_{l,s}}{2^{k} u!t}\sum\limits_{s=u-k}^{k}\sum\limits_{p=u-s}^{k}\sum\limits_{r=0}^{p-u+s}\frac{2^{p-r+s}{2k-p\choose k} {2k-s\choose k-s} {u\choose s} {p+r\choose r}{2k-u-1-r\choose p-u+s-r}(1+t)^r}{t^{p+r}}. 
    \end{align}
   We aim to simplify \eqref{Eq45.83CC} and \eqref{Eq45.83DD} further. This will be achieved in a few steps below. Before we begin, let us make a remark which is important in the computations below. All the combinatorial terms  in \eqref{Eq45.83CC} and \eqref{Eq45.83DD} have non-negative entries except ${2k-u-r-1\choose p-u+s-r}$ and this term attains 
    ${-1\choose 0}$ which according to our convention is $1$. In the calculations below, it is more convenient to work with ${2k-u-1-r\choose 2k-1-p-s}$. For non-negative entries this is obviously the same as ${2k-u-r-1\choose p-u+s-r}$, but since we have to deal with the case ${-1\choose 0}$ as well, we need to interpret this combinatorial term appropriately. Using the relation ${n\choose k}={n\choose n-k}$, we rewrite this as ${-1\choose -1}$.  Using Egorychev's contour integral approach,
    we can interpret ${-1\choose -1}=1$ based on the following: 
    \Beq\label{Contourintegral2}
    {n\choose k}=\frac{1}{2\pi \I}\int\limits_{|z|=\ve} \frac{1}{(1-z)^{k+1} z^{n-k+1}} \D z.
    \Eeq
    For $n=k=-1$, this integral is $1$. In the calculations below, we always interpret the combinatorial term ${2k-u-1-r\choose 2k-1-p-s}$ based on the above contour integral \eqref{Contourintegral2}. 
    We rewrite \eqref{Eq45.83CC} as,  
\begin{align}\label{Eq45.83C}
    S&=\frac{(\lambda(1+t))^{u}k!B_{l,s}}{2^{k}u!t}\sum\limits_{s=0}^{u}\sum\limits_{p=u-s}^{k}\sum\limits_{r=0}^{p-u+s}\frac{2^{p-r+s}{2k-p\choose k-p}{2k-s\choose k-s}{u\choose s} {p+r\choose r}{2k-u-1-r\choose 2k-1-p-s}(1+t)^r}{t^{p+r}}.
    \end{align}
    Similarly, we rewrite \eqref{Eq45.83DD} as 
    \begin{align}\label{Eq45.83D}
    S=\frac{(\lambda(1+t))^{u}k!B_{l,s}}{2^{k} u!t}\sum\limits_{s=u-k}^{k}\sum\limits_{p=u-s}^{k}\sum\limits_{r=0}^{p-u+s}\frac{2^{p-r+s}{2k-p\choose k} {2k-s\choose k-s} {u\choose s} {p+r\choose r}{2k-u-1-r\choose 2k-1-p-s}(1+t)^r}{t^{p+r}}. 
    \end{align}
    
     Next, let us focus our attention on 
\begin{align}
    S_1:=\sum\limits_{p=u-s}^{k}\sum\limits_{r=0}^{p-u+s}\frac{2^{p-r}{2k-p\choose k} {p+r\choose r}{2k-u-1-r\choose 2k-1-p-s}(1+t)^r}{t^{p+r}}.
\end{align}
We write $S_1$ as follows: 
\Beq\label{Eqs6.43}
\begin{aligned}
    S_1=\sum\limits_{p=u-s}^{k}\sum\limits_{r=0}^{p-u+s}\frac{1}{(2\pi\I)^3}\int\limits_{|z|=\ve_1}\int\limits_{|w|=\ve_2}\int\limits_{|v|=\ve_3} & 2^{p-r}\frac{1}{(1-z)^{k+1} z^{k-p+1}}\frac{1}{(1-w)^{p+1}w^{r+1}}\\
    & \times \frac{1}{(1-v)^{2k-p-s} v^{-u+s-r+p+1}}\frac{(1+t)^{r}}{t^{p+r}}\D z \D w \D v,
\end{aligned}
\Eeq
for suitably chosen $\ve_1,\ve_2,\ve_3$ (given below). 

Note that the right-hand side of \eqref{Eqs6.43} vanishes when $r>p-u+s$,  $p>k$ or $p<u-s$. For, when $r>p-u+s$, the integral in $v$ is $0$ by Cauchy's theorem. When $p>k$, the integral in $z$ is $0$ for the same reason, and for $p<u-s$, the integral in $v$ is $0$. Hence in computing the integral in \eqref{Eqs6.43}, we can let the upper limits of $r,p$ to be $\infty$ and the lower limit of $p$ to be $0$. Later on, we will sum in the $s$ variable as well. Note that due to the presence of the combinatorial term ${2k-s\choose k-s}$, we can let the upper limit of $s$ to be $u$ regardless of whether $0\leq u\leq k$ or $k<u\leq 2k$. Furthermore, in the case when $k<u\leq 2k$, see \eqref{Eq45.83D}, we can let the lower limit of $s$ to be $0$ as well, since in \eqref{Eqs6.43}, the integral in $v$ is $0$.

Hence we can unify \eqref{Eq45.83C} and \eqref{Eq45.83D} into a single expression for all $0\leq u\leq 2k$: 
\Beq
S=\frac{(\lambda(1+t))^{u}k!B_{l,s}}{2^{k} u!t}\sum\limits_{s=0}^{u}\sum\limits_{p=0}^{\infty}\sum\limits_{r=0}^{\infty}\frac{2^{p-r+s}{2k-p\choose k} {2k-s\choose k-s} {u\choose s} {p+r\choose r}{2k-u-1-r\choose 2k-1-p-s}(1+t)^r}{t^{p+r}},
\Eeq
and $S_1$ is now rewritten as  
\[
S_1=\sum\limits_{p=0}^{\infty}\sum\limits_{r=0}^{\infty}\frac{2^{p-r}{2k-p\choose k} {p+r\choose r}{2k-u-1-r\choose 2k-1-p-s}(1+t)^r}{t^{p+r}},
\]
interpreted in terms of the contour integral: 

\Beq\label{Eqs6.43A}
\begin{aligned}
    S_1=\sum\limits_{p=0}^{\infty}\sum\limits_{r=0}^{\infty}\frac{1}{(2\pi\I)^3}\int\limits_{|z|=\ve_1}\int\limits_{|w|=\ve_2}\int\limits_{|v|=\ve_3} & 2^{p-r}\frac{1}{(1-z)^{k+1} z^{k-p+1}}\frac{1}{(1-w)^{p+1}w^{r+1}}\\
    & \times \frac{1}{(1-v)^{2k-p-s} v^{-u+s-r+p+1}}\frac{(1+t)^{r}}{t^{p+r}}\D z \D w \D v.
\end{aligned}
\Eeq

We now establish the choice of contours in \eqref{Eqs6.43A}. The contours will be determined based on taking $t$ fixed. Recall that we have $t\neq 0$ in the statement of the theorem. We will assume that $t\neq -1$ as well. Equation \eqref{Eqs6.44} below is obtained by performing summation in $p$ and $r$ variable. In order for the series to converge, we choose contours such that
\[
|v|<\left| \frac{2tw}{1+t}\right| \mbox{ and } \left| \frac{2z(1-v)}{(1-w)vt}\right|<1.
\]
With $t$ arbitrary, but fixed, choose $|w|=\ve_2\ll 1$ and $|v|=\ve_3\ll 1$ both positive so that $\ve_3< \frac{2|t|\ve_2}{|1+t|}$. 
Next, choose $|z|=\ve_1\ll 1$ so that $\frac{2\ve_1(1+\ve_3)}{(1-\ve_2) \ve_3 |t|}<1$. Then 
\[
\left| \frac{2z(1-v)}{(1-w)vt}\right|\leq \frac{2\ve_1(1+\ve_3)}{(1-\ve_2) \ve_3 |t|}<1.
\]
We have 
\begin{align}\label{Eqs6.44}
    S_1=\frac{2t^2}{(2\pi\I)^3}\iiint \frac{1}{(1-z)^{k+1} z^{k+1}}\frac{v^{u-s}}{(1-v)^{2k-s}}\frac{1}{t(1-w)v-2z(1-v)}\frac{1}{2tw-v(1+t)} \D z \D w \D v.
\end{align}
By choosing $|z| =  \ve_1$ small enough, we can make $w= 1- \frac{2z(1-v)}{tv}$ an external pole. Therefore, performing integration in $w$ using residue theorem, we get 
\begin{align*}
    S_1&=\frac{2t}{(2\pi\I)^2}\iint \frac{1}{(1-z)^{k+1} z^{k+1}}\frac{v^{u-s}}{(1-v)^{2k-s}}\frac{1}{2tv-v^2(1+t)-4z(1-v)} \D z \D v\\
    &=-\frac{2t}{(2\pi\I)^2(t+1)}\iint \frac{1}{(1-z)^{k+1} z^{k+1}}\frac{v^{u-s}}{(1-v)^{2k-s}}\frac{1}{v^2-\frac{2tv}{t+1} +\frac{4z}{1+t}(1-v)} \D z \D v\\
    &=-\frac{2t}{(2\pi\I)^2(t+1)}\iint \frac{1}{(1-z)^{k+1} z^{k+1}}\frac{v^{u-s}}{(1-v)^{2k-s}}\frac{1}{\lb v -\frac{t+2z}{t+1}-\frac{\sqrt{t^2+4z^2-4z}}{t+1}\rb}\\
    &\times \frac{1}{\lb v -\frac{t+2z}{t+1}+\frac{\sqrt{t^2+4z^2-4z}}{t+1}\rb} \D z \D v.
\end{align*}
We have that 
\Beq\label{Eqs6.39A}
v=\frac{t+2z}{t+1}-\frac{\sqrt{t^2+4z^2-4z}}{t+1},
\Eeq
is a simple pole. Reducing $\ve_1$ if necessary, we can ensure that this pole is in the interior of $|v|=\ve_3$, since $v$ in \eqref{Eqs6.39A} can be written in the form,
\[
v=\frac{t+2z}{t+1}-\frac{\sqrt{(t+2z)^2-4z(t+1)}}{t+1}.
\]
The other root of $v$ can be made an external pole by choosing $\ve_1$ small enough.
Integrating in $v$, we get, 
\begin{align*}
    S_1=\frac{t (t+1)^{2k-u}}{2\pi\I}\int \frac{1}{(1-z)^{k+1} z^{k+1}}\frac{\lb (t+2z)-\sqrt{t^2+4z^2-4z}\rb^{u-s}}{(1-2z+\sqrt{t^2+4z^2-4z})^{2k-s}}\frac{1}{\sqrt{t^2+4z^2-4z}} \D z.
\end{align*}
As in \cite{Riedel_Notes}, we  make the change of variable $z(1-z)=\eta$, and we have that the image of $|z|=\ve_1$ is a closed contour which makes one complete turn with origin in its interior and which can be deformed to a circle. We have
\[
z=\frac{1-\sqrt{1-4\eta}}{2}.
\]
Then 
\begin{align*}
    S_1=\frac{t (t+1)^{2k-u}}{2\pi\I}\int \frac{1}{\eta^{k+1}}\frac{\lb t+1-\sqrt{1-4\eta}-\sqrt{t^2-4\eta}\rb^{u-s}}{(\sqrt{1-4\eta}+\sqrt{t^2-4\eta})^{2k-s}}\frac{1}{\sqrt{t^2-4\eta}\sqrt{1-4\eta}} \D \eta.
\end{align*}
For simplicity of notation, we let 
\[
\A = \sqrt{1-4\eta},\quad \B=\sqrt{t^2-4\eta}.
\]
Next let us perform summation in $s$ variable. Recall from the earlier discussion that we can let the lower and upper limits of $s$ to be $0$ and $u$, respectively, regardless of whether $0\leq u\leq k$ or $k<u\leq 2k$.  We get, using the integral representation from Subsection \ref{Egorychev},
\begin{align*}
    S_2&:=\sum\limits_{s=0}^{u} 2^{s}{u\choose s} {2k-s \choose k}S_1\\
    &=\frac{t (t+1)^{2k-u}}{(2\pi\I)^2}\iint \frac{1}{(1-w)^{k+1} w^{k+1}\eta^{k+1}}\frac{{\lb t+1-\A -\B +2w(\A + \B)\rb}^{u}}{(\A + \B)^{2k}}\frac{1}{\A \B} \D \eta \D w.
\end{align*}
As before, let us make the change of variable $w(1-w)=\g$. Then we have 
\begin{align*}
    S_2&=\frac{t (t+1)^{2k-u}}{(2\pi\I)^2}\iint \frac{1}{(\g \eta)^{k+1}} \frac{\lb (t+1-(\sqrt{1-4\g}) (\A + \B)\rb^{u}}{(\A + \B)^{2k}}\frac{1}{\A \B} \frac{1}{\sqrt{1-4\g}} \D \eta \D \g\\
    &=\frac{t (t+1)^{2k-u}}{(2\pi\I)^2}\sum\limits_{s=0}^{u}(-1)^{u+s} {u\choose s} \iint \frac{1}{(\g \eta)^{k+1}} \frac{(t+1)^s}{(\A + \B)^{2k-u+s}}\frac{1}{\A \B} \frac{1}{\lb \sqrt{1-4\g}\rb^{1+s-u}} \D \eta \D \g\\
    &=\frac{t (t+1)^{2k-u}}{(2\pi\I)^2}\sum\limits_{s=0}^{u}(-1)^{u+s} {u\choose s} (t+1)^s\int \frac{\lb\sqrt{1-4\g}\rb^{u-s-1}}{\g^{k+1}}\D \g \int \frac{1}{(\A+\B)^{2k-u+s}\eta^{k+1}\A \B}\D \eta.
\end{align*}
Next let us make the change of variable, $\A + \B=\delta$. The image of the $\eta$ curve is a closed contour with $1+t$ in its interior.

We have 
\[
-2\lb \frac{\A+\B}{\A \B}\rb \D \eta  = \D \delta.
\]
Also
\[
\eta = \frac{4\delta^2 t^2 -(\delta^2+t^2-1)^2}{16 \delta^2} = \frac{(1-(\delta-t)^2)((\delta+t)^2-1)}{16 \delta^2}.
\]
Then 
\begin{align*}
    S_2&=-\frac{2^{4k+3}t (t+1)^{2k-u}}{(2\pi\I)^2}\sum\limits_{s=0}^{u}(-1)^{u+s} {u\choose s} (t+1)^s\int \frac{\lb\sqrt{1-4\g}\rb^{u-s-1}}{\g^{k+1}}\D \g \\
    &\times \int \frac{\lb (1-(\delta-t)^2)((\delta+t)^2-1)\rb^{-k-1}}{\delta^{s-1-u}}\D \delta\\
    &=(-1)^{k}\frac{2^{4k+3}t (t+1)^{2k-u}}{(2\pi\I)^2}\sum\limits_{s=0}^{u}(-1)^{u+s} {u\choose s} (t+1)^s\int \frac{\lb\sqrt{1-4\g}\rb^{u-s-1}}{\g^{k+1}}\D \g \\
    \notag &\times \int \frac{\delta^{u+1-s}}{\lb(\delta^2-(t+1)^2)(\delta^2-(t-1)^2)\rb^{k+1}}\D \delta.
\end{align*}
Let us introduce one more change of variable to make the computation easier: 
\[
\delta^2-(t+1)^2=\B.
\]
Then we have 
\begin{align*}
    S_2&=(-1)^{k}\frac{2^{4k+2}t (t+1)^{2k-u}}{(2\pi\I)^2}\sum\limits_{s=0}^{u}(-1)^{u+s} {u\choose s} (t+1)^s\int \frac{\lb\sqrt{1-4\g}\rb^{u-s-1}}{\g^{k+1}}\D \g 
     \int \frac{\lb \B+(t+1)^2\rb^{\frac{u-s}{2}}}{\lb\B(\B+4t)\rb^{k+1}}\D \B.
\end{align*}
Note that the contour in $\B$ variable is a simple closed curve with origin in its interior.
We rewrite (replacing $s$ by $u-s$ in the summation), 
\begin{align*}
    S_2&=(-1)^{k}\frac{2^{4k+2}t (t+1)^{2k}}{(2\pi\I)^2}\sum\limits_{s=0}^{u}(-1)^s {u\choose s} (t+1)^{-s}\int \frac{\lb 1-4\g\rb^{\frac{s-1}{2}}}{\g^{k+1}}\D \g \int \frac{\lb \B+(t+1)^2\rb^{\frac{s}{2}}}{\lb\B(\B+4t)\rb^{k+1}}\D \B.
\end{align*}
We note that only those terms for which $s$ is even survive. 
Therefore we can write $S_2$ as 
\begin{align*}
    S_2&=(-1)^{k}\frac{2^{4k+2}t (t+1)^{2k}}{(2\pi\I)^2}\sum\limits_{s=0, s \mathrm{-even}}^{u} {u\choose s} (t+1)^{-s}\int \frac{\lb 1-4\g\rb^{\frac{s-1}{2}}}{\g^{k+1}}\D \g \int \frac{\lb \B+(t+1)^2\rb^{\frac{s}{2}}}{\lb\B(\B+4t)\rb^{k+1}}\D \B.
\end{align*}
We now assume that $u$ is even. The odd case can be dealt with similarly, and we will not give the proof separately. We have 
\begin{align*}
    S_2&=\frac{(-1)^{k}2^{4k+2}t (t+1)^{2k}}{(2\pi\I)^2}\sum\limits_{m=0}^{u/2}{u \choose 2m}(t+1)^{-2m}\int \frac{\lb 1-4\g\rb^{\frac{2m-1}{2}}}{\g^{k+1}}\D \g  \int \frac{\lb \B+(t+1)^2\rb^{m}}{\lb\B(\B+4t)\rb^{k+1}}\D \B\\
    &=\frac{(-1)^{k}2^{4k+2}t (t+1)^{2k}}{(2\pi\I)^2}\sum\limits_{m=0}^{u/2}{u \choose 2m}(t+1)^{-2m}\int \frac{\lb 1-4\g\rb^{\frac{2m-1}{2}}}{\g^{k+1}}\D \g \int\sum\limits_{q=0}^{m}{m\choose q} \frac{\B^q(t+1)^{2m-2q}}{\lb\B(\B+4t)\rb^{k+1}}\D \B\\
    &=\frac{(-1)^{k}2^{4k+2}t (t+1)^{2k}}{(4t)^{k+1}(2\pi\I)^2}\sum\limits_{m=0}^{u/2}\sum\limits_{q=0}^{m}{u \choose 2m}{m\choose q} (t+1)^{-2q}\int \frac{\lb 1-4\g\rb^{\frac{2m-1}{2}}}{\g^{k+1}}\D \g  \int \frac{1}{\B^{k-q+1} \lb 1+\frac{\B}{4t}\rb^{k+1}}\D \B\\
    &=\frac{(-1)^{k}2^{2k}(t+1)^{2k}}{t^{k}(2\pi\I)^2}\sum\limits_{m=0}^{u/2}\sum\limits_{q=0}^{m}{u \choose 2m}{m\choose q} (t+1)^{-2q}\int \frac{\lb 1-4\g\rb^{\frac{2m-1}{2}}}{\g^{k+1}}\D \g  \int \frac{1}{\B^{k-q+1}} \sum\limits_{p\geq 0} {k+p\choose p} \frac{(-\B)^{p}}{(4t)^{p}} \D \B\\
    &=\frac{(t+1)^{2k-u}}{t^{2k}2\pi\I}\sum\limits_{m=0}^{u/2}\sum\limits_{q=0}^{m}(-1)^q (4t)^{q}(t+1)^{u-2q}{u \choose 2m}{m\choose q}\int \frac{\lb 1-4\g\rb^{\frac{2m-1}{2}}}{\g^{k+1}}\D \g {2k-q\choose k}. 
\end{align*}
We have 
\begin{align*}
   \frac{1}{2\pi \I} \int \frac{\lb 1-4\g\rb^{\frac{2m-1}{2}}}{\g^{k+1}}\D \g=\frac{(-1)^m{2m\choose m} {2k-m\choose k}}{{2k-m\choose m}}.
\end{align*}
Then 
\begin{align*}
    S_2=\frac{(t+1)^{2k-u}}{t^{2k}}\sum\limits_{m=0}^{u/2}\sum\limits_{q=0}^{m}(-1)^{q+m} (4t)^{q}(t+1)^{u-2q}\frac{{u \choose 2m}{m\choose q}{2m\choose m} {2k-m\choose k}{2k-q\choose k}}{{2k-m\choose m}}. 
\end{align*}
Expanding $(t+1)^{u-2q}$, we get, 
\begin{align*}
    S_2=\frac{ (t+1)^{2k-u}}{t^{2k}}\sum\limits_{m=0}^{u/2}\sum\limits_{q=0}^{m}\sum\limits_{r=0}^{u-2q}(-1)^{q+m} (4t)^{q}{u-2q \choose r} t^{r}\frac{{u \choose 2m}{m\choose q}{2m\choose m} {2k-m\choose k}{2k-q\choose k}}{{2k-m\choose m}}.
\end{align*}
We now look at specific coefficients of a fixed power of $t$ inside the summation. With this in mind, let us set $q+r=j$. Note that $0\leq j \leq u$. Then we get the following: The coefficient of $t^j$ in the summation is 
\begin{align*}
    C(j)&:=\sum\limits_{m=0}^{u/2}\sum\limits_{q=0}^{m}\frac{(-1)^{q+m}4^{q}{u-2q\choose j-q}{u \choose 2m}{m\choose q}{2m\choose m} {2k-m\choose k}{2k-q\choose k}}{{2k-m\choose m}}\\
    &=\sum\limits_{q=0}^{u/2} \sum\limits_{m=q}^{u/2}\frac{(-1)^{q+m}4^{q}{u-2q\choose j-q}{u \choose 2m}{m\choose q}{2m\choose m} {2k-m\choose k}{2k-q\choose k}}{{2k-m\choose m}}.
\end{align*}
With this, we have 
\begin{align*}
    S_2=\frac{ (t+1)^{2k-u}}{t^{2k}}\sum\limits_{j=0}^{u} C(j)t^{j}.
\end{align*}
We will simplify the expression for $C(j)$ in the next lemma. Using the simplification, we have the required expression for $S(u)$ given by \eqref{S(u) simplified}. We have introduced terms such as $S, S_1, S_2$ etc. in order to not have to carry over the constants appearing in the summations. For the sake of clarity, we re-emphasize roles of different symbols as follows. We have 
\[
\begin{aligned}
S(u)=S&=\frac{(\lambda(1+t))^{u}k!B_{l,s}}{2^{k} u!t}\sum\limits_{s=0}^{u} 2^{s}{2k-s \choose k-s}{u\choose s} S_1.
\end{aligned}
\]
\[
\begin{aligned}
S_2&=\sum\limits_{s=0}^{u} 2^{s}{2k-s \choose k-s}{u\choose s} S_1\\
&=\frac{ (t+1)^{2k-u}}{t^{2k}}\sum\limits_{j=0}^{u} C(j)t^{j}.
\end{aligned}
\]

\epr
\begin{lemma} The summation $C(j)$ simplifies to 
\[
C(j)=2^{u} {u\choose j} {2k-j \choose k}{2k-u+j \choose k}.
\]
    \end{lemma}
\bpr 
We first make a few straightforward observations about $C(j)$. 
\begin{itemize}
    \item The sum is invariant when $j$ is replaced by $u-j$. Hence it is enough to prove for  $0\leq j\leq u/2$.
    \item The sum is $0$ when $j\geq u+1$.
\end{itemize}
Due to the third combinatorial term, we can replace the lower limit of the summation in $m$ by $0$. We first consider summation in $m$. 
We consider
\begin{align}
    C_1:=\sum\limits_{m=0}^{u/2}\frac{(-1)^{m}{u\choose 2m} {m\choose q} {2m\choose m} {2k-m\choose k}}{{2k-m\choose m}}.
\end{align}
Using $ \frac{{2k-m\choose k}}{{2k-m\choose m}}= \frac{{2k-2m\choose k-m}}{{k\choose m}}$ and simplifying, we have
\begin{align*}
    C_1=\frac{(2k-u)! u!}{k! q! (k-q)!}\sum\limits_{m=0}^{u/2}(-1)^{m} {2k-2m \choose 2k-u}{k-q \choose k-m}.
\end{align*}
Now due to the first combinatorial sum inside the summation, we can replace the upper index of the summation by $k$. Further replacing $k-m$ by $m$, we then get, 
\begin{align}\label{Eqs6.41}
    C_1=\frac{(-1)^k(2k-u)! u!}{k! q! (k-q)!}\sum\limits_{m=0}^{k}(-1)^{m} {2m\choose 2k-u}{k-q \choose m}.
\end{align}
In \eqref{Eqs6.41} above, we can assume the summation in $m$ is till $k-q$. We then get, 
\begin{align*}
   \notag C_1&= \frac{(-1)^k (2k-u)! u!}{k! q! (k-q)!(2\pi\I)}\int \frac{1}{z^{2k-u+1}}(1-(1+z)^2)^{k-q} \D z\\
    \notag &=\frac{(-1)^{q}(2k-u)! u!}{k! q! (k-q)!(2\pi\I)}\int \frac{(z+2)^{k-q}}{z^{k-u+q+1}} \D z \\
    \notag &=\frac{(-1)^q 2^{k-q}(2k-u)! u!}{k! q! (k-q)!(2\pi\I)}\int \sum\limits_{r=0}^{k-q} {k-q \choose r} \frac{z^r}{2^{r} z^{k-u+q+1}} \D z \\
    \notag &= \frac{(-1)^q2^{k-q}(2k-u)! u!}{2^{k-u+q} k! q! (k-q)!} {k-q \choose k-u+q}\\
   &= \frac{(-1)^{q} 2^{u-2q}(2k-u)! u!}{k! q! (k-q)!}{k-q \choose u-2q}.
\end{align*}
With this the summation in $q$ becomes
\begin{align*}
    C(j)&=\frac{ 2^{u}(2k-u)! u!}{(k!)^2}\sum\limits_{q=0}^{u/2} {u-2q \choose j-q}{2k-q \choose k} {k-q \choose u-2q}{k\choose q}\\
    &=\frac{ 2^{u}(2k-u)! u!}{(k!)^2}\sum\limits_{q=0}^{u/2} {u-2q \choose j-q}{2k-q \choose k-q} {k-q \choose u-2q}{k\choose q}\\
    &=\frac{ 2^{u}(2k-u)! u!}{(k!)^2}\sum\limits_{q=0}^{u/2} {u-2q \choose j-q}{2k-q \choose u-2q}{2k-u+q\choose k} {k\choose q}\\
    &=\frac{ 2^{u}(2k-u)! u!}{(k!)^2}\sum\limits_{q=0}^{u/2} {2k-q \choose j-q}{2k-j \choose 2k+q-u}{2k-u+q\choose k} {k\choose q}\\
    &=\frac{ 2^{u}(2k-u)! u!}{(k!)^2}{2k-j\choose k}\sum\limits_{q=0}^{u/2}{2k-q \choose j-q} {k-j \choose u-q-j}{k\choose q}.
\end{align*}
In the equalities above, we have repeatedly used the fact 
\[
{a\choose b}{b\choose c}={a\choose c}{a-c\choose b-c}={a\choose c}{a-c\choose a-b}.
\]
We consider the following summation. Here note that we can let the upper limit of the summation index $q$ to be $k$. This is justified by the fact observed earlier that it is enough to consider $0\leq j\leq u/2$. 
\begin{align*}
    C_2:=\sum\limits_{q=0}^{k} {2k-q \choose j-q}{k-j \choose u-q-j } {k\choose q}.
    \end{align*}
    We have 
    \begin{align*}
        C_2&=\sum\limits_{q=0}^{k}\frac{1}{(2\pi\I)^2}\iint \frac{(1+z)^{2k-q}}{z^{j-q+1}}\frac{(1+w)^{k-j}}{w^{u-q-j+1}} { k\choose q} \D z \D w\\
        &=\frac{1}{(2\pi\I)^2}\iint \frac{(1+z)^{2k}}{z^{j+1}}\frac{(1+w)^{k-j}}{w^{u-j+1}} \lb 1+\frac{zw}{1+z}\rb^{k} \D z \D w\\
        &=\frac{1}{(2\pi\I)^2}\iint \frac{(1+z)^{k}}{z^{j+1}}\frac{(1+w)^{k-j}}{w^{u-j+1}} \lb 1+z(1+w)\rb^{k} \D z \D w\\
        &=\frac{1}{(2\pi\I)^2}\iint \frac{(1+z)^{k}}{z^{j+1}}\frac{(1+w)^{k-j}}{w^{u-j+1}} \sum\limits_{q=0}^{k} {k\choose q} z^{q}(1+w)^{q} \D z \D w\\
        &=\sum\limits_{q=0}^{k}{k\choose q} \frac{1}{(2\pi\I)^2}\iint \frac{(1+z)^{k}}{z^{j-q+1}}\frac{(1+w)^{k+q-j}}{w^{u-j+1}}  \D z \D w\\
        &=\sum\limits_{q=0}^{k} {k\choose q} {k\choose j-q}{k+q-j \choose u-j}\\
        &=\sum\limits_{q=0}^{k} {k\choose q} {k\choose k-j+q}{k+q-j \choose u-j}\\
        &=\sum\limits_{q=0}^{k} {k\choose q} {k\choose u-j}{k-u+j \choose j-q}\\
        &={k\choose u-j} {2k-u+j \choose j}.
    \end{align*}
    Now we have 
    \begin{align*}
        C(j)&=\frac{2^u (2k-u)! u!}{(k!)^2}{2k-j \choose k}{k\choose u-j} {2k-u+j \choose j}\\
        &=\frac{2^u (2k-u)! u!}{(k!)^2} \frac{(2k-j)!}{k!(k-j)!} \frac{k!}{(u-j)!(k-u+j)!}\frac{(2k-u+j)!}{j! (2k-u)!}\\
        &=2^{u} {u\choose j} {2k-j \choose k}{2k-u+j \choose k}.
    \end{align*}
    \epr 
\bibliographystyle{plain}
\bibliography{References}

\end{document}